\numberwithin{equation}{section}
\newtheorem{theo}{Theorem}[section]
\newtheorem{prop}[theo]{Proposition}
\newtheorem{lemm}[theo]{Lemma}
\newtheorem{coro}[theo]{Corollary}
\theoremstyle{definition}
\newtheorem{defi}[theo]{Definition}
\newtheorem{exam}[theo]{Example}
\theoremstyle{remark}
\newtheorem{rema}[theo]{Remark}
\DeclareMathOperator{\N}{\mathbb{N}}
\DeclareMathOperator{\Z}{\mathbb{Z}}
\DeclareMathOperator{\Q}{\mathbb{Q}}
\DeclareMathOperator{\R}{\mathbb{R}}
\DeclareMathOperator{\C}{\mathbb{C}}
\DeclareMathOperator{\T}{\mathbb{T}}
\DeclareMathOperator{\h}{\mathbb{H}}
\DeclareMathOperator{\s}{\mathbb{S}} 
\def\qbinom #1#2{{\genfrac{[}{]}{0pt}{}{#1}{#2}}_{q}}
\def\qbinomchi #1#2{{\genfrac{[}{]}{0pt}{}{#1}{#2}}_{\chi^{-1}}}
\newcommand{\norm}[1]{\left \| #1 \right \| _1}
\newcommand{\dT}{\ensuremath|\hspace{-2.1pt}|}
\begin{document}
\allowdisplaybreaks\frenchspacing

\baselineskip=1.2\baselineskip

\title[Algebraic actions of the discrete Heisenberg group] {Algebraic actions of the discrete Heisenberg group: Expansiveness and homoclinic points}

\author{Martin G\"oll} \address{Martin G\"oll: Mathematical Institute, University of Leiden, PO Box 9512, 2300 RA Leiden, The Netherlands} \email{gollm@math.leidenuniv.nl}

\author{Klaus Schmidt}
\address{Klaus Schmidt: Mathematics Institute, University of Vienna, Oskar-Morgenstern-Platz 1, A-1090 Vienna, Austria \newline\indent \textup{and} \newline\indent Erwin Schr\"odinger Institute for Mathematical Physics, Boltzmanngasse~9, A-1090 Vienna, Austria} \email{klaus.schmidt@univie.ac.at}

\author{Evgeny Verbitskiy}
\address{Evgeny Verbitskiy: Mathematical Institute, University of Leiden, PO Box 9512, 2300 RA Leiden, The Netherlands \newline\indent \textup{and} \newline\indent Johann Bernoulli Institute for Mathematics and Computer Science, University of Groningen, PO Box 407, 9700 AK, Groningen, The Netherlands} \email{e.a.verbitskiy@rug.nl}
\keywords{Expansiveness, homoclinic points, algebraic action, symbolic covers}

\subjclass[2010]{Primary: 54H20, 37A45, 43A20; Secondary: 37A35, 37B40}
\thanks{The authors would like to thank Hanfeng Li and Doug Lind for helpful discussions and insights.\newline\indent MG gratefully acknowledges support by a
Huygens Fellowship from Leiden University.
\newline\indent MG and EV would like to thank the Erwin Schr\"{o}dinger
Institute, Vienna, and KS the University of Leiden, for hospitality and support while some of this work was done.
}

	\begin{abstract}
We survey some of the known criteria for expansiveness of principal algebraic actions of countably infinite discrete groups. In the special case of the discrete Heisenberg group we propose a new approach to this problem based on Allan's local principle.

Furthermore, we present a first example of an absolutely summable homoclinic point for a nonexpansive action of the discrete Heisenberg group and use it to construct an equal-entropy symbolic cover of the system.
	\end{abstract}
	\maketitle

\section{Introduction}

Let $\Gamma$ be a countably infinite discrete group with integer group ring $\mathbb{Z}[\Gamma ]$. Every $g\in\mathbb{Z}[\Gamma ]$ is written as a formal sum $g =\sum_{\gamma }g_{\gamma }\cdot \gamma $, where $g_{\gamma }\in\mathbb{Z}$ for every $\gamma \in\Gamma $ and $\sum_{\gamma \in\Gamma }|g_{\gamma }|<\infty $. The set $\textup{supp}(g)=\{\gamma \in \Gamma :g_\gamma \ne 0\}$ is called the \textit{support} of $g$. For $g=\sum_{\gamma \in\Gamma }g_\gamma \cdot \gamma \in\mathbb{Z}[\Gamma ]$ we denote by $g^*=\sum_{\gamma \in\Gamma }g_\gamma \cdot \gamma ^{-1}$ the \textit{adjoint} of $g$. The map $g\mapsto g^*$ is an \textit{involution} on $\mathbb{Z}[\Gamma ]$, i.e., $(gh)^*=h^*g^*$ for all $g,h\in\mathbb{Z}[\Gamma ]$, where the product $fg$ of two elements $f=\sum_{\gamma }f_\gamma \cdot \gamma $ and $g=\sum_\gamma g_\gamma \cdot \gamma $ in $\mathbb{Z}[\Gamma ]$ is given by \textit{convolution}: 
$$fg=\sum_{\gamma ,\gamma '\in\Gamma }f_\gamma g_{\gamma '}\cdot \gamma \gamma '=\sum_{\gamma \in \Gamma }\sum_{\delta \in \Gamma }f_\gamma g_{\gamma ^{-1}\delta }\cdot \delta \,.$$
In view of this it will occasionally be convenient to write $f*g$ instead of $fg$ for the product in $\mathbb{Z}[\Gamma ]$.

An \textit{algebraic $\Gamma $-action} is a homomorphism $\alpha \colon \Gamma \longrightarrow \textup{Aut}(X)$ from $\Gamma $ to the group of (continuous) automorphisms of a compact second countable abelian group $X$. If $\alpha $ is an algebraic $\Gamma $-action, then $\alpha ^\gamma \in \textup{Aut}(X)$ denotes the image of $\gamma \in \Gamma $, and $\alpha ^{\gamma \gamma '}=\alpha ^\gamma \circ \alpha ^{\gamma '}$ for every $\gamma ,\gamma '\in \Gamma $. The action $\alpha $ induces an action of $\mathbb{Z}[\Gamma ]$ by group homomorphisms $\alpha ^f\colon X\longrightarrow X$, where $\alpha ^f=\sum_{\gamma \in\Gamma }f_\gamma \alpha ^\gamma $ for every $f=\sum_{\gamma \in\Gamma }f_\gamma \cdot \gamma \in\mathbb{Z}[\Gamma ]$. Clearly, if $f,g\in\mathbb{Z}[\Gamma ]$, then $\alpha ^{fg}=\alpha ^f\circ \alpha ^g$.

Let $\hat{X}$ be the dual group of the compact abelian group $X$. For every $\gamma \in \Gamma $ we denote by $\hat{\alpha }^\gamma $ the automorphism of $\hat{X}$ dual to $\alpha ^\gamma $ and observe that $\hat{\alpha }^{\gamma \gamma '}=\hat{\alpha }^{\gamma '}\circ \hat{\alpha }^\gamma $ for all $\gamma ,\gamma '\in \Gamma $. If $\hat{\alpha }^f\colon \hat{X}\longrightarrow \hat{X}$ is the group homomorphism dual to $\alpha ^f$ we set $f\cdot a=\hat{\alpha }^{f^*}a$ for every $f\in\mathbb{Z}[\Gamma ]$ and $a\in \hat{X}$. The resulting map $(f,a)\mapsto f\cdot a$ from $\mathbb{Z}[\Gamma ]\times \hat{X}$ to $\hat{X}$ satisfies that $(fg)\cdot a=f\cdot (g\cdot a)$ for all $f,g\in\mathbb{Z}[\Gamma ]$ and turns $\hat{X}$ into a module over the group ring $\mathbb{Z}[\Gamma ]$. Conversely, if $M$ is a countable module over $\mathbb{Z}[\Gamma ]$, we set $X=\widehat{M}$ and put $\hat{\alpha }^fa=f^*\cdot a$ for $f\in\mathbb{Z}[\Gamma ]$ and $a\in M$. The maps $\alpha ^f\colon \widehat{M}\longrightarrow \widehat{M}$ dual to $\hat{\alpha }^f,\,f\in\mathbb{Z}[\Gamma ]$, define an action of $\mathbb{Z}[\Gamma ]$ by homomorphisms of $\widehat{M}$, which in turn induces an algebraic action $\alpha $ of $\Gamma $ on $X=\widehat{M}$.

The simplest examples of algebraic $\Gamma $-actions arise from $\mathbb{Z}[\Gamma ]$-modules of the form $M=\mathbb{Z}[\Gamma ]/\mathbb{Z}[\Gamma ]f$ with $f\in\mathbb{Z}[\Gamma ]$. Since these actions are determined by principal left ideals of $\mathbb{Z}[\Gamma ]$ they are called \textit{principal algebraic $\Gamma $-actions}. In order to describe these actions more explicitly we put $\mathbb{T}=\mathbb{R}/\mathbb{Z}$ and define the left and right shift-actions $\lambda $ and $\rho $ of $\Gamma $ on $\mathbb{T}^\Gamma $ by setting
	\begin{equation}
	\label{eq:lambda}
(\lambda ^\gamma x)_{\gamma '}=x_{\gamma ^{-1}\gamma '},\qquad (\rho ^\gamma x)_{\gamma '}=x_{\gamma '\gamma },
	\end{equation}
for every $\gamma \in \Gamma $ and $x=(x_{\gamma '})_{\gamma '\in \Gamma }\in\mathbb{T}^\Gamma $. The $\Gamma $-actions $\lambda $ and $\rho $ extend to actions of $\mathbb{Z}[\Gamma ]$ on $\mathbb{T}^\Gamma $ given by
	\begin{equation}
	\label{eq:Lambda}
\lambda ^f=\textstyle\sum_{\gamma \in\Gamma }f_\gamma \lambda ^\gamma ,\qquad \rho ^f=\textstyle\sum_{\gamma \in\Gamma }f_\gamma \rho ^\gamma
	\end{equation}
for every $f= \sum_{\gamma \in\Gamma }f_\gamma \cdot \gamma \in\mathbb{Z}[\Gamma ]$.

\smallskip The pairing $\langle f,x\rangle =e^{2\pi i\sum_{\gamma \in \Gamma }f_\gamma x_\gamma }$, $f=\sum_{\gamma \in \Gamma }f_\gamma \cdot \gamma \in\mathbb{Z}[\Gamma ]$, $x=(x_\gamma )\in\mathbb{T}^\Gamma $, identifies $\mathbb{Z}[\Gamma ]$ with the dual group $\widehat{\mathbb{T}^\Gamma }$ of $\mathbb{T}^\Gamma $. We claim that, under this identification,
	\begin{equation}
	\label{eq:Xf}
	\begin{aligned}
X_f&\coloneqq\ker \rho ^f = \bigl\{x\in \mathbb{T}^\Gamma :\rho ^fx=\textstyle\sum_{\gamma \in \Gamma }f_\gamma \rho ^\gamma x=0\bigr\}
	\\
&\,=(\mathbb{Z}[\Gamma ]f)^\perp = \widehat{\mathbb{Z}[\Gamma ]/\mathbb{Z}[\Gamma ]f}\subset \widehat{\mathbb{Z}[\Gamma ]}=\mathbb{T}^\Gamma .
	\end{aligned}
	\end{equation}
Indeed,
	\begin{align*}
\langle h,\rho ^fx\rangle &=\smash[t]{\Bigl\langle h,\sum\nolimits_{\gamma '\in \Gamma }f_{\gamma '}\rho ^{\gamma '}x\Bigr\rangle =\sum\nolimits_{\gamma \in \Gamma }h_\gamma \sum\nolimits_{\gamma '\in \Gamma }f_{\gamma '}x_{\gamma \gamma '}}
	\\
&=\sum\nolimits_{\gamma \in \Gamma }\sum\nolimits_{\gamma '\in \Gamma }h_{\gamma \gamma '^{-1}}f_{\gamma '}x_\gamma =\sum\nolimits_{\gamma \in \Gamma }(hf)_\gamma x_\gamma =\langle hf,x\rangle
	\end{align*}
for every $h\in\mathbb{Z}[\Gamma ]$ and $x\in\mathbb{T}^\Gamma $, so that $x\in\ker \rho ^f$ if and only if $x\in (\mathbb{Z}[\Gamma ]f)^\perp$.

Since the $\Gamma $-actions $\lambda $ and $\rho $ on $\mathbb{T}^\Gamma $ commute, the group $X_f=\ker\rho ^f\subset \mathbb{T}^\Gamma $ is invariant under $\lambda $, and we denote by $\alpha _f$ the restriction of $\lambda $ to $X_f$. For convenience of terminology we introduce the following definition.

	\begin{defi}
	\label{d:principal}
$(X_f,\alpha _f)$ is the \textit{principal algebraic $\Gamma $-action} defined by $f\in\mathbb{Z}[\Gamma ]$.
	\end{defi}

An algebraic action $\alpha $ of a countable group $\Gamma $ on a compact abelian group $X$ with identity element $0=0_X$ and an invariant metric $d$ is called \textit{expansive} if there exists an $\varepsilon >0$ such that
	\begin{equation}
	\label{eq:expansive}
\sup_{\gamma \in \Gamma} d (\alpha^\gamma x, \alpha^\gamma y) = \sup_{\gamma \in \Gamma} d \bigl(\alpha^\gamma (x-y), 0_X\bigr)\geq \varepsilon
	\end{equation}
for all distinct $x,y \in X$. According to \cite[Theorem 3.2]{DS}, expansiveness of a principal action $\alpha _f$, $f\in\mathbb{Z}[\Gamma ]$, is equivalent to the invertibility of $f \in \Z [ \Gamma ]$ in the larger group algebra $\ell^1(\Gamma,\mathbb{R})$. Unfortunately this characterisation of expansiveness does not --- in general --- lend itself to an effective `algorithmic' test for expansiveness. Only in special cases, like for $\Gamma =\Z^d$ (\cite[Lemma 6.8 and Theorem 6.5 (4)]{Schmidt} or, more generally, for every countable abelian group (\cite[Theorem 3.3]{Miles} in combination with Wiener's Lemma), one obtains an easily verifiable criterion to decide if an algebraic dynamical system is expansive or not.

For nonabelian countably infinite groups, checking invertibility of an element $f\in \mathbb{Z}[\Gamma ]$ in the corresponding group algebra $\ell^1(\Gamma,\mathbb{R})$ is far from simple. In this paper we shall concentrate on the discrete Heisenberg group $\Gamma =\mathbb{H}\subset \textup{SL}(3,\mathbb{Z})$, defined by
	\begin{equation}
	\label{eq:Heisenberg}
\mathbb{H} = \left\{\left(
	\begin{smallmatrix}
1 & a & b
	\\
0 & 1 & c
	\\
0 & 0 & 1
	\end{smallmatrix}
\right)\colon\enspace a,b,c \in \Z \right\}.
	\end{equation}
The discrete Heisenberg group $\mathbb{H}$ is not of type I, since it does not possess an abelian normal subgroup of finite index (cf. \cite{Thoma}). Therefore, its spectrum, i.e.\ the space of unitary equivalence classes of irreducible unitary representations of $\mathbb{H}$, has no nice parametrisation (it is not a standard Borel space). This and several other pathologies which result from the fact that $\h$ is not of type I are discussed in \cite[Chapter 7]{Folland}.

In this paper we discuss several methods to decide whether an element $f \in \Z[\h]$ is invertible in $\ell^1(\h,\C)$ or not. In Section 3 we introduce Allan's local principle, which can be summarised as follows: in order to study invertibility of an element $f$ in $\ell^1(\h,\mathbb{R})$ we view $f$ as an element of the complexification $\ell^1(\h,\mathbb{C})$ of $\ell ^1(\mathbb{H},\mathbb{R})$ and project it to its equivalence classes $ [ f ]_\sim $ in certain quotient spaces $\ell^1(\h,\C)/ \mathord{\sim}$ of $\ell^1(\h,\C)$. The invertibility of $f$ in $\ell^1(\h,\C)$ (and hence in $\ell^1(\h,\R)$) is then equivalent to the invertibility of $ [ f ]_\sim $ in $\ell^1(\h,\C)/ \mathord{\sim}$ for each of these quotient spaces. We compare this approach with the cocycle method of \cite{LS2} described at the end of Section \ref{s:expansive}.

A second focus of this paper will be on homoclinic points of $(X_f, \alpha_f)$. A point $x \in X_f$ is \textit{homoclinic} if $\lim_{\gamma \to \infty} \alpha_f^\gamma x = 0$. A homoclinic point $x = (x_\gamma )\in X_f$ is \textit{summable} if $\sum_{\gamma \in \Gamma }\dT x_\gamma \dT <\infty $, where $\dT t\dT$ denotes the distance from $0$ of a point $t\in \mathbb{T}$. The summable homoclinic points play an essential role in proving specification and constructing symbolic covers of principal algebraic dynamical systems (\cite{LS1}). If $f \in \mathbb{Z}[\Gamma ]$ is invertible in $\ell ^1(\Gamma ,\mathbb{R})$ we obtain a summable homoclinic point in $X_f$ by reducing the coordinates of $f^{-1}\in\ell ^1(\Gamma ,\mathbb{R})$ $(\textup{mod}\;1)$ (cf. \cite{LS1}). However, if $f$ is not invertible, one may still be able to construct summable homoclinic points by using a multiplier method introduced in \cite{sv} for harmonic polynomials in $\mathbb{Z}[\mathbb{Z}^d]$ and subsequently studied for more general elements in $\mathbb{Z}[\mathbb{Z}^d]$ in \cite{lsv} and \cite{lsv2}. The method for this construction can be described as follows: if $f\in \mathbb{Z}[\Gamma ]$, and if there exists a fundamental solution $w\in \mathbb{R}^\Gamma $ of the equation $f\cdot w =1$ (where $1=1_{\mathbb{Z}[\Gamma ]}$ stands for the identity element in $\mathbb{Z}[\Gamma ]$) one can try to find a central element $g\in \Z[\Gamma ]$ such that the point $\phi =w \cdot g$ lies in $\ell^1(\h,\mathbb{R})$. By reducing the coordinates of $\phi $ $(\textup{mod}\;1)$ one again obtains a summable homoclinic point in $X_f$.

In the abelian case this method uses tools from commutative algebra and Fourier analysis. The nonabelian case under consideration here, where $\Gamma =\mathbb{H}$, requires some highly nontrivial combinatorial ideas, even for very elementary examples (like the element $f=2-x^{-1}-y^{-1}\in \mathbb{Z}[\mathbb{H}]$, expressed in terms of the usual generators $x,y,z$ of $\mathbb{H}$). It is worth mentioning that these are the first explicit examples of summable homoclinic points which have so far been found in the nonabelian and nonexpansive setting. With these summable homoclinic points we construct coding maps $\xi: \ell^\infty (\h,\Z) \longrightarrow (X_f,\alpha_f)$ and state a specification theorem. Furthermore, we show that the full $2$-shift on $\Gamma$, i.e.\ $\{0,1\}^\Gamma$, is an equal-entropy symbolic cover of $(X_f,\alpha_f)$ for the polynomial $f=2-x^{-1}-y^{-1} \in \Z[\h]$.

\subsection{Notation}\label{ss:notation} In these notes $\Gamma $ will always denote a countably infinite discrete group with identity element $1=1_\Gamma $, and $\Z [ \Gamma ]$ will stand for the integer group ring over $\Gamma $. We write $\ell ^\infty (\Gamma ,\mathbb{C})\subset \mathbb{C}^\Gamma $ for the space of bounded complex-valued maps $v=(v_\gamma )$ on $\Gamma $, where $v_\gamma $ is the value of $v$ at $\gamma $, and denote by $\|v\|_\infty =\sup_{\gamma \in\Gamma }|v_\gamma |$ the supremum norm on $\ell ^\infty (\Gamma ,\mathbb{C})$. For $1\le p <\infty $ we set 
$$\ell ^p(\Gamma ,\mathbb{C})=\{v=(v_\gamma )\in\ell ^\infty (\Gamma ,\mathbb{C}):\|v\|_p=\bigl(\sum_{\gamma \in\Gamma }|v_\gamma |^p\bigr)^{1/p}<\infty \}\,.$$ By $\ell ^p(\Gamma ,\mathbb{R})=\ell ^p(\Gamma ,\mathbb{\mathbb{C}})\cap \mathbb{\mathbb{R}}^\Gamma $ and $\ell ^p(\Gamma ,\mathbb{Z})=\ell ^p(\Gamma ,\mathbb{C})\cap \mathbb{Z}^\Gamma $ we denote the additive subgroups of real- and integer-valued elements of $\ell ^p(\Gamma ,\mathbb{C})$, respectively. If $1\le p <\infty $ and $q=\frac{p}{p-1}$ (with $q=\infty $ for $p=1$), $\ell ^q(\Gamma ,\mathbb{C})$ is the dual space of the Banach space $\ell ^p(\Gamma ,\mathbb{C})$. Hence every $w\in \ell ^q(\Gamma ,\mathbb{C})$ defines a bounded linear functional on $\ell ^p(\Gamma ,\mathbb{C})$, which we denote by
	\begin{equation}
	\label{eq:lplq}
v\mapsto (v,w)=\sum\nolimits_{\gamma \in \Gamma }v_\gamma \overline {w}_\gamma ,\enspace \enspace v\in \ell ^p(\Gamma ,\mathbb{C})
	\end{equation}
(where the bar denotes complex conjugation). For $1\le p<\infty $, $\ell ^p(\Gamma ,\mathbb{Z})=\ell ^1(\Gamma ,\mathbb{Z})$ is identified with $\mathbb{Z}[\Gamma ]$ by viewing each $g=\sum_{\gamma }g_\gamma \cdot \gamma \in\mathbb{Z}[\Gamma ]$ as the element $(g_\gamma )\in \ell ^1(\Gamma ,\mathbb{Z})$.

The group $\Gamma $ acts on $\ell ^p(\Gamma ,\mathbb{C})$ isometrically by left and right translations: for every $v\in\ell ^p(\Gamma ,\mathbb{C})$ and $\gamma \in\Gamma $ we denote by $\tilde{\lambda }^\gamma v$ and $\tilde{\rho }^\gamma v$ the elements of $\ell ^p(\Gamma ,\mathbb{C})$ satisfying $(\tilde{\lambda }^\gamma v)_{\gamma '}=v_{\gamma ^{-1}\gamma '}$ and $(\tilde{\rho }^\gamma v)_{\gamma '}=v_{\gamma '\gamma }$, respectively, for every $\gamma '\in\Gamma $. Note that $\tilde{\lambda }^{\gamma \gamma '}=\tilde{\lambda }^\gamma \circ \tilde{\lambda }^{\gamma '}$ and $\tilde{\rho }^{\gamma \gamma '}=\tilde{\rho }^\gamma \circ \tilde{\rho }^{\gamma '}$ for every $\gamma ,\gamma '\in\Gamma $.

The $\Gamma $-actions $\tilde{\lambda }$ and $\tilde{\rho }$ extend to actions of $\ell ^1(\Gamma ,\mathbb{C})$ on $\ell ^p(\Gamma ,\mathbb{C})$ which will again be denoted by $\tilde{\lambda }$ and $\tilde{\rho }$: for $h=(h_\gamma )\in \ell ^1(\Gamma ,\mathbb{C})$ and $v\in\ell ^p(\Gamma ,\mathbb{C})$ we set
	\begin{displaymath}
\tilde{\lambda }^hv=\sum\nolimits_{\gamma \in \Gamma }h_\gamma \tilde{\lambda }^\gamma v,\qquad \tilde{\rho }^hv=\sum\nolimits_{\gamma \in \Gamma }h_\gamma \tilde{\rho }^\gamma v.
	\end{displaymath}
These expressions correspond to the usual convolutions
	\begin{displaymath}
\tilde{\lambda }^hv=h*v,\qquad \tilde{\rho }^hv=v*\bar{h}^*,
	\end{displaymath}
where $h\mapsto h^*$ is the involution on $\ell ^1(\Gamma ,\mathbb{C})$ given by $h^*_\gamma =\bar{h}_{\gamma ^{-1}},\,\gamma \in \Gamma $.

If $\mathcal{H}$ is a (complex) Hilbert space with inner product $\langle \cdot ,\cdot \rangle $ we denote by $\mathcal{B}(\mathcal{H})$ the algebra of bounded linear operators on $\mathcal{H}$ and write $\mathcal{U}(\mathcal{H})\subset \mathcal{B}(\mathcal{H})$ for the group of unitary operators on $\mathcal{H}$, furnished with the strong (or, equivalently, weak) operator topology. Every unitary representation $\pi \colon \Gamma \longrightarrow \mathcal{U}(\mathcal{H})$ of $\Gamma $ can be extended to a representation $\tilde{\pi }\colon \ell ^1(\Gamma ,\mathbb{C})\longrightarrow \mathcal{B}(\mathcal{H})$ of the algebra $\ell ^1(\Gamma ,\mathbb{C})$ by setting $\tilde{\pi }(h)=\sum_{\gamma \in \Gamma }h_\gamma \pi (\gamma )$ for every $h=(h_\gamma )\in \ell ^1(\Gamma ,\mathbb{C})$. This map is continuous (w.r.t. the strong operator topology on $\mathcal{B}(\mathcal{H})$) and satisfies that $\tilde{\pi }(h)^*= \tilde{\pi }(h^*)$ for all $h\in \ell ^1(\Gamma ,\mathbb{C})$, where $\tilde{\pi }(h)^*$ is the adjoint operator of $\tilde{\pi }(h)$. The representation $\pi$ of $\Gamma$ (or, equivalently, the representation $\tilde{\pi }$ of $\ell ^1(\Gamma ,\mathbb{C})$) is \textit{irreducible} if it does not admit a nontrivial invariant subspace of $\mathcal{H}$. Clearly, if $\pi $ is irreducible, it maps every central element of $\Gamma $ to a scalar multiple of the identity operator on $\mathcal{H}$.

\subsection{States and representations}\label{ss:states} A linear map $\phi \colon \ell ^1(\Gamma ,\mathbb{C})\longrightarrow \mathbb{C}$ is a \textit{state} on $\ell ^1(\Gamma,\C)$ if $\phi (1_{\ell ^1(\Gamma ,\mathbb{C})})=1$ and $\phi (h^*h)\ge0$ for every $h\in \ell ^1(\Gamma ,\mathbb{C})$. If $\pi $ is a unitary representation of $\Gamma$ on a Hilbert space $\mathcal{H}$ and $v\in \mathcal{H}$ is a unit vector, then the map
	\begin{equation}
	\label{eq:state}
h\mapsto \phi (h)=\langle \tilde{\pi }(h)v,v\rangle,\quad h\in \ell ^1(\Gamma ,\mathbb{C}),
	\end{equation}
is a state on $\ell ^1(\Gamma ,\mathbb{C})$. Moreover, every state on $\ell ^1(\Gamma ,\mathbb{C})$ arises in this way: if $\phi $ is a state on $\ell ^1(\Gamma ,\mathbb{C})$, then there exists a cyclic\footnote{A unitary representation $\pi $ of $\Gamma $ on a Hilbert space $\mathcal{H}$ is \textit{cyclic} if there exists a vector $v\in \mathcal{H}$ whose orbit $\{\pi (\gamma )v:\gamma \in \Gamma \}$ spans $\mathcal{H}$; such a vector $v$ is called \textit{cyclic}.} unitary representation $\pi _\phi $ of $\Gamma $ on a complex Hilbert space $\mathcal{H}_\phi $ with cyclic vector $v_\phi $ satisfying \eqref{eq:state}. The state $\phi $ determines the pair $(\pi _\phi ,v_\phi )$ uniquely up to unitary equivalence: if $\pi '$ is a second representation of $\Gamma $ on a Hilbert space $\mathcal{H}'$, and if $v'\in \mathcal{H}'$ is a unit vector satisfying \eqref{eq:state}, then there exists a unitary operator $W\colon \mathcal{H}_\phi \longrightarrow \mathcal{H}'$ with $Wv_\phi =v'$ and $W\circ \pi _\phi (\gamma )=\pi '(\gamma )\circ W$ for every $\Gamma \in \Gamma $. A state $\phi $ on $\ell ^1(\Gamma ,\mathbb{C})$ is \textit{pure} if the corresponding representation $\pi _\phi $ of $\Gamma $ (or, equivalently, the representation $\tilde{\pi }_\phi $ of $\ell ^1(\Gamma ,\mathbb{C})$) is irreducible.

Since every bounded linear functional $\phi \colon \ell ^1(\Gamma ,\mathbb{C})\longrightarrow \mathbb{C}$ is of the form \eqref{eq:lplq} for some $w\in \ell ^\infty (\Gamma ,\mathbb{C})$ there exists, for every state $\phi $ on $\ell ^1(\Gamma ,\mathbb{C})$, an element $w\in \ell ^\infty (\Gamma ,\mathbb{C})$ satisfying
	\begin{equation}
	\label{eq:state2}
\phi (h) = (h,w) = \sum\nolimits_{\gamma \in \Gamma }h_\gamma \overline{w}_\gamma
	\end{equation}
for every $h\in \ell ^1(\Gamma ,\mathbb{C})$. In particular,
	\begin{equation}
	\label{eq:state3}
w_{1_\Gamma }=1\enspace \enspace \textup{and}\enspace \enspace (h^*h,w)\ge0\enspace \enspace \textup{for every}\;h\in \ell ^1(\Gamma ,\mathbb{C}).
	\end{equation}
We denote by $\mathcal{P}(\Gamma )\subset \ell ^\infty (\Gamma ,\mathbb{C})$ the set of all elements $w\in \ell ^\infty (\Gamma ,\mathbb{C})$ satisfying \eqref{eq:state3}. The elements of $\mathcal{P}(\Gamma )$ are called (\textit{normalised}) \textit{positive definite functions} on $\Gamma $.

\subsection{The Discrete Heisenberg group}\label{ss:heisenberg} A canonical generating set of the discrete Hei\-senberg group $\mathbb{H}$ in \eqref{eq:Heisenberg} is given by $S=\{x,x^{-1},y,y^{-1}\}$, where
	$$
x = \left(
	\begin{smallmatrix}
1 & 1 & 0
	\\
0 & 1 & 0
	\\
0 & 0 & 1
	\end{smallmatrix}
\right), \quad y = \left(
	\begin{smallmatrix}
1 & 0 & 0
	\\
0 & 1 & 1
	\\
0 & 0 & 1
	\end{smallmatrix}
\right) .
	$$
The center of $\h$ is generated by
	$$
\smash[t]{z= xyx^{-1}y^{-1}= \left(
	\begin{smallmatrix}
1 & 0 & 1
	\\
0 & 1 & 0
	\\
0 & 0 & 1
	\end{smallmatrix}
\right).}
	$$
The generators $x,y,z$ satisfy the relations
	\begin{equation}
	\label{eq:relations}
xz=zx,\enspace yz=zy,\enspace x^ky^l=y^lx^kz^{kl},\enspace k,l\in\mathbb{Z}.
	\end{equation}

\subsection{$q$-binomial coefficients} The \textit{q-binomial coefficients} are defined by
	\begin{equation}
	\label{eq:q-binomial}
\qbinom{n}{k}= \prod_{i=0}^{k-1} \frac{1-q^{n-i}}{1-q^{i+1}} \quad \textit{for all $n \in \N$ and $0 \leq k \leq n$}.
	\end{equation}
The connection between $q$-binomial coefficients and the discrete Heisenberg group is explained by the following fact: if the generators $x,y \in \h$ fulfil the commutation relation $xy=zyx$, then
	$$
(x+y)^n =\sum_{k=0}^n \qbinom{n}{k} x^k y^{n-k}
	$$
where $\qbinom{n}{k}$ are $q$-binomial coefficients with $q=z^{-1}$. We will refer to this fact as the {\it $q$-binomial theorem}.

The $q$-binomial coefficients $\qbinom{n}{k}$ are polynomials of degree $k(n-k)$ with nonnegative integer coefficients. Moreover, they are \textit{unimodal}, i.e.\ if $\qbinom{n}{k}=\sum_{i=0}^{k(n-k)} c_i q^i$, then for $r=\lceil k(n-k)/2 \rceil $ one has
	$$
c_0 \leq c_1 \leq c_2 \leq \cdots \leq c_r \geq c_{r+1}\geq \cdots \geq c_{k(n-k)-1}\geq c_{k(n-k)}.
	$$
The $q$-binomial coefficients are \textit{symmetric} in the sense that $\qbinom{n}{k} =\qbinom{n}{n-k}$.

\section{Expansive algebraic actions}
	\label{s:expansive}

We recall that an algebraic action $\alpha $ of a countably infinite discrete group $\Gamma $ on a compact group $X$ is \textit{expansive} if it satisfies \eqref{eq:expansive} or, equivalently, if there exists a neighbourhood $U$ of the identity element $0_X\in X$ with
	\begin{equation}
	\label{eq:expansive2}
\bigcap_{\gamma\in\Gamma}\alpha^{\gamma}(U)=\{0_X\}.
	\end{equation}

In this section we discuss characterisations of expansiveness of principal algebraic actions of a countably infinite discrete group $\Gamma $.

	\begin{theo}
	\label{t:expa}
Let $\Gamma $ be a countably infinite discrete group, $f\in \mathbb{Z}[\Gamma ]$, and let $(X_f,\alpha _f)$ be the principal algebraic $\Gamma $-action defined by $f$ \textup{(}Definition \ref{d:principal}\textup{)}. The following statements are equivalent:
	\begin{enumerate}
	\item
The principal algebraic $\Gamma $-action $\alpha _f$ is expansive;
	\item
$f$ is invertible in $\ell^1 (\Gamma,\R)$;
	\item
$K_\infty (f)\coloneqq \{v\in \ell^\infty (\Gamma,\C)\,:\, \tilde{\rho }^fv = v*f^* = 0\} = \{0\}$;
	\item
The one-sided ideals $\mathbb{Z}[\Gamma ]f\subset \mathbb{Z}[\Gamma ]$ and $f\mathbb{Z}[\Gamma ]\subset \mathbb{Z}[\Gamma ]$ both contain lopsided\,\footnote{An element $g=\sum_{\gamma }g_\gamma \cdot \gamma \in \mathbb{Z}[\Gamma ]$ is \textit{lopsided} if there exists a $\gamma _0\in \Gamma $ with $|g_{\gamma _0}|>\sum_{\gamma \in \Gamma \smallsetminus \{\gamma _0\}}|g_\gamma |$. In this case we call $g_{\gamma _0}$ the \textit{dominant} coefficient of $g$.} elements.
	\end{enumerate}
	\end{theo}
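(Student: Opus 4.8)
The equivalence of (1) and (2) is quoted from \cite[Theorem 3.2]{DS}, so the plan is to fold (3) and (4) into the same equivalence class by establishing the implications (2)$\Rightarrow$(3)$\Rightarrow$(1) together with (2)$\Leftrightarrow$(4); combined with the cited (1)$\Leftrightarrow$(2) this yields all four equivalences. Throughout I would use the identity $\tilde\rho^f v=v*f^*$ from Section~\ref{ss:notation}, the fact that $\|f^*\|_1=\|f\|_1$, and the neighbourhood formulation \eqref{eq:expansive2} of expansiveness.

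For (2)$\Rightarrow$(3): if $f$ is invertible in $\ell^1(\Gamma,\mathbb{R})$ with inverse $f^{-1}$, then for any $v\in K_\infty(f)$ I right-convolve the relation $v*f^*=0$ by $(f^{-1})^*$ and use the involution law $(gh)^*=h^*g^*$ to obtain $v*\bigl(f^*(f^{-1})^*\bigr)=v*(f^{-1}f)^*=v*1=v$, so $v=0$ and $K_\infty(f)=\{0\}$. For (3)$\Rightarrow$(1) I argue by contraposition using \eqref{eq:expansive2}. If $\alpha_f$ is not expansive, then for the neighbourhood $U=\{x\in X_f:\dT x_{1_\Gamma}\dT<\varepsilon\}$ with any $\varepsilon<\tfrac1{2\|f\|_1}$ the intersection $\bigcap_\gamma\alpha_f^\gamma(U)=\{x\in X_f:\dT x_\gamma\dT<\varepsilon\text{ for all }\gamma\in\Gamma\}$ contains a nonzero point $x$. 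Lifting $x$ coordinatewise to the representative $v\in\ell^\infty(\Gamma,\mathbb{R})$ with $|v_\gamma|=\dT x_\gamma\dT<\varepsilon$, the condition $\rho^f x=0$ forces $v*f^*\in\ell^\infty(\Gamma,\mathbb{Z})$, while $\|v*f^*\|_\infty\le\|v\|_\infty\|f\|_1<\tfrac12$; an integer-valued vector of sup-norm below $1$ vanishes, so $0\ne v\in K_\infty(f)$, contradicting (3).

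The implication (4)$\Rightarrow$(2) rests on the observation that a lopsided element $g$ with dominant coefficient $g_{\gamma_0}$ is invertible in $\ell^1(\Gamma,\mathbb{R})$: writing $g=g_{\gamma_0}\gamma_0(1-u)$ with $u=-g_{\gamma_0}^{-1}\gamma_0^{-1}\sum_{\gamma\ne\gamma_0}g_\gamma\cdot\gamma$, one has $\|u\|_1<1$, so $1-u$ is invertible via the Neumann series and hence so is $g$. If $hf$ is a lopsided element of $\mathbb{Z}[\Gamma]f$ then $(hf)^{-1}h$ is a left inverse of $f$, and if $fk$ is a lopsided element of $f\mathbb{Z}[\Gamma]$ then $k(fk)^{-1}$ is a right inverse; a left inverse $\ell$ and a right inverse $r$ satisfy $\ell=\ell(fr)=(\ell f)r=r$, so $f$ is invertible in $\ell^1(\Gamma,\mathbb{R})$.

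The remaining implication (2)$\Rightarrow$(4) is the main obstacle, and I would handle it by a truncation-and-rounding argument. Since $f^{-1}\in\ell^1(\Gamma,\mathbb{R})$, I first fix a finite set $F\subset\Gamma$ with $\|f^{-1}|_{F^c}\|_1<\tfrac1{4\|f\|_1}$, and then, for an integer $N>2\|f\|_1\,|F|$, define $h\in\mathbb{Z}[\Gamma]$ by rounding each coordinate of $Nf^{-1}$ on $F$ to a nearest integer and setting the rest to $0$. Writing $hf=N\cdot 1+(h-Nf^{-1})f$ and estimating $\|(h-Nf^{-1})f\|_1\le\|f\|_1\bigl(\tfrac12|F|+N\|f^{-1}|_{F^c}\|_1\bigr)$ shows that the identity coefficient of $hf$ exceeds the sum of the absolute values of the remaining coefficients, i.e.\ $hf$ is lopsided; the symmetric choice $fk$ with $k$ the same rounded vector works for $f\mathbb{Z}[\Gamma]$, using $ff^{-1}=1$ in place of $f^{-1}f=1$. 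The delicate point is the order of the quantifiers: the rounding error on $F$ contributes a quantity bounded \emph{independently of} $N$, so inflating $N$ makes the diagonal term $N$ outgrow all off-diagonal contributions, which is exactly what lopsidedness requires.
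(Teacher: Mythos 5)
Your proof is correct, and for the central equivalence $(2)\Leftrightarrow(4)$ it follows the same two ideas as the paper (a Neumann series to invert a lopsided element, and rounding a scaled copy of $f^{-1}$ to manufacture one); but you diverge in three ways that are worth recording. First, the paper disposes of the equivalence of (1), (2) and (3) entirely by citing \cite[Theorem 3.2]{DS}, whereas you supply the short arguments for $(2)\Rightarrow(3)$ (convolve $v*f^*=0$ on the right by $(f^{-1})^*$) and $(3)\Rightarrow(1)$ (lift a uniformly small nonzero point of $\bigcap_\gamma\alpha_f^\gamma(U)$ to an integer-valued, hence zero, image under $\tilde\rho^f$); both are correct, and your cycle $(1)\Rightarrow(2)\Rightarrow(3)\Rightarrow(1)$ together with $(2)\Leftrightarrow(4)$ closes the theorem. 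Second, for $(4)\Rightarrow(2)$ the paper produces only a \emph{left} inverse of $f$ from a lopsided element of $\mathbb{Z}[\Gamma]f$ and then invokes Kaplansky's direct finiteness of group algebras (\cite[p.~122]{K}) to upgrade it to a two-sided inverse; you instead exploit that condition (4) gives lopsided elements in \emph{both} ideals, obtain a left inverse $\ell$ and a right inverse $r$, and conclude by $\ell=\ell(fr)=(\ell f)r=r$. This is more elementary and removes a genuinely nontrivial external input (at the cost of not showing the formally stronger fact that a lopsided element in one ideal already suffices). Third, in $(2)\Rightarrow(4)$ the paper fixes an integer $q>\|f\|_1$ and then asserts the existence of a finitely supported $w$ with coordinates in $\tfrac1q\mathbb{Z}$ and $\|f^{-1}-w\|_1\le\tfrac1{2q}$; as literally stated such a $w$ need not exist, since rounding each of $|F|$ coordinates to the lattice $\tfrac1q\mathbb{Z}$ can already cost $|F|/(2q)$. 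What the estimate actually requires is only $\|f^{-1}-\tfrac1N h\|_1<\tfrac1{2\|f\|_1}$, a threshold independent of the scaling integer, and your version --- choose the finite set $F$ first, then take $N>2\|f\|_1|F|$ and round $Nf^{-1}$ on $F$ --- puts the quantifiers in the right order and makes the bound $\|f\|_1\bigl(\tfrac12|F|+N\|f^{-1}|_{F^c}\|_1\bigr)<N/2$ come out cleanly. So your write-up is not only correct but repairs a small imprecision in the paper's own argument.
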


	\begin{proof}
The equivalence of (1), (2) and (3) was shown in \cite[Theorem 3.2]{DS}. The equivalence of (2) and (4) was pointed out to us by Hanfeng Li; we are grateful to him for permitting us to include the following argument.

If an element $g\in \mathbb{Z}[\Gamma ]$ is lopsided with dominant coefficient $g_{\gamma _0}$ we assume for simplicity that $g_{\gamma _0}>0$ and set $h=\gamma _0^{-1}g$, where $\textup{sgn}(g_{\gamma _0})$ is the sign of $g_{\gamma _0}$. Then $h_1$ is the dominant coefficient of $h$, $h_1=|g_{\gamma _0}|>0$, and $h'=\frac{1}{h_1}\cdot h-1\in \ell ^1(\Gamma ,\mathbb{R})$ satisfies that $\|h'\|_1<1$ and $h=h_1\cdot (1+h')$. Hence
	\begin{displaymath}
h^{-1}=\tfrac{1}{h_1}(1+h')^{-1}=\tfrac{1}{h_1}\cdot \bigl(\textstyle\sum_{n\ge0}(-1)^n\cdot {h'}^n\bigr) \in \ell ^1(\Gamma ,\mathbb{R}),
	\end{displaymath}
which implies that $g^{-1}=h^{-1}\cdot \gamma _0\in \ell ^1(\Gamma ,\mathbb{R})$.

If $\mathbb{Z}[\Gamma ]f$ contains a lopsided element $hf$, then the preceding paragraph shows that there exists a $v\in \ell ^1(\Gamma ,\mathbb{R})$ with $v*(hf)=(v*h)*f = 1$. Hence $f$ has a left inverse in $\ell ^1(\Gamma ,\mathbb{R})$, which implies that $f$ is invertible in $\ell ^1(\Gamma ,\mathbb{R})$ (cf. \cite[p. 122]{K}). Similarly we see that $f$ is invertible in $\ell ^1(\Gamma ,\mathbb{R})$ whenever the right ideal $f\mathbb{Z}[\Gamma ]$ (or, equivalently, the left ideal $\mathbb{Z}[\Gamma ]f^*$) contains a lopsided element.

Now assume that $f$ is invertible in $\ell ^1(\Gamma ,\mathbb{R})$ and set $v=f^{-1}$. Let $q>\|f\|_1$ be an integer and choose an element $w\in \ell ^1(\Gamma ,\mathbb{R})$ with rational coordinates $w_\gamma =\frac{k_\gamma }{q}\,,\,k_\gamma \in\mathbb{Z}$, such that $\| v -w\|_1 \le\frac{1}{2q}$, hence $w\in\Q [\Gamma]$. Then $h\coloneqq qw\in \mathbb{Z}[\Gamma ]$ and
	\begin{displaymath}
\textstyle{|(hf)_{1_{\Gamma}} -q(v*f)_{1_{\Gamma}} |=q\cdot |(wf)_{1_{\Gamma}} -(v*f)_{1_{\Gamma}} | \le q\cdot \frac{1}{2q} \cdot \|f\|_1 = \frac{\|f\|_1}{2}} \,.
	\end{displaymath}
 Since
	\begin{displaymath}
\smash[t]{q(v*f)_\gamma =\begin{cases}q >\|f\|_1&\textup{if}\;\gamma =1_\Gamma ,\\0&\textup{otherwise}, \end{cases}}
	\end{displaymath}
	we obtain that
	$$
	\textstyle{|(hf)_{1_{\Gamma}}| > \frac{\|f\|_1}{2}}\,.
	$$
	Moreover,
	\begin{align*}
	\sum_{\gamma \in \Gamma \setminus \{ e_{\Gamma}\} } \textstyle{|(hf)_\gamma |} & =
	\sum_{\gamma\in\Gamma \setminus \{ e_{\Gamma}\}} \textstyle{|(hf)_\gamma -q(v*f)_{\gamma} |} 
	\leq \displaystyle{\sum_{\gamma \in \Gamma}} \textstyle{|(hf)_\gamma -q(v*f)_{\gamma} |} 
	 \\&=\sum_{\gamma\in\Gamma} q\cdot \textstyle{|(wf)_\gamma -(v*f)_\gamma |}
\leq q \cdot \|w-v\|_1 \cdot \|f\|_1  \le  q\cdot \frac{1}{2q} \cdot \|f\|_1 = \frac{\|f\|_1}{2} \,.
	\end{align*}
Hence, the element $hf\in \mathbb{Z}[\Gamma ]f$ is lopsided.

Since the invertibility of $f$ in $\ell ^1(\Gamma ,\mathbb{R})$ is equivalent to that of $f^*$, the preceding paragraph shows that the left ideal $\mathbb{Z}[\Gamma ]f^*$ contains a lopsided element $hf^*$. Then $(hf^*)^*=fh^*\in f\mathbb{Z}[\Gamma ]$ is again lopsided. This completes the proof of the equivalence of the conditions (2) and (4).
	\end{proof}

We list a few elementary consequences of Theorem \ref{t:expa} (cf. also \cite{DS} and \cite{ER}).

	\begin{coro}
	\label{c:expa}
Let $\Gamma $ be a countably infinite discrete group, $f \in \Z[\Gamma]$, and let $(X_f,\alpha _f)$ be the associated principal algebraic $\Gamma $-action.
	\begin{enumerate}
	\item
The following conditions are equivalent.
	\begin{enumerate}
	\item
$\alpha _f$ is expansive;
	\item
$\alpha _{f^*}$ is expansive;
	\item
The ideal $\mathbb{R}\otimes (\mathbb{Z}[\Gamma ]f)$ is dense in $\ell^1(\Gamma,\mathbb{R})$;
	\item
$K_\infty (f)=\{v\in \ell^{\infty}(\Gamma,\C): (g,v)=0 \, \,\text{for all}\,\, g \in \mathbb{Z}[\Gamma ]f \} = \{0\}$;
	\end{enumerate}
	\item
If there exists a $w\in \ell ^\infty (\Gamma ,\mathbb{R})$ such that $\tilde{\rho }^fw=w*f^*=1_{\mathbb{Z}[\Gamma ]}$, then $\alpha _f$ is expansive if and only if $w\in \ell ^1(\Gamma ,\mathbb{R})$;
	\end{enumerate}
	\end{coro}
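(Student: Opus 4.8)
The plan is to deduce every assertion from Theorem~\ref{t:expa}, whose substantive content---the equivalence of expansiveness of $\alpha_f$ with invertibility of $f$ in $\ell^1(\Gamma,\mathbb{R})$ and with the triviality of $K_\infty(f)=\{v\in\ell^\infty(\Gamma,\mathbb{C}):v*f^*=0\}$---already does all the work, so I would prove each of (b), (c), (d) directly equivalent to (a). For (a)$\Leftrightarrow$(b), recall that $g\mapsto g^*$ is an isometric anti-automorphism of $\ell^1(\Gamma,\mathbb{R})$, so from $fg=gf=1$ one gets $g^*f^*=f^*g^*=1$; hence $f$ is invertible if and only if $f^*$ is, and Theorem~\ref{t:expa} turns this into the equivalence of expansiveness of $\alpha_f$ and $\alpha_{f^*}$.

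For (a)$\Leftrightarrow$(c) I would identify $\mathbb{R}\otimes(\mathbb{Z}[\Gamma]f)$ with the right ideal $\mathbb{R}[\Gamma]f$ generated by $f$ inside the finitely supported real group algebra, which is the image of the dense subspace $\mathbb{R}[\Gamma]\subset\ell^1(\Gamma,\mathbb{R})$ under right multiplication $R_f\colon u\mapsto u*f$. If $f$ is invertible then $R_f$ is a bounded bijection with bounded inverse $R_{f^{-1}}$, hence a homeomorphism, so it carries the dense set $\mathbb{R}[\Gamma]$ onto the dense set $\mathbb{R}[\Gamma]f$. Conversely, if $\mathbb{R}[\Gamma]f$ is dense then $1$ is a limit of elements $g_nf$ with $g_n\in\mathbb{R}[\Gamma]$; for large $n$ we have $\|1-g_nf\|_1<1$, so $g_nf$ is invertible by a Neumann series, giving $f$ a left inverse, whence $f$ is invertible in $\ell^1(\Gamma,\mathbb{R})$ by the one-sided-inverse fact already invoked in the proof of Theorem~\ref{t:expa} (cf.\ \cite[p.~122]{K}).

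The equivalence (a)$\Leftrightarrow$(d) amounts to checking that the annihilator of $\mathbb{Z}[\Gamma]f$ appearing in (d) is exactly the space $K_\infty(f)$ of Theorem~\ref{t:expa}(3). A short reindexing of the pairing gives, for $h\in\mathbb{Z}[\Gamma]$ and $v\in\ell^\infty(\Gamma,\mathbb{C})$, the identity $(hf,v)=\sum_{\delta\in\Gamma}h_\delta\,\overline{(v*f^*)_\delta}$; letting $h$ run over the point masses shows that $(g,v)=0$ for all $g\in\mathbb{Z}[\Gamma]f$ if and only if $v*f^*=0$. Thus the two descriptions of $K_\infty(f)$ coincide, and (d) is just a restatement of Theorem~\ref{t:expa}(3).

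Finally, for part (2) assume $w\in\ell^\infty(\Gamma,\mathbb{R})$ with $w*f^*=1$. If $w\in\ell^1(\Gamma,\mathbb{R})$ then $w$ is a left inverse of $f^*$ in $\ell^1(\Gamma,\mathbb{R})$, so $f^*$---and with it $f$---is invertible there and $\alpha_f$ is expansive. Conversely, if $\alpha_f$ is expansive then $f$, and hence $f^*$, is invertible, with $(f^*)^{-1}\in\ell^1(\Gamma,\mathbb{R})$; right-convolving $w*f^*=1$ by $(f^*)^{-1}$ and using associativity of the convolution of an $\ell^\infty$ element with $\ell^1$ elements yields $w=(f^*)^{-1}\in\ell^1(\Gamma,\mathbb{R})$. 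The only step needing a word of care---though it is entirely routine, being justified by Young's inequality $\|w*g\|_\infty\le\|w\|_\infty\|g\|_1$ and the density of finitely supported functions---is this last associativity; I expect no genuine obstacle, since the corollary's content is already carried by Theorem~\ref{t:expa}.
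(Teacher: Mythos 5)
Your proof is correct, but it routes the equivalence involving (1.c) differently from the paper. The paper handles (1.c) and (1.d) in one stroke: by the Hahn--Banach theorem, density of the subspace $\mathbb{R}\otimes(\mathbb{Z}[\Gamma]f)$ in $\ell^1(\Gamma,\mathbb{R})$ is equivalent to the vanishing of its annihilator in the dual $\ell^\infty(\Gamma,\mathbb{C})$, which is exactly the set in (1.d); both are then tied to expansiveness through Theorem~\ref{t:expa}(3). You instead prove (a)$\Leftrightarrow$(c) directly inside the Banach algebra: invertibility makes $R_f\colon u\mapsto u*f$ a homeomorphism carrying the dense subspace $\mathbb{R}[\Gamma]$ onto a dense image, and conversely density yields some $g$ with $\|1-gf\|_1<1$, hence a Neumann-series left inverse and then invertibility via the one-sided-inverse fact from \cite{K} already used in Theorem~\ref{t:expa}. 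Your argument is more self-contained (no duality needed for (c)), at the cost of having to verify (a)$\Leftrightarrow$(d) separately by the reindexing identity $(hf,v)=\sum_\delta h_\delta\overline{(v*f^*)_\delta}$ --- a computation the paper leaves implicit, having essentially performed it already in the derivation of \eqref{eq:Xf}. For part (2) you also deviate slightly: the paper deduces $w=(f^*)^{-1}$ from the uniqueness of solutions of $w*f^*=1$ in $\ell^\infty$ (uniqueness being exactly $K_\infty(f)=\{0\}$), whereas you obtain it by right-convolving with $(f^*)^{-1}$ and invoking associativity of mixed $\ell^\infty$/$\ell^1$ convolutions; both are sound. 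One terminological quibble: the image of $R_f$ is the \emph{left} ideal generated by $f$, not a right ideal, though this does not affect your argument.
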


	\begin{proof}
The equivalence of (a) and (b) in (1) follows from Theorem \ref{t:expa} and the fact that $f$ is invertible in $\ell ^1(\Gamma ,\mathbb{R})$ if and only if the same is true for $f^*$. The conditions (1.c) and (1.d) are equivalent by the Hahn-Banach Theorem, and both are equivalent to expansiveness by Theorem \ref{t:expa}.

If $\alpha _f$ is expansive, then both $f$ and $f^*$ are invertible in $\ell ^1(\Gamma ,\mathbb{R})$ and $K_\infty (f)\coloneqq \{v\in \ell^\infty (\Gamma,\C)\,:\, \tilde{\rho }^fv = v*f^* = 0\} = \{0\}$ by Theorem \ref{t:expa}. The equation $\tilde{\rho }^fw=w*f^*=1_{\mathbb{Z}[\Gamma ]}$ thus has a unique solution $w\in \ell ^\infty (\Gamma ,\mathbb{R})$. Since $v=(f^*)^{-1}$ is also a solution of this equation it follows that $w=(f^*)^{-1}\in \ell ^1(\Gamma ,\mathbb{R})$. The reverse implication in (2) is also clear from Theorem \ref{t:expa}.
	\end{proof}

If the group $\Gamma $ is nilpotent, the sufficient condition (1.d) of expansiveness in Corollary \ref{c:expa} can be weakened.

	\begin{theo}[\protect{\cite[Theorem 8.2]{ER}}]
	\label{t:ER}
If $\Gamma $ is a countably infinite nilpotent group and $f \in \Z[\Gamma]$, then the principal algebraic action $\alpha_f$ is nonexpansive if and only if there exists an irreducible unitary representation $\pi $ of $\Gamma $ on a Hilbert space $\mathcal{H}$ and a unit vector $v \in \mathcal{H}$ such that $\pi(f) v =0$.
	\end{theo}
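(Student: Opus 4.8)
My plan is to use Theorem \ref{t:expa} to replace nonexpansiveness of $\alpha_f$ by non-invertibility of $f$ in $\ell^1(\Gamma,\C)$ (invertibility in $\ell^1(\Gamma,\R)$ and in its complexification being equivalent), and then to analyse invertibility through the enveloping $C^*$-algebra. The implication from a representation to nonexpansiveness is the soft direction: if $\pi$ is irreducible on $\mathcal{H}$ and $v$ is a unit vector with $\pi(f)v=0$, then an inverse $g=f^{-1}\in\ell^1(\Gamma,\C)$ would extend under $\tilde\pi\colon\ell^1(\Gamma,\C)\to\mathcal{B}(\mathcal{H})$ to give $v=\tilde\pi(g)\tilde\pi(f)v=\tilde\pi(g)\pi(f)v=0$, a contradiction (note $\tilde\pi(f)=\pi(f)$ since $f\in\Z[\Gamma]$). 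Hence $f$ is not invertible and $\alpha_f$ is nonexpansive by Theorem \ref{t:expa}.

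For the converse I would proceed through $C^*(\Gamma)$. Suppose $\alpha_f$ is nonexpansive, so $f$ is not invertible in $\ell^1(\Gamma,\C)$. The crucial structural input, and the place where nilpotency really enters, is that $\ell^1(\Gamma)$ is a \emph{symmetric} Banach $*$-algebra for nilpotent $\Gamma$ (a theorem of Ludwig for groups of polynomial growth). By Hulanicki's theorem, symmetry forces $\ell^1(\Gamma)$ to be a spectral (inverse-closed) subalgebra of its enveloping $C^*$-algebra $C^*(\Gamma)$, so that an element of $\ell^1(\Gamma)$ is invertible there if and only if it is invertible in $C^*(\Gamma)$. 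Consequently $f$ is not invertible in $C^*(\Gamma)$.

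The next, more delicate, point is that the conclusion $\pi(f)v=0$ is asymmetric in $f$ and $f^{*}$, whereas non-invertibility is a two-sided condition; I would resolve this using that a nilpotent group is amenable, so that $C^{*}(\Gamma)=C^{*}_{r}(\Gamma)$ carries the canonical faithful tracial state $\tau(a)=a_{1_\Gamma}$. In such a finite $C^*$-algebra every isometry is a unitary: if $u^{*}u=1$ then $\tau(1-uu^{*})=\tau(1)-\tau(u^{*}u)=0$ with $1-uu^{*}\ge 0$, forcing $uu^{*}=1$ by faithfulness. Applying this to $u=f(f^{*}f)^{-1/2}$ shows that if $f^{*}f$ were invertible then $f$ would be invertible; hence the non-invertibility of $f$ yields that the positive element $f^{*}f$ is \emph{not} invertible in $C^*(\Gamma)$.

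Finally I would manufacture the representation from $f^{*}f$. Since $f^{*}f\ge 0$ is not invertible, $0=\min\sigma(f^{*}f)$, and because $\{\phi(f^{*}f):\phi\text{ a state}\}=\mathrm{conv}\,\sigma(f^{*}f)$ with the state space weak${}^{*}$-compact, there is a state $\phi$ with $\phi(f^{*}f)=0$. The set $S=\{\phi:\phi(f^{*}f)=0\}$ is a nonempty weak${}^{*}$-compact convex \emph{face} of the state space (if $\phi=\tfrac12(\psi_1+\psi_2)\in S$ then the nonnegative numbers $\psi_i(f^{*}f)$ sum to $0$, so $\psi_i\in S$); by Krein--Milman it has an extreme point, which is therefore extreme in the whole state space, i.e.\ a \emph{pure} state $\phi_0$. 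Its GNS representation $\pi_{\phi_0}$ is irreducible with cyclic unit vector $v_{\phi_0}$, and $\|\pi_{\phi_0}(f)v_{\phi_0}\|^2=\langle\pi_{\phi_0}(f^{*}f)v_{\phi_0},v_{\phi_0}\rangle=\phi_0(f^{*}f)=0$, which gives the desired $\pi(f)v=0$ after restricting $\pi_{\phi_0}$ to $\Gamma$. I expect the main obstacle to be the two deep inputs carrying the nilpotency hypothesis — the symmetry of $\ell^1(\Gamma)$ and the resulting spectral invariance of $\ell^1(\Gamma)\subset C^*(\Gamma)$ — together with the care needed in the one-sided/two-sided reduction; the face-and-GNS extraction of the eigenvector is then routine.
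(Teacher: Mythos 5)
Your proof is correct, but it takes a genuinely different route from the paper's. The paper also hinges on Hulanicki's symmetry theorem for $\ell^1$ of a nilpotent group, but then stays entirely inside $\ell^1(\Gamma,\C)$: nonexpansiveness gives, via Corollary~\ref{c:expa}\,(1.d), a nonzero $w\in\ell^\infty(\Gamma,\C)$ annihilating $\mathbb{Z}[\Gamma]f$, so that $\overline{\ell^1(\Gamma,\C)f}$ is a proper closed left ideal; a theorem from Naimark's book (valid in symmetric Banach $*$-algebras) then yields a normalised positive definite function vanishing on this ideal, whose GNS representation kills $f$ on the cyclic vector, and irreducibility is obtained at the end by ``decomposing $\pi_\phi$ into irreducibles''. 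You instead pass to the enveloping $C^*$-algebra via spectral invariance of symmetric $\ell^1$-algebras, use amenability and the canonical faithful trace to reduce the two-sided non-invertibility of $f$ to non-invertibility of the positive element $f^*f$, and then extract a \emph{pure} state by the face/Krein--Milman argument. What your route buys: irreducibility of the GNS representation is automatic, so you avoid the somewhat informal decomposition step (a cyclic representation of a non-type-I group need not split into a direct sum of irreducibles, so your face argument is actually the cleaner way to make that step rigorous); your treatment of the one-sided versus two-sided issue via the trace is also tidier. What it costs: you import more machinery ($C^*(\Gamma)=C^*_r(\Gamma)$, functional calculus, finiteness), and the assertion ``symmetry implies $\ell^1(\Gamma)$ is inverse-closed in $C^*(\Gamma)$'', while standard, itself rests on essentially the same Naimark-type statement about closed left ideals and positive functionals in symmetric Banach $*$-algebras that the paper quotes directly. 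One small attribution point: you invoke Ludwig's theorem for groups of polynomial growth, but a countably infinite nilpotent group need not be finitely generated; the reference that covers the discrete nilpotent case (and the one the paper uses) is Hulanicki's theorem on the symmetry of group algebras of discrete nilpotent groups. Your ``soft'' direction is fine and in fact more direct than the paper's, which routes it back through positive definite functions and $K_\infty(f)$.
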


Since the proof of Theorem \ref{t:ER} was omitted in \cite{ER}, we include it for completeness.

	\begin{proof}
Since $\Gamma $ is nilpotent, the group algebra $\ell ^1(\Gamma ,\mathbb{C})$ is \textit{symmetric} in the sense that $1+g^*g$ is invertible in $\ell ^1(\Gamma ,\mathbb{C})$ for every $g\in \ell ^1(\Gamma ,\mathbb{C})$ and hence for every $g\in \mathbb{Z}[\Gamma ]$ (cf. \cite[Corollary to Theorem 2]{Hulanicki}). According to \cite[\S 23.3, p. 313]{Neumark} there exists, for every closed left ideal $I\subset \ell ^1(\Gamma ,\mathbb{C})$, a normalised positive definite function $p\in \ell ^\infty (\Gamma ,\mathbb{C})$ with $(h,p)=0$ for every $h\in I$ (cf. Subsection \ref{ss:states}).

If $\alpha _f$ is nonexpansive, Corollary \ref{c:expa} yields a nonzero element $w\in \ell ^\infty (\Gamma ,\mathbb{C})$ with $(gf,w)=0$ for every $g\in \mathbb{Z}[\Gamma ]$, and hence for every $g\in \ell ^1(\Gamma ,\mathbb{C})$. The closure $I=\overline{\ell ^1(\Gamma ,\mathbb{C})f}$ is thus a closed left ideal in $\ell ^1(\Gamma ,\mathbb{C})$, and the preceding paragraph shows that there exists a normalised positive definite function $p\in \mathcal{P}(\Gamma )$ with $\phi (hf)\coloneqq (hf,p)=0$ for every $h\in \mathbb{Z}[\Gamma ]$. If $\pi _\phi $ is a cyclic unitary representation of $\Gamma $ on a Hilbert space $\mathcal{H}_\phi $ and $v_\phi \in \mathcal{H}$ a cyclic unit vector satisfying $\phi (h)=(h,p)=\langle \tilde{\pi }_\phi (h)v_\phi ,v_\phi \rangle $ for every $h\in \ell ^1(\Gamma ,\mathbb{C})$ (cf. \eqref{eq:state} -- \eqref{eq:state3}), then we obtain that
	\begin{displaymath}
\langle \tilde{\pi }_\phi (f)v_\phi ,\tilde{\pi }_\phi (g)v_\phi \rangle =\langle \tilde{\pi }_\phi (g)^*\tilde{\pi }_\phi (f) v_\phi ,v_\phi \rangle = \langle \tilde{\pi }_\phi (g^*f) v_\phi ,v_\phi \rangle = 0
	\end{displaymath}
for every $g\in \mathbb{Z}[\Gamma]$. Since $v_\phi $ is cyclic for $\pi _\phi $ it follows that $\tilde{\pi }_\phi (f)v_\phi =0$. By decomposing the representation $\pi _\phi $ into irreducibles we have proved that nonexpansiveness of $\alpha _f$ implies the existence of an irreducible unitary representation $\pi $ of $\Gamma $ on a Hilbert space $\mathcal{H}$ and of a unit vector $v\in \mathcal{H}$ with $\tilde{\pi }(f)v=0$.

The reverse implication is obvious: if there exists a unitary representation $\pi $ of $\Gamma $ on a Hilbert space $\mathcal{H}$ and a unit vector $v\in \mathcal{H}$ with $\tilde{\pi }(f)v=0$, then the equations \eqref{eq:state} -- \eqref{eq:state3} define a normalised positive definite element $w\in \ell ^\infty (\Gamma ,\mathbb{C})$ with $(g,w)=0$ for every $g\in \mathbb{Z}[\Gamma ]f$ (cf. Corollary \ref{c:expa} (1)).
	\end{proof}

We illustrate Theorem \ref{t:ER} with two examples.

	\begin{exam}[\protect{\cite[Theorem 6.5]{Schmidt}}]
	\label{e:Zd}
Put $\Gamma =\mathbb{Z}^d$ and identify $\mathbb{Z}[\mathbb{Z}^d]$ with the ring $R_d=\mathbb{Z}[u_1^{\pm1},\dots ,u_d^{\pm1}]$ of Laurent polynomials in $d$ variables with integral coefficients by viewing every $f = \sum_{\mathbf{n}\in\mathbb{Z}^d}f_\mathbf{n}\cdot \mathbf{n}\in \mathbb{Z}[\mathbb{Z}^d]$ as the Laurent polynomial $\sum_{\mathbf{n}\in\mathbb{Z}^d}f_\mathbf{n} u^\mathbf{n}\in R_d$ with $u^\mathbf{n}=u_1^{n_1}\cdots u_d^{n_d}$ for every $\mathbf{n}=(n_1,\dots ,n_d)$. Denote by
	\begin{gather*}
\mathsf{V}(f)=\{(z_1,\dots,z_d)\in (\C^\times )^d :f(z_1,\dots,z_d)=0\},
	\\
\mathsf{U}(f) =\{(z_1,\dots,z_d)\in\text{V}(f):|z_1|=\dots=|z_d|=1\}.
	\end{gather*}
the \textit{complex} and \textit{unitary varieties} of $f$. Then $\alpha_f$ is expansive if and only if $\mathsf{U}(f)=\varnothing $. Note that this condition is equivalent to saying that there is no irreducible (and hence one-dimensional) unitary representation $\pi $ of $\mathbb{Z}^d$ on the Hilbert space $\mathcal{H}=\mathbb{C}$ satisfying that $\tilde{\pi }(f)1=0$.
	\end{exam}

	\begin{exam}[\protect{\cite[Lemma 8.3]{ER}}]
	\label{e:ER1}
Let $\Gamma $ be nilpotent, and let $f\in \Z[\Gamma ]$ have the properties that $0<f_{1_\Gamma }= \sum_{\gamma \in \Gamma \smallsetminus \{1_\Gamma \}}|f_\gamma|$, and that $\textup{supp}(f)=\{\gamma \in \Gamma :f_\gamma \ne 0\}$ generates $\Gamma $. \textit{Then $\alpha _f$ is nonexpansive if and only if there exists a homomorphism $\chi \colon \Gamma \longrightarrow \mathbb{S}=\{c\in \mathbb{C}:|c|=1\}$ with $\chi (\gamma )=-\textup{sgn}(f_\gamma )$ for every $\gamma $ in $\textup{supp}(f)\smallsetminus \{1_{\h}\}$, where $\textup{sgn}(f_\gamma )$ is the sign of $f_\gamma $.}

Indeed, if $\alpha _f$ is nonexpansive, then Theorem \ref{t:ER} implies the existence of a unitary representation $\pi $ of $\Gamma $ on a Hilbert space $\mathcal{H}$ and of a unit vector $v\in \mathcal{H}$ with $\tilde{\pi }(f)v=0$. In view of the special form of $f$ this means that $v=\sum_{\gamma \in \Gamma \smallsetminus \{1_\Gamma \}}-(f_\gamma /f_{1_\Gamma })\pi (\gamma )v$. Since the unit ball in a Hilbert space is strictly convex, it follows that $\pi (\gamma )v=-\textup{sgn}(f_\gamma )v$ for every $\gamma \in \textup{supp}(f)\smallsetminus \{1_{\h}\}$.

As the one-dimensional subspace $V=\{tv:t\in\mathbb{C}\}\subset \mathcal{H}$ is invariant under $\pi $, there exists a group homomorphism $\chi \colon \Gamma \longrightarrow \mathbb{S}$ with $\pi (\gamma )v=\chi (\gamma )v$ for every $\gamma \in \Gamma $ and $\tilde{\chi }(f)\coloneqq \sum_{\gamma \in \Gamma }f_\gamma \chi (\gamma )=0$. The latter condition is equivalent to saying that $\chi (\gamma )=-\textup{sgn}(f_\gamma )$ for every $\gamma \in \textup{supp}(f)\smallsetminus \{1_{\h}\}$.

Conversely, if there exists a homomorphism $\chi \colon \Gamma \longrightarrow \mathbb{S}$ with $\tilde{\chi }(f)=0$, then $\alpha _f$ is nonexpansive by Theorem \ref{t:ER}.

Note that every homomorphism $\chi \colon \Gamma \longrightarrow \mathbb{S}$ satisfies that $\chi \equiv 1$ on $[\Gamma ,\Gamma ]$, the commutator subgroup of $\Gamma $. If $\Gamma =\h$, the discrete Heisenberg group in Subsection \ref{ss:heisenberg} with the generators $x,y,z$ defined there, then $[\Gamma ,\Gamma ]=\{z^n:n\in \mathbb{Z}\}$.

To illustrate this result we consider any polynomial $f\in \mathbb{Z}[\h]$ of the form $f=|a|+|b|+|c| + a\cdot x + b\cdot y +c\cdot z=(|a|+|b|+|c|)\cdot 1_{\h} + a\cdot x + b\cdot y +c\cdot z$ with $ab\ne0$. Then $\alpha _f$ is expansive and if only if $c>0$ (\cite[Example 8.4]{ER}).
	\end{exam}

\subsection{Expansiveness of actions of the discrete Heisenberg group}\label{ss:cocycle}

In \cite{LS2} the question was raised whether there is an effective characterisation of expansiveness for principal algebraic actions of a nonabelian group $\Gamma $. A natural first step in trying to answer this question is to look at nilpotent groups, with the discrete Heisenberg group $\h$ in Subsection \ref{ss:heisenberg} as the primary example.

Assume therefore that $\Gamma =\h$, and denote by $x,y,z\in \h$ the three noncommuting variables defined in Subsection \ref{ss:heisenberg}. We write a typical element $x^{k_1}y^{k_2}z^{k_3}\in \h$ as $[k_1,k_2,k_3]$. Then the $\h$-actions $\tilde{\lambda }$ and $\tilde{\rho }$ on $\ell ^\infty (\h,\mathbb{C})$ in Subsection \ref{ss:notation} take the form
	\begin{equation}
	\label{eq:actionH}
	\begin{gathered}
(\tilde{\lambda }^{[m_1,m_2,m_3]}w)_{[n_1,n_2,n_3]}=w_{[n_1-m_1,n_2-m_2,n_3-m_3-m_1m_2+m_2n_1]},
	\\
(\tilde{\rho }^{[m_1,m_2,m_3]}w)_{[n_1,n_2,n_3]}= w_{[n_1+m_1,n_2+m_2,n_3+m_3-m_1n_2]}
	\end{gathered}
	\end{equation}
for every $w\in\ell ^\infty (\h,\mathbb{C})$ and $[m_1,m_2,m_3],[n_1,n_2,n_3]\in\h$, and every $f\in \mathbb{Z}[\h]$ is written as
	\begin{equation}
	\label{eq:f}
f=\sum\nolimits_{(k_1,k_2,k_3)\in\mathbb{Z}^3} f_{(k_1,k_2,k_3)}[k_1,k_2,k_3]
	\end{equation}
with $f_{(k_1,k_2,k_3)}=0$ for all but finitely many $(k_1,k_2,k_3)\in \mathbb{Z}^3$.

A `cocycle' method for analysing the expansiveness of $\alpha _f$ for certain elements $f \in \Z[\h]$ was proposed in \cite{LS2}. We mention this method, since it provides us in some special cases with a verification of the approach developed in the next section.

Although the cocycle approach from \cite{LS2} works for much more general $f \in \Z[\h]$, it seems to be particularly useful for elements $f \in \Z [\h]$ which are `linear' in either of the variables $x$ or in $y$.

To be more specific, assume that $f\in\mathbb{Z}[\h]$ is of the form
	\begin{equation}
	\label{eq:form1}
f=g_1y+g_0
	\end{equation}
with $g_i\in \mathbb{Z}[x^{\pm1},z^{\pm1}]\cong R_2$ and
	\begin{equation}
	\label{eq:form2}
\mathsf{U}(g_0)=\mathsf{U}(g_1)=\varnothing .
	\end{equation}
Condition \eqref{eq:form2} is equivalent to assuming that the principal algebraic $\mathbb{Z}^2$-actions defined by the Laurent polynomials $g_0$ and $g_1$ are expansive. Under these assumptions one has the following characterisation of expansiveness for $\alpha _f$.

	\begin{theo}[\cite{LS2}]
	\label{t:LS2}
Let $f\in \Z[\h]$ be of the form \eqref{eq:form1} -- \eqref{eq:form2}. For every $\theta \in \mathbb{S}$ we define a continuous map $\phi _\theta \colon \mathbb{S}\longrightarrow \mathbb{R}$ by
	\begin{equation}
	\label{eq:phi}
\phi _\theta (\xi )=\log \bigl|\tfrac {g_0(\xi , \theta )}{ g_1(\xi , \theta )}\bigr|.\vspace{1mm}
	\end{equation}
Then $\alpha _f$ is nonexpansive if and only if at least one of the following conditions is satisfied.
	\begin{enumerate}
	\item
There exists a $\theta \in \mathbb{S}$ such that $\int \phi _\theta \,d\lambda =0$, where $\lambda $ is the normalised Lebesgue measure on $\mathbb{S}$;
	\item
There exist a $p\ge1$ and a $p$-th root of unity $\theta \in \mathbb{S}$ such that $\sum_{j=0}^{p-1}\phi _\theta (\xi \theta ^j)=0$ for some $\xi \in \mathbb{S}$.
	\end{enumerate}
	\end{theo}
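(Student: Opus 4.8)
The plan is to pass to the unitary dual of $\h$ and apply the nilpotent criterion of Theorem~\ref{t:ER}: $\alpha_f$ is nonexpansive precisely when some irreducible unitary representation $\pi$ of $\h$ admits a unit vector $v$ with $\tilde\pi(f)v=0$. I would first invoke the classification of the irreducible unitary representations of $\h$ by the scalar $\theta=\pi(z)\in\s$ (the image of the central generator $z$): if $\theta$ is a primitive $p$-th root of unity the representation is $p$-dimensional, while if $\theta$ is not a root of unity it is, by Stone--von Neumann, essentially the unique infinite-dimensional representation on $L^2(\s)$ in which $x$ acts as multiplication by the circle variable $\xi$, $z$ as the scalar $\theta$, and $y$ as a weighted rotation $(\pi(y)\psi)(\xi)=\eta\,\psi(\theta^{-1}\xi)$ with $|\eta|=1$. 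Since $g_0,g_1\in\mathbb Z[x^{\pm1},z^{\pm1}]$ involve only $x$ and $z$, in either case $\tilde\pi(g_i)$ is multiplication by the function $\xi\mapsto g_i(\xi,\theta)$, which is \emph{non-vanishing} on $\s$ exactly because $\mathsf U(g_0)=\mathsf U(g_1)=\varnothing$. Hence the kernel equation $\tilde\pi(f)\psi=0$ is equivalent to the first-order relation
\[
\psi(\theta^{-1}\xi)=-\eta^{-1}\,\frac{g_0(\xi,\theta)}{g_1(\xi,\theta)}\,\psi(\xi),
\]
whose modulus is the additive coboundary relation $\log|\psi(\theta^{-1}\xi)|-\log|\psi(\xi)|=\phi_\theta(\xi)$ over the rotation $\xi\mapsto\theta^{-1}\xi$.

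In the finite-dimensional case this rotation is periodic of period $p$, and running the recursion once around an orbit $\{\xi\theta^{j}:0\le j<p\}$ yields the compatibility condition
\[
(-\eta)^{p}\prod_{j=0}^{p-1}\frac{g_1(\xi\theta^{j},\theta)}{g_0(\xi\theta^{j},\theta)}=1.
\]
Taking absolute values annihilates the free phase $\eta$ and leaves exactly $\sum_{j=0}^{p-1}\phi_\theta(\xi\theta^{j})=0$, which is condition~(2). Conversely, whenever this finite sum vanishes one chooses the phase $\eta$ so that the cyclic product equals $1$ and then solves the recursion around the orbit to produce a genuine kernel vector; by Theorem~\ref{t:expa} the resulting non-invertibility of $\tilde\pi(f)$ forces nonexpansiveness. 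The one-dimensional characters ($\theta=1$) are the case $p=1$, recovering the situation of Example~\ref{e:Zd}.

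For the infinite-dimensional representations ($\theta$ irrational) I would argue through the spectrum rather than build an eigenvector, thereby sidestepping any small-divisor difficulty. Factoring $\tilde\pi(f)=M_{g_0}\,(I+\eta B)$, where $M_{g_0}$ denotes multiplication by $\xi\mapsto g_0(\xi,\theta)$ (invertible) and $(B\psi)(\xi)=\frac{g_1(\xi,\theta)}{g_0(\xi,\theta)}\psi(\theta^{-1}\xi)$ is the associated weighted composition (transfer) operator, one sees that $0\in\operatorname{spec}\tilde\pi(f)$ iff $-\eta^{-1}\in\operatorname{spec}(B)$. The weight has modulus $e^{-\phi_\theta}$, so $r(B)=\exp(-\int_{\s}\phi_\theta\,d\lambda)$ and $r(B^{-1})=\exp(\int_{\s}\phi_\theta\,d\lambda)$; when $\int_{\s}\phi_\theta\,d\lambda=0$ both spectral radii equal $1$, forcing $\operatorname{spec}(B)\subseteq\{|z|=1\}$. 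Conjugating $B$ by $M_{\xi^{n}}$ shows $B$ is unitarily equivalent to $\theta^{-n}B$, so $\operatorname{spec}(B)$ is invariant under the dense subgroup $\{\theta^{n}\}\subset\s$ and hence, being closed, equals the full unit circle; in particular $-\eta^{-1}\in\operatorname{spec}(B)$, $\tilde\pi(f)$ is non-invertible, and $\alpha_f$ is nonexpansive. For the reverse implication, a kernel vector $\psi\in L^{2}(\s)$ has $T$-invariant zero set, which is null by ergodicity (using $g_0/g_1\ne0$), so $\log|\psi|$ is a finite-a.e.\ coboundary for the continuous function $\phi_\theta$ over the uniquely ergodic rotation; since a continuous coboundary over an ergodic rotation has zero mean, $\int_{\s}\phi_\theta\,d\lambda=0$, i.e.\ condition~(1). (Condition~(1) at a rational $\theta$ is harmless: the average over $\xi$ of $\tfrac1p\sum_{j}\phi_\theta(\xi\theta^{j})$ is $\int_{\s}\phi_\theta\,d\lambda$, so a vanishing integral yields, by the intermediate value theorem, a $\xi$ satisfying condition~(2).)

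The main obstacle is the infinite-dimensional analysis: identifying $\operatorname{spec}(B)$ exactly. The inclusion $\operatorname{spec}(B)\subseteq\{|z|=1\}$ rests on the \emph{unique} ergodicity of irrational rotations (so that the upper and lower spectral radii of the transfer operator coincide), and filling out the entire circle requires the rotation-invariance trick together with the density of $\{\theta^{n}\}$. The complementary ergodic-theoretic input---that a continuous coboundary over an ergodic rotation integrates to zero---is what guarantees necessity. Everything else (the representation-theoretic classification, the reduction of the finite-dimensional case to a cyclic product, and the passage from non-invertibility of $\tilde\pi(f)$ to non-invertibility of $f$ in $\ell^{1}(\h,\R)$ via Theorem~\ref{t:expa}) is routine once these two facts are in place.
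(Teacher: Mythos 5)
Your ``if'' direction is correct and takes a genuinely different route from the paper, but your ``only if'' direction has a real gap. On the positive side: where the paper, to handle $\int\phi_\theta\,d\lambda=0$ with $c_\theta$ not a coboundary, must invoke topological transitivity of the skew product $(\xi,t)\mapsto(\xi\theta^{-1},t+\psi_\theta(\xi))$ together with results of Gottschalk--Hedlund, Atkinson and Besicovitch to manufacture a bounded element of $K_\infty(f)$, your spectral argument (both spectral radii of the weighted composition operator $B$ equal $1$ when the mean of $\phi_\theta$ vanishes, plus invariance of $\operatorname{spec}(B)$ under the dense rotation group $\{\theta^n\}$) shows directly that $\tilde\pi(f)$ is not invertible in one concrete unitary representation, which already rules out invertibility of $f$ in $\ell^1(\h,\C)$ and hence expansiveness by Theorem~\ref{t:expa}. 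The finite-dimensional case and the reduction of condition (1) at rational $\theta$ to condition (2) via the intermediate value theorem agree with the paper.

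The gap is exactly where you lean on Stone--von Neumann. The discrete Heisenberg group is not of type I (the paper stresses this in the introduction), and for $\theta$ not a root of unity the relevant quotient is the simple, non-type-I irrational rotation algebra: there are uncountably many pairwise inequivalent irreducible unitary representations with the same central character $\theta$, and they are emphatically not all realised on $L^2(\mathbb{S},\lambda)$ with $\pi(x)$ acting as multiplication by $\xi$. Stone--von Neumann governs the real Heisenberg group (the Weyl form of the CCR), not $\h$. Consequently the kernel vector supplied by Theorem~\ref{t:ER} need not live in the model where your coboundary relation $\log|\psi(\theta^{-1}\xi)|-\log|\psi(\xi)|=\phi_\theta(\xi)$ holds $\lambda$-a.e., and the step ``a measurable coboundary of a continuous function over the Lebesgue-ergodic rotation has zero mean'' has nothing to bite on. The repair avoids the classification entirely: from $\tilde\pi(f)v=0$ and $\mathsf{U}(g_0)=\mathsf{U}(g_1)=\varnothing$ one gets $\pi(y)v=-g_1(U,\theta)^{-1}g_0(U,\theta)v$ with $U=\pi(x)$; iterating via $\pi(y)U\pi(y)^{-1}=\theta^{-1}U$ and using $\|\pi(y)^nv\|=1$ bounds $\exp(c_\theta(n,\cdot))$ between its inf and sup over the support of the spectral measure of $U$ at $v$, and unique ergodicity makes the Birkhoff averages of $\phi_\theta$ converge \emph{uniformly on all of} $\mathbb{S}$, so the conclusion $\int\phi_\theta\,d\lambda=0$ is independent of that spectral measure. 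The paper sidesteps the issue differently: it never touches irreducible representations in this proof, but extracts from the nonzero weak$^*$-compact set $K_\infty(f)\subset\ell^\infty(\h,\C)$ a joint eigenvector for the abelian subgroup generated by $x$ and $z$, and runs the same uniform-ergodic-average argument on the resulting scalar cocycle.
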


	\begin{rema}
	\label{r:LS2}
Theorem \ref{t:LS2} has the following equivalent formulation. If $f\in \Z[\h]$ is of the form \eqref{eq:form1} -- \eqref{eq:form2}, then $\alpha _f$ is nonexpansive if and only if there exists a $\theta \in \mathbb{S}$ such that $\int \phi _\theta \,d\nu =0$ for some probability measure $\nu $ on $\mathbb{S}$ which is invariant and ergodic under the rotation $R_\theta \colon \xi \mapsto \xi \theta ,\;\xi \in \mathbb{S}$.
	\end{rema}

	\begin{coro}
	\label{c:LS2}
Let $f\in \Z[\h]$ be of the form \eqref{eq:form1} -- \eqref{eq:form2}. If
	\begin{displaymath}
\iint \bigl(\log |g_0(\xi , \theta )| - \log |g_1(\xi , \theta )|\bigr)\,d\lambda (\xi )\,d\lambda (\theta )=0
	\end{displaymath}
then $\alpha _f$ is nonexpansive.
	\end{coro}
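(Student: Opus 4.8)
The plan is to derive the corollary from condition (1) of Theorem \ref{t:LS2} by an averaging argument combined with the intermediate value theorem. Writing $\phi_\theta(\xi)=\log|g_0(\xi,\theta)|-\log|g_1(\xi,\theta)|$ as in \eqref{eq:phi}, I set
\[
\Psi(\theta)=\int_{\mathbb{S}}\phi_\theta(\xi)\,d\lambda(\xi),
\]
so that, by Fubini's theorem, the hypothesis of the corollary reads simply $\int_{\mathbb{S}}\Psi(\theta)\,d\lambda(\theta)=0$. The goal is then to produce a single $\theta_0\in\mathbb{S}$ with $\Psi(\theta_0)=0$, i.e. $\int\phi_{\theta_0}\,d\lambda=0$; this is precisely condition (1) of Theorem \ref{t:LS2} and hence forces $\alpha_f$ to be nonexpansive.

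First I would establish that $\Psi$ is a \emph{continuous} function on $\mathbb{S}$. This is where assumption \eqref{eq:form2} is used: the conditions $\mathsf{U}(g_0)=\mathsf{U}(g_1)=\varnothing$ say exactly that neither $g_0$ nor $g_1$ vanishes on the unit torus $\{(\xi,\theta):|\xi|=|\theta|=1\}$. Since $g_0$ and $g_1$ are Laurent polynomials, they are continuous and nonvanishing on this compact set, so $\log|g_0|$ and $\log|g_1|$ are continuous and bounded there. Consequently $(\xi,\theta)\mapsto\phi_\theta(\xi)$ is jointly continuous and bounded on $\mathbb{S}\times\mathbb{S}$, and integrating out the $\xi$-variable yields a continuous function $\Psi$ of $\theta$ (by uniform continuity on the compact square, or equivalently by dominated convergence). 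In particular all the integrals in sight are finite.

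With continuity in hand the conclusion is a short topological argument. If $\Psi\equiv0$ there is nothing to prove, since then $\Psi(\theta_0)=0$ for every $\theta_0$. Otherwise $\Psi$ is continuous, not identically zero, and has mean zero $\int_{\mathbb{S}}\Psi\,d\lambda=0$; a nonzero continuous function that never changed sign would have nonzero integral, so $\Psi$ must assume both a strictly positive and a strictly negative value. Because the circle $\mathbb{S}$ is connected, the intermediate value theorem supplies a point $\theta_0\in\mathbb{S}$ with $\Psi(\theta_0)=0$, which is condition (1) of Theorem \ref{t:LS2}, completing the argument.

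I do not expect a genuine obstacle here: the only delicate point is ensuring that the integrals defining $\Psi$ are finite and depend continuously on $\theta$, and this is guaranteed by \eqref{eq:form2}. Without that hypothesis the functions $\log|g_i|$ could blow up at zeros of $g_i$ on the torus, so that $\Psi$ might fail to be continuous or even finite, and the averaging step would break down; this is precisely why the corollary is stated only for $f$ of the form \eqref{eq:form1}--\eqref{eq:form2}.
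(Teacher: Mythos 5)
Your proof is correct and is precisely the intended derivation: the paper states Corollary \ref{c:LS2} as an immediate consequence of Theorem \ref{t:LS2}\,(1), obtained exactly as you do by applying Fubini, noting that \eqref{eq:form2} makes $\theta\mapsto\int\phi_\theta\,d\lambda$ continuous on $\mathbb{S}$, and using the intermediate value theorem on the connected circle to find $\theta_0$ with $\int\phi_{\theta_0}\,d\lambda=0$. Your remark that condition (1) of Theorem \ref{t:LS2} covers both the root-of-unity and irrational cases of $\theta_0$ is consistent with the paper's proof of that theorem, so no case distinction is needed.
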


	\begin{coro}
	\label{c:LS3}
Let $\tilde{f}\in\mathbb{Z}[\h]$ be of the form $\tilde{f}=x\tilde{g}_1+\tilde{g}_0$ with $\tilde{g}_i\in \mathbb{Z}[y^{\pm1},z^{\pm1}]\cong R_2$ and $\mathsf{U}(\tilde{g}_0)=\mathsf{U}(\tilde{g}_1)=\varnothing $. If $\tilde{\phi }_\theta \colon \mathbb{S}\longrightarrow \mathbb{R}$ is defined by $\tilde{\phi }_\theta (\eta )=\log \bigl|\tfrac {\tilde{g}_0(\eta , \theta )}{\tilde{g}_1(\eta , \theta )}\bigr|$ for every $\theta ,\eta \in\mathbb{S}$, then $\alpha _{\tilde{f}}$ is nonexpansive if and only if $\tilde{\phi }_\theta $ satisfies the condition in Remark \ref{r:LS2}.
	\end{coro}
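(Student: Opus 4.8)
The plan is to deduce Corollary \ref{c:LS3} from Theorem \ref{t:LS2} by transporting everything through the order-two automorphism of $\h$ that interchanges the two noncentral generators. Concretely, I would introduce the map $\psi\colon\h\to\h$ determined on generators by $\psi(x)=y$ and $\psi(y)=x$. Applying $\psi$ to $z=xyx^{-1}y^{-1}$ gives $\psi(z)=yxy^{-1}x^{-1}=z^{-1}$, and one checks directly from the relations \eqref{eq:relations} that $\psi$ respects the defining relations of $\h$; since $\psi^2=\mathrm{id}$ on generators, $\psi$ is an involutive automorphism of $\h$. It extends $\Z$-linearly to a ring automorphism of $\Z[\h]$ and, since it merely permutes the canonical basis, to an isometric algebra automorphism of $\ell^1(\h,\R)$. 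In particular $\tilde f$ is invertible in $\ell^1(\h,\R)$ if and only if $\psi(\tilde f)$ is, so by Theorem \ref{t:expa} the systems $\alpha_{\tilde f}$ and $\alpha_{\psi(\tilde f)}$ are simultaneously expansive or nonexpansive.

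Next I would compute $\psi(\tilde f)$ and bring it into the shape required by Theorem \ref{t:LS2}. As $\tilde g_i\in\Z[y^{\pm1},z^{\pm1}]$, we get $\psi(\tilde g_i)=\tilde g_i(x,z^{-1})\in\Z[x^{\pm1},z^{\pm1}]$, whence $\psi(\tilde f)=y\,\tilde g_1(x,z^{-1})+\tilde g_0(x,z^{-1})$. Using $yxy^{-1}=xz^{-1}$ and $yzy^{-1}=z$ (from \eqref{eq:relations}) one has $y\,h(x,z)=h(xz^{-1},z)\,y$ for every $h\in\Z[x^{\pm1},z^{\pm1}]$, so that $\psi(\tilde f)=g_1y+g_0$ with $g_0=\tilde g_0(x,z^{-1})$ and $g_1=\tilde g_1(xz^{-1},z^{-1})$, both lying in $\Z[x^{\pm1},z^{\pm1}]$. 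Because the substitutions $(\eta,\theta)\mapsto(\eta,\theta^{-1})$ and $(\eta,\theta)\mapsto(\eta\theta,\theta^{-1})$ are homeomorphisms of the torus, the hypothesis $\mathsf{U}(\tilde g_0)=\mathsf{U}(\tilde g_1)=\varnothing$ is equivalent to $\mathsf{U}(g_0)=\mathsf{U}(g_1)=\varnothing$; thus $\psi(\tilde f)$ is of the form \eqref{eq:form1}--\eqref{eq:form2} and Theorem \ref{t:LS2}, in the formulation of Remark \ref{r:LS2}, applies to it.

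It then remains to match the two functions. Writing $\phi_\theta$ for the function \eqref{eq:phi} attached to $\psi(\tilde f)=g_1y+g_0$, substitution gives
$$
\phi_\theta(\xi)=\log|\tilde g_0(\xi,\theta^{-1})|-\log|\tilde g_1(\xi\theta^{-1},\theta^{-1})|,
$$
whereas $\tilde\phi_{\theta^{-1}}(\xi)=\log|\tilde g_0(\xi,\theta^{-1})|-\log|\tilde g_1(\xi,\theta^{-1})|$. Hence
$$
\phi_\theta-\tilde\phi_{\theta^{-1}}=u-u\circ R_{\theta^{-1}},\qquad u(\xi)=\log|\tilde g_1(\xi,\theta^{-1})|,
$$
a coboundary for the rotation $R_{\theta^{-1}}=R_\theta^{-1}$; here $u$ is continuous because $\mathsf{U}(\tilde g_1)=\varnothing$ keeps $\tilde g_1(\cdot,\theta^{-1})$ zero-free on $\mathbb{S}$. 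The crucial point is that this coboundary integrates to $0$ against any $R_\theta$-invariant probability measure $\nu$ (which is automatically $R_\theta^{-1}$-invariant), so $\int\phi_\theta\,d\nu=\int\tilde\phi_{\theta^{-1}}\,d\nu$. Since $R_\theta$-invariant ergodic measures are precisely the $R_{\theta^{-1}}$-invariant ergodic ones, I conclude that $\phi$ satisfies the criterion of Remark \ref{r:LS2} with rotation number $\theta$ if and only if $\tilde\phi$ satisfies it with rotation number $\theta^{-1}$. Combining this equivalence with the reduction of the first paragraph finishes the argument.

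The main obstacle I anticipate is exactly the argument-shift $\xi\mapsto\xi\theta^{-1}$ appearing in $g_1$. It is unavoidable: any automorphism that interchanges the linear variable must commute $y$ past $\Z[x^{\pm1},z^{\pm1}]$ and thereby twist the second factor. This is precisely why the clean statement is phrased through the rotation-invariant-measure criterion of Remark \ref{r:LS2}, in which the twist shows up as a dynamically invisible coboundary, rather than through the explicit conditions (1)--(2) of Theorem \ref{t:LS2}.
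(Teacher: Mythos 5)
Your proposal is correct and follows essentially the same route as the paper: the involutive automorphism $\psi$ you define is exactly the paper's $\tau$ (with $\tau(x)=y$, $\tau(y)=x$, $\tau(z)=z^{-1}$), and the reduction to Theorem \ref{t:LS2} via this automorphism is the paper's argument. You additionally make explicit two points the paper leaves implicit --- transferring expansiveness through $\ell^1$-invertibility rather than through conjugation of the actions, and the computation showing that $\phi_\theta$ and $\tilde\phi_{\theta^{-1}}$ differ by an $R_\theta$-coboundary (coming from commuting $y$ past $\Z[x^{\pm1},z^{\pm1}]$), which is precisely why the Remark \ref{r:LS2} criterion is the right invariant formulation.
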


Even if $f$ does not satisfy \eqref{eq:form2}, the method of Theorem \ref{t:LS2} may still be useful for proving nonexpansiveness of $\alpha _f$. We have the following corollary of the proof of Theorem \ref{t:LS2}.

	\begin{coro}
	\label{c:LS4}
Assume that $f\in\mathbb{Z[\h]}$ is of the form \eqref{eq:form1}, and that there exist a $\theta \in\mathbb{S}$ and a probability measure $\nu $ on $\mathbb{S}$ which is invariant and ergodic under the rotation $R_\theta \colon \mathbb{S}\longrightarrow \mathbb{S}$, such that the following conditions hold:
	\begin{enumerate}
	\item
$g_0(\xi ,\theta )g_1(\xi ,\theta )\ne0$ for every $\xi \in \textup{supp}(\nu )$ \textup{(}the support of $\nu $\textup{)};
	\item
$\int \phi _\theta \,d\nu =0$, where $\phi _\theta $ is given by \eqref{eq:phi}.
	\end{enumerate}
Then $\alpha _f$ is nonexpansive.

\smallskip The analogous statement holds in the setting of Corollary \ref{c:LS3}.
	\end{coro}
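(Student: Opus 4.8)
The plan is to reuse the representation-theoretic construction underlying the proof of Theorem~\ref{t:LS2}, observing that for the single implication claimed here the global hypothesis \eqref{eq:form2} can be replaced by the weaker condition~(1). Recall from Theorem~\ref{t:expa} that it suffices to show that $f$ fails to be invertible in $\ell^1(\h,\mathbb{R})$, equivalently in $\ell^1(\h,\mathbb{C})$; and for this it is enough to exhibit a single unitary representation $\pi$ of $\h$ in which $\tilde\pi(f)$ is not bounded below, since invertibility of $f$ in $\ell^1(\h,\mathbb{C})$ would force $\tilde\pi(f)$ to be invertible in $\mathcal{B}(\mathcal{H})$ for every $\pi$. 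Thus the whole argument reduces to producing, in a representation attached to the pair $(\theta,\nu)$, a sequence of unit vectors $\psi_k$ with $\|\tilde\pi(f)\psi_k\|\to 0$.

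First I would build this representation on $\mathcal{H}=L^2(\mathbb{S},\nu)$ by setting $\pi(z)=\theta\cdot\mathrm{Id}$, letting $\pi(x)$ be multiplication by the (unimodular) coordinate $\xi$, and letting $\pi(y)$ be the rotation $(\pi(y)\psi)(\xi)=\psi(\xi\theta^{-1})$. The operator $\pi(y)$ is unitary precisely because $\nu$ is $R_\theta$-invariant, and a direct check gives $\pi(x)\pi(y)=\theta\,\pi(y)\pi(x)=\pi(z)\pi(y)\pi(x)$, so that $\pi$ is a genuine unitary representation of $\h$. Since $g_0,g_1\in\mathbb{Z}[x^{\pm1},z^{\pm1}]$, the operators $\tilde\pi(g_i)$ are multiplication by $g_i(\xi,\theta)$, and $f=g_1y+g_0$ acts by
\[
(\tilde\pi(f)\psi)(\xi)=g_1(\xi,\theta)\,\psi(\xi\theta^{-1})+g_0(\xi,\theta)\,\psi(\xi).
\]
Hence a kernel vector of $\tilde\pi(f)$ is a solution, and a near-null vector an approximate solution, of the multiplicative cohomological equation over the rotation $R_\theta$ with cocycle $c(\xi)=-g_0(\xi,\theta)/g_1(\xi,\theta)$, whose modulus satisfies $\log|c|=\phi_\theta$ of \eqref{eq:phi}.

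Second, I would verify that conditions~(1) and~(2) supply exactly the two inputs that the construction in Theorem~\ref{t:LS2} consumes. Condition~(1) guarantees that $g_0(\cdot,\theta)$ and $g_1(\cdot,\theta)$ are nonvanishing on the compact set $\mathrm{supp}(\nu)$, so that $c$ is continuous and nowhere zero there and $\phi_\theta$ is continuous, hence bounded and in particular $\nu$-integrable, on $\mathrm{supp}(\nu)$; this is the role played by \eqref{eq:form2} in the original statement, where it was only needed to make $\phi_\theta$ globally well defined and to enumerate all ergodic invariant measures for the converse. Condition~(2) says that the additive cocycle $\phi_\theta=\log|c|$ has zero mean with respect to the ergodic measure $\nu$. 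By Atkinson's recurrence theorem this forces the Birkhoff sums $S_n\phi_\theta$ to return to every neighbourhood of $0$ along a subsequence for $\nu$-almost every point, which is exactly the recurrence used to manufacture the vectors $\psi_k$, supported on orbit segments along which the cocycle products $c_n$ stay close to modulus one. Feeding these two facts into the construction of Theorem~\ref{t:LS2} yields $\|\tilde\pi(f)\psi_k\|\to 0$, and the reduction of the first paragraph gives nonexpansiveness. When $\theta$ is a root of unity the orbits are finite, this step is elementary, and it produces an exact kernel vector in a finite-dimensional irreducible representation, matching Theorem~\ref{t:ER}.

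The hard part is precisely this passage from the zero-mean condition to a genuine sequence of almost-null vectors: for an irrationally rotating $\nu$ (for instance $\nu$ Lebesgue) the cohomological equation generally has no measurable solution, so one cannot expect an exact kernel vector and must instead convert Atkinson recurrence into an explicit tower-type construction controlling both the modulus and the phase of the cocycle products---the one nontrivial ingredient inherited from the proof of Theorem~\ref{t:LS2}. Finally, the analogous statement in the setting of Corollary~\ref{c:LS3} follows by transport of structure: the map $x\mapsto y,\ y\mapsto x,\ z\mapsto z^{-1}$ is an automorphism of $\h$ (since $[y,x]=z^{-1}$), it carries an element of the form \eqref{eq:form1} to one of the form $x\tilde g_1+\tilde g_0$ with $\tilde g_i\in\mathbb{Z}[y^{\pm1},z^{\pm1}]$ and $\phi_\theta$ to $\tilde\phi_\theta$, and it induces a topological conjugacy of the associated principal actions, so that expansiveness is preserved.
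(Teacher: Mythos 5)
Your overall strategy is genuinely different from the paper's, and it is worth saying what the paper actually does before locating the gap. The paper obtains this corollary by observing that the sufficiency half of the proof of Theorem \ref{t:LS2} never uses the global hypothesis \eqref{eq:form2}: it only needs $g_0(\cdot,\theta)$ and $g_1(\cdot,\theta)$ to be nonvanishing on the closed $R_\theta$-invariant set $\textup{supp}(\nu)$ (so that $\psi_\theta$ and the recursion $c_kg_0(\zeta\theta^{-k},\theta)+c_{k+1}g_1(\zeta\theta^{-k-1},\theta)=0$ make sense along orbits in $\textup{supp}(\nu)$), together with $\int\psi_\theta\,d\nu=0$. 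Running that argument verbatim (the root-of-unity case directly; the irrational case via the coboundary/Gottschalk--Hedlund--Atkinson/Besicovitch alternative, noting that ergodicity of $\nu$ under an irrational rotation forces $\nu=\lambda$) produces a point $\zeta\in\textup{supp}(\nu)$ with $c_\theta(n,\zeta)\le C$ for all $n\in\Z$, and the element $v$ of \eqref{eq:v} is then a nonzero bounded element of $K_\infty(f)$, so Theorem \ref{t:expa}(3) gives nonexpansiveness. The crucial feature is that only a \emph{one-sided} bound on the cocycle along a \emph{single} orbit is required.

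The gap in your version sits exactly at its central step. Having replaced the $\ell^\infty$ kernel criterion by the condition that $\tilde{\pi}(f)$ fail to be bounded below on $L^2(\mathbb{S},\nu)$ (the reduction itself, the definition and verification of $\pi$, and the transport of structure under $\tau$ for the Corollary \ref{c:LS3} case are all correct), you assert that the approximate null vectors $\psi_k$ are obtained by ``feeding Atkinson recurrence into the construction of Theorem \ref{t:LS2}.'' But that proof does not construct Hilbert-space approximate eigenvectors; it constructs one bounded element of $K_\infty(f)\subset\ell^\infty(\h,\C)$, and the estimate it supplies (one-sided boundedness of $c_\theta(n,\zeta)$ at a Besicovitch point) is not the estimate your construction consumes. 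To force $\|\tilde{\pi}(f)\psi_k\|/\|\psi_k\|\to0$ you would need Rokhlin towers of growing height $n_k$ over $(\textup{supp}(\nu),R_\theta,\nu)$, a level-by-level solution of the recursion on the tower, and a proof that the resulting Rayleigh quotient, which is of the order of $\bigl(1+e^{2S_{n_k}}\bigr)\big/\sum_{j<n_k}e^{2S_j}$, tends to zero; this requires both choosing $n_k$ at recurrence times of the Birkhoff sums and controlling the oscillation of an $n_k$-step Birkhoff sum of the continuous function $\phi_\theta$ across a base arc that must shrink with $n_k$. None of this is carried out --- you explicitly label it ``the hard part'' and defer it --- so what you have is a programme rather than a proof. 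The shortest repair is to drop the $L^2$ detour entirely and use the paper's $\ell^\infty$ element \eqref{eq:v}, whose boundedness is exactly what Besicovitch's theorem delivers.
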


The proofs of Theorem \ref{t:LS2} and Corollary \ref{c:LS3}, taken from \cite{LS2}, are included for convenience of the readers.

	\begin{proof}[Proof of Theorem \ref{t:LS2}]
For every $\theta \in \mathbb{S}$ we define $\psi _\theta \colon \mathbb{S}\longrightarrow \mathbb{R}$ by
	\begin{displaymath}
\psi _\theta (\xi )=\log \bigl|\tfrac {g_0(\xi , \theta )}{ g_1(\xi \theta ^{-1}, \theta )}\bigr|
	\end{displaymath}
and consider the map $c_\theta \colon \mathbb{Z}\times \mathbb{S}\longrightarrow \mathbb{R}$ given by
	\begin{equation}
	\label{cocycle}
c_\theta (n, \xi ) =
	\begin{cases}
\sum_{k=0}^{n-1}\psi _\theta (\xi \theta ^{-k})& \textup{for}\enspace n \ge 1,
	\\
0 & \textup{for} \enspace n=0
	\\
-c_\theta (-n,\xi \theta ^{-n})&\textup{for}\enspace n<0,
	\end{cases}
	\end{equation}
which satisfies the cocycle equation
	\begin{equation}
	\label{eq:cocycle2}
c_\theta (m,\xi \theta ^{-n})+ c_\theta (n,\xi )= c_\theta (m+n,\xi )
	\end{equation}
for all $m,n\in \mathbb{Z}$ and $\xi \in \mathbb{S}$.

Suppose that $\alpha _f$ is nonexpansive. By Theorem \ref{t:expa} (3), the $\tilde{\lambda }$-invariant subspace $K_\infty (f)\linebreak[0]=\{v\in \ell ^\infty (\h ,\mathbb{C}):\tilde{\rho }^fv=0\}$ is nonzero. By restricting $\tilde{\lambda }$ to the abelian subgroup $\Delta =\{x^kz^l:(k,l)\in \mathbb{Z}^2\}\subset \h $ we can apply the argument in \cite[Lemma 6.8]{Schmidt} to see that $K_\infty (f)$ contains a one-dimensional subspace $V$ which is invariant under the restriction of $\tilde{\lambda }$ to $\Delta $. In other words, there exist a $v\in K_\infty (f)$ and elements $\zeta ,\theta \in \mathbb{S}$ with $v_{1_{\h} }=1$, $\tilde{\lambda }^xv=\zeta ^{-1}v$, and $\tilde{\lambda }^zv=\theta ^{-1}v$. Equation \eqref{eq:actionH} shows that there exists an element $c\in\ell ^\infty (\mathbb{Z},\mathbb{R})$ such that $c_0=1$ and
	\begin{displaymath}
v_{[k_1,k_2,k_3]} = c_{k_2}\zeta ^{k_1}\theta ^{k_3}
	\end{displaymath}
for every $[k_1,k_2,k_3]\in \h$. Furthermore, if $h\in \mathbb{Z}[x^{\pm1},z^{\pm1}]\subset \mathbb{Z}[\h]$, then
	\begin{displaymath}
(\tilde{\rho }^hv)_{[k_1,k_2,k_3]}=h(\zeta \theta ^{-k_2},\theta )v_{[k_1,k_2,k_3]} = c_{k_2}h(\zeta \theta ^{-k_2},\theta )\zeta ^{k_1}\theta ^{k_3}
	\end{displaymath}
for all $[k_1,k_2,k_3]\in \h$. Since
	\begin{displaymath}
(\tilde{\rho }^{y^k}v)_{[k_1,k_2,k_3]}=v_{[k_1,k_2+k,k_3]}
	\end{displaymath}
for every $k\in \mathbb{Z}$, our hypothesis that $v\in K_\infty (f)$ yields that
	\begin{displaymath}
c_{k}g_0(\zeta \theta ^{-k},\theta )+c_{k+1}g_1(\zeta \theta ^{-k-1},\theta )=0\enspace \enspace \textup{for every}\enspace k\in \mathbb{Z}.
	\end{displaymath}
Hence
	\begin{equation}
	\label{eq:v}
v_{[k_1,k_2,k_3]} = (-1)^{k_2}e^{c_\theta (k_2,\zeta )}\zeta ^{k_1}\theta ^{k_3}
	\end{equation}
for every $(k_1,k_2,k_3)\in \mathbb{Z}^3$. Since $v\in \ell ^\infty (\h,\mathbb{R})$, there exists a constant $C\ge 0$ such that
	\begin{equation}
	\label{eq:bound}
c_\theta (n,\zeta )\le C
	\end{equation}
for every $n\in \mathbb{Z}$.

If $\theta ^p=1$ for some $p\ge1$, and if $a=c_\theta (p,\zeta )=\sum_{j=0}^{p-1}\psi _\theta (\zeta \theta ^{-j})$, then $c_\theta (kp,\zeta )=ka$ for every $k\in \mathbb{Z}$. As $c_\theta (n,\zeta )\le C$ for every $n\in \mathbb{Z}$ we obtain that $a=0$. Since
	\begin{displaymath}
0=\sum_{j=0}^{p-1}\psi _\theta (\zeta \theta ^{-j}) = \sum_{j=0}^{p-1}\log \bigl|\tfrac {g_0(\zeta \theta ^{-j}, \theta )}{ g_1(\zeta \theta ^{-j-1}, \theta )}\bigr| = \sum_{j=0}^{p-1}\log \bigl|\tfrac {g_0(\zeta \theta ^{-j}, \theta )}{ g_1(\zeta \theta ^{-j}, \theta )}\bigr| = \sum_{j=0}^{p-1}\phi _\theta (\zeta \theta ^{-j}) = \sum_{j=0}^{p-1}\phi _\theta (\zeta \theta ^j),
	\end{displaymath}
this proves (2).

Now suppose that $\theta $ in \eqref{eq:v} -- \eqref{eq:bound} is not a root of unity. We write $\xi \mapsto R_\theta \xi =\xi \theta$ for the rotation by $\theta $ on $\mathbb{S}$. Since $R_\theta $ is uniquely ergodic, the ergodic averages $\frac 1n \sum_{j=0}^{n-1}\psi _\theta \cdot \circ R_\theta ^{-j}$ converge uniformly to $\int \psi _\theta \,d\lambda $. If $\int \psi _\theta \,d\lambda \ne 0$ it follows that either $\lim_{n\to\infty }c_\theta (n,\xi )=\infty $ or $\lim_{n\to-\infty }c_\theta (n,\xi )=\infty $ \textit{uniformly in $\xi $}, in violation of \eqref{eq:bound}. This implies that $\int \phi _\theta \,d\lambda = \int \psi _\theta \,d\lambda =0$ and proves (1) in this special case.

Conversely, assume that $\theta $ is not a root of unity and that $\int \phi _\theta \,d\lambda = \int \psi _\theta \,d\lambda  =0$. If the cocycle $c_\theta $ is a coboundary, i.e., if there exists a continuous function $b\colon \mathbb{S}\longrightarrow \mathbb{R}$ such that $\psi _\theta (\xi )=b(\xi \theta ^{-1}) - b(\xi )$ for every $\xi \in \mathbb{S}$, then there obviously exists a constant $C\ge 0$ with $|c_\theta (n,\zeta )|<C$ for every $n\in \mathbb{Z}$ and every $\zeta \in \mathbb{S}$. The element $v$ defined by \eqref{eq:v} is thus bounded for every $\zeta \in \mathbb{S}$. Since it also lies in $K_\infty (f)$, $\alpha _f$ is nonexpansive.

If $c_\theta $ is not a coboundary, we set $Y=\mathbb{S}\times \mathbb{R}$ and define a skew-product transformation $S\colon Y\longrightarrow Y$ by setting $S(\xi ,t)=(\xi \theta ^{-1},t+\psi _\theta (\xi ))$ and hence $S^m(\xi ,t)=(\xi \theta ^{-m},t+c_\theta (m,\xi ))$ for every $m\in \mathbb{Z}$ and $(\xi ,t)\in Y$. Since $c_\theta $ is not a coboundary, but $\int \psi _\theta \,d\lambda =0$, the homeomorphism $S$ is topologically transitive by \cite[Theorems 14.11 and 14.13]{GH} or \cite[Theorems 1 and 2]{Atkinson}. By \cite[p. 38 f.]{Besicovitch} there exists a point $\xi \in \mathbb{S}$ such that the entire $S$-orbit of $(\xi ,0)$ is bounded above in the sense that there exists a constant $C\ge 0$ with $c_\theta (n,\zeta )\le C$ for every $n\in \mathbb{Z}$. Again we conclude that the point $v$ in \eqref{eq:v} lies in $K_\infty (f)$ and that $\alpha _f$ is therefore not expansive.

Finally, if $\int \phi _\theta \,d\lambda =\int \psi _\theta \,d\lambda =0$ for some $p$-th root of unity $\theta $, then the mean value theorem allows us to find a $\zeta \in \mathbb{S}$ with $c_\theta (p,\zeta )=0$. Then $(\zeta ,\theta )$ satisfies (2), so that $\alpha _f$ is nonexpansive. This completes the proof of the theorem.
	\end{proof}

	\begin{proof}[Proof of Corollary \ref{c:LS3}]
Consider the group automorphism $\tau \colon \h\longrightarrow \h$ satisfying $\tau (x)=y$, $\tau (y)=x$, and $\tau (z)=z^{-1}$. An element $f\in\mathbb{Z}[\h]$ satisfies the conditions \eqref{eq:form1} -- \eqref{eq:form2} in Theorem \ref{t:LS2} if and only if $\tilde{f}\coloneqq f\circ \tau $ satisfies the hypotheses of Corollary \ref{c:LS3}. For every $x\in X_f$ we define $\tilde{x}\in \mathbb{T}^{\h}$ by $\tilde{x}_\gamma =x_{\tau (\gamma )},\,\gamma \in \h$. The map $x\mapsto \tilde{x}$ sends $X_f$ to $X_{\tilde{f}}$ and intertwines the algebraic $\h$-actions $\alpha _f$ and $\tilde{\alpha }_{\tilde{f}}$, defined by $\tilde{\alpha }_{\tilde{f}}^\gamma =\alpha _{\tilde{f}}^{\tau (\gamma )}$. Then $\alpha _f$ is expansive if and only if the same is true for $\alpha _{\tilde{f}}$ (or, equivalently, for $\tilde{\alpha }_{\tilde{f}}$), and $f$ satisfies the condition in Remark \ref{r:LS2} if and only if $\tilde{f}$ does.
	\end{proof}

The method of Theorem \ref{t:LS2} could --- in principle --- be applied to an arbitrary nonzero $f\in \mathbb Z[\h]$. If
	$$
f=g_N(x,z)y^N+g_{N-1}(x,z)y^{N-1}+\ldots + g_1(x,z)y+g_0(x,z)
	$$
with $\mathsf{U}(g_0)=\mathsf{U}(g_N)=\varnothing $, then one can consider one-dimensional subspaces $V\subset K_\infty (f)$ which are invariant under the restriction of $\tilde{\lambda }$ to the subgroup $\Delta \subset \Gamma $ generated by $x$ and $z$, and express the growth rate of a nonzero element $v\in V$ in terms of a cocycle $M_\theta (n,\xi )$, which is a product of $N\times N$ matrices with polynomial entries in $\xi $ and $\theta $. However, matrix cocycles are considerably more difficult to analyse than scalar cocycles, so that this idea has not yet led to any significant progress for higher degree polynomials in $y$.

\section{Allan's local principle}\label{s:allan}

In this section we introduce Allan's local principle and use it to find an algebraic characterisation of invertibility of elements $\Z[\h]$ in the group algebra $\ell^1(\h,\C)$.

\subsection{General theory} Suppose that $\mathcal{A}$ is a unital Banach algebra with nontrivial center
	$$
C(\mathcal{A}) \coloneqq \bigl\{c \in \mathcal A \,:\, cb=bc \,\,\,\textup{for all }b \in \mathcal A\bigr\}.
	$$
For every Banach subalgebra $\mathcal{C}\subset C(\mathcal{A})$ we denote by $M_\mathcal{C}$ the space of maximal ideals of $\mathcal{C}$, equipped with its usual (compact) topology. For every ideal $m \in M_\mathcal{C}$ we denote by $\mathcal{J}_m$ the smallest closed two-sided ideal of $\mathcal{A}$ containing $m$ and consider the quotient map $\Phi_m\colon \mathcal{A}\longrightarrow \mathcal{A}/\mathcal{J}_m$ defined by $\Phi_m(a)= [a]_m \coloneqq a + \mathcal{J}_m$ for $a \in \mathcal{A}$. The quotient algebra $\mathcal{A}/\mathcal{J}_m$ is a unital Banach algebra with the norm
	\begin{displaymath}
\| [a]_m\|=\inf_{b\in \mathcal{J}_m}\|a+b\|_\mathcal{A},\enspace a\in \mathcal{A},
	\end{displaymath}
where $\|\cdot \|_\mathcal{A}$ is the norm on $\mathcal{A}$.

	\begin{theo}[Allan's local principle \protect{\cite{Allan}}]
	\label{t:allan}
Let $\mathcal{A}$ be a unital Banach algebra, and let $\mathcal{C}\subset \mathcal{A}$ be a closed central subalgebra which contains the identity element $1=1_\mathcal{A}$ of $\mathcal{A}$. An element $a \in \mathcal{A}$ is invertible in $\mathcal{A}$ if and only if $\Phi_m(a)$ is invertible in $\mathcal{A}/\mathcal{J}_m$ for every $m\in M_{\mathcal{C}}$.
	\end{theo}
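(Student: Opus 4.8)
The forward implication is immediate: each $\Phi_m\colon\mathcal A\to\mathcal A/\mathcal J_m$ is a unital algebra homomorphism, so if $a$ is invertible in $\mathcal A$ then $\Phi_m(a^{-1})$ is a two-sided inverse of $\Phi_m(a)$. The whole content lies in the converse, so assume from now on that $\Phi_m(a)$ is invertible for every $m\in M_\mathcal C$. Two structural facts drive everything. First, since every element of $m\subset\mathcal C$ is central, the two-sided ideal it generates is $\mathcal A m=m\mathcal A$, whence $\mathcal J_m=\overline{\mathcal A m}$. Second, for $c\in\mathcal C$ the character attached to $m$ takes a value $\widehat c(m)\in\mathbb C$ with $c-\widehat c(m)\cdot 1\in m$; hence $\Phi_m(c)=\widehat c(m)\cdot 1_{\mathcal A/\mathcal J_m}$, i.e. $\mathcal C$ acts by the scalar $\widehat c(m)$ in each fibre, and $m\mapsto\widehat c(m)$ is continuous on $M_\mathcal C$. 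Combining these with a Neumann-series estimate shows that a local inverse chosen at $m$ still works throughout a neighbourhood, so the set of $m$ at which $\Phi_m(a)$ is invertible is open; by hypothesis it is all of the compact space $M_\mathcal C$. One is then tempted to cover $M_\mathcal C$ by finitely many such neighbourhoods and glue the local inverses, but this requires a partition of unity subordinate to the cover inside $\mathcal C$, which need not exist because the Gelfand transform $\mathcal C\to C(M_\mathcal C)$ need not be surjective. I would therefore argue by contradiction instead.

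Suppose $a$ is not invertible. Replacing $\mathcal A$ by its opposite algebra if necessary (which leaves $\mathcal C$, $M_\mathcal C$, the ideals $\mathcal J_m$, and the hypothesis unchanged, while interchanging left and right inverses), I may assume $a$ is not left-invertible. Then $L\coloneqq\overline{\mathcal A a}$ is a proper closed left ideal, and by Hahn--Banach its annihilator $N\coloneqq L^\perp\subset\mathcal A^*$ is nonzero. The space $N$ is weak-$*$ closed and invariant under the adjoint multiplications $\varphi\mapsto c\cdot\varphi$, where $(c\cdot\varphi)(x)=\varphi(cx)$ for $c\in\mathcal C$; thus $N$ is a weak-$*$ closed $\mathcal C$-submodule of $\mathcal A^*$. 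The plan is to produce a maximal ideal $m_0\in M_\mathcal C$ and a nonzero $\varphi\in N$ with $c\cdot\varphi=\widehat c(m_0)\varphi$ for all $c\in\mathcal C$, equivalently $m_0\cdot\varphi=0$. Granting such a $\varphi$, it annihilates $\mathcal A a$ (because $\varphi\in L^\perp$) and also $\mathcal A m_0$ (because $\varphi(dc)=\varphi(cd)=(c\cdot\varphi)(d)=0$ for $c\in m_0$), hence by continuity it annihilates $\mathcal A a+\mathcal J_{m_0}$. A nonzero functional cannot annihilate a dense subspace, so the left ideal $\mathcal A a+\mathcal J_{m_0}$ is proper, which says precisely that $\Phi_{m_0}(a)$ is not left-invertible in $\mathcal A/\mathcal J_{m_0}$ --- contradicting the assumption that it is invertible.

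The heart of the argument, and the step I expect to be hardest, is producing the character eigenfunctional $\varphi$: a common eigenvector in $N$ for the commuting family $\{\,\varphi\mapsto c\cdot\varphi\,\}$ whose eigenvalues are prescribed by a single character $\widehat{\,\cdot\,}(m_0)$. I would obtain it by a minimality argument based on weak-$*$ compactness. To each nonzero $\varphi\in N$ attach its support $\mathrm{hull}(\mathrm{Ann}\,\varphi)=\{m\in M_\mathcal C:\mathrm{Ann}\,\varphi\subset m\}$, a nonempty closed subset of $M_\mathcal C$ (nonempty since the proper ideal $\mathrm{Ann}\,\varphi$ lies in some maximal ideal). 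Running Zorn's lemma over the nonzero weak-$*$ closed $\mathcal C$-submodules of $N$ ordered by reverse inclusion, with Banach--Alaoglu used to keep intersections along a chain nonzero, I would extract a minimal such submodule; its support is then a minimal nonempty closed set, and the fact that $\widehat{\mathcal C}$ separates the points of $M_\mathcal C$ together with the scalar action in each fibre forces this minimal support to be a single point $m_0$. Any nonzero $\varphi$ in the minimal submodule then satisfies $(c-\widehat c(m_0)\cdot 1)\varphi=0$ for every $c$, i.e. $m_0\varphi=0$, as required. The two delicate points to secure are that weak-$*$ limits along the Zorn chain remain nonzero (this is exactly where compactness of the dual unit ball enters) and that a minimal support is genuinely a singleton rather than a larger closed set carrying no eigenvector.
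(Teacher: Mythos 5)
Your forward implication and the overall architecture of the converse (argue by contradiction, produce a maximal ideal $m_0\in M_{\mathcal C}$ together with a nonzero functional annihilating $\mathcal A a+\mathcal J_{m_0}$, and conclude that $\Phi_{m_0}(a)$ is not left invertible) are sound and run parallel to the paper's sketch. The gap is in the step you yourself identify as the heart of the argument: the production of the character eigenfunctional. First, the Zorn's lemma step does not go through as described. A decreasing chain of nonzero weak-$*$ closed subspaces of a dual Banach space can have intersection $\{0\}$ (take $N_k=\{\varphi\in\ell^\infty : \varphi_1=\dots=\varphi_k=0\}$ in $\ell^\infty=(\ell^1)^*$), and Banach--Alaoglu does not rescue this: the set $\{\varphi:\|\varphi\|\ge\varepsilon\}$ is \emph{not} weak-$*$ closed in infinite dimensions (there are nets of norm-one functionals converging weak-$*$ to $0$), so you cannot trap a nonzero element in the intersection by compactness. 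Under the duality $N\mapsto N_\perp$, your weak-$*$ closed $\mathcal C$-submodules of $\mathcal A^*$ correspond to norm-closed $\mathcal C$-submodules of $\mathcal A$, and an increasing union of proper closed $\mathcal C$-submodules can perfectly well be dense, because such submodules are not left ideals of $\mathcal A$ and may contain invertible elements. Second, even granting a minimal submodule, the passage from ``the hull of its annihilator is a singleton $\{m_0\}$'' to ``$m_0$ actually annihilates it'' is a spectral-synthesis assertion that fails for general commutative Banach algebras (primary ideals need not be maximal), and the separation property of $\widehat{\mathcal C}$ does not supply it. Note also that the existence of your eigenfunctional $\varphi$ is \emph{equivalent} to the properness of $\mathcal A a+\mathcal J_{m_0}$, i.e.\ to the statement being proved, so nothing short of a complete argument here will do.

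Both difficulties evaporate if you run the maximality argument on the predual side, which is what the paper does: proper \emph{left ideals} of $\mathcal A$ avoid the open set of left-invertible elements, so unions of chains and closures of proper left ideals remain proper and Zorn yields a maximal left ideal $\mathcal M\supset\overline{\mathcal A a}$. The single nontrivial input is then that $m\coloneqq\mathcal M\cap\mathcal C$ is a \emph{maximal} ideal of $\mathcal C$ --- this is where Schur's lemma for the simple module $\mathcal A/\mathcal M$ and Gelfand--Mazur enter (the paper cites \cite[Proposition 2.2.1]{RSS} for this); after that, centrality of $m$ gives $\mathcal J_m\subset\mathcal M$ and the contradiction $1\in\mathcal M$ is immediate. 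Your functional $\varphi$ is then simply any nonzero element of $(\mathcal M)^\perp$, and the eigenvalue equation $c\cdot\varphi=\widehat c(m)\varphi$ you were after is exactly the dual formulation of the maximality of $\mathcal M\cap\mathcal C$. I recommend either restructuring your argument around left ideals in this way, or at least replacing the submodule Zorn step by an explicit appeal to that fact.
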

We refer to \protect{\cite[Proposition 2.2.1]{RSS}} for a modern treatment of localisation methods and for a complete proof of Theorem \ref{t:allan}.
	\begin{proof}[Sketch of proof]
If $a\in\mathcal{A}$ is left (or right) invertible in $\mathcal{A}$, then $\Phi_m(a)$ is obviously left (resp. right) invertible in $\mathcal{A}/\mathcal{J}_m$ for every $m\in M_{\mathcal{C}}$.

For the opposite direction one can argue by contradiction. Assume that $\Phi_m(a)$ is left invertible in $\mathcal{A}/\mathcal{J}_m$ for every $m\in M_{\mathcal{C}}$, but that $a$ is  not invertible in $\mathcal{A}$. Let $\mathcal{M}$ be a maximal left ideal containing the proper left ideal $\{b a \,:\, b\in \mathcal A\}$, and let $m = \mathcal M \cap \mathcal C$, which is a maximal ideal of $\mathcal C$. Since $\sum_{k=1}^{n} a_k m_k b_k = \sum_{k=1}^{n} a_k b_k m_k \in \mathcal{M}$ for $ a_k, b_k \in \mathcal A$ and $m_k \in m\subset \mathcal C$, we conclude that $\mathcal J _m \subset \mathcal M$.

By assumption, $\Phi_m(a)$ is left invertible in $\mathcal{A}/\mathcal{J}_m$, i.e., there exists an element $b\in \mathcal A$ with $ba-1\in \mathcal J _m$. We now have a violation of the fact that $\mathcal M$ is a maximal left ideal: by definition of $\mathcal M$, $b a \in \mathcal M$; on the other hand, $ba-1 \in \mathcal M$ and hence $1 \in \mathcal M$.
	\end{proof}

\subsection{The commutative case}

In the commutative setting Allan's local principle reduces to a familiar criterion for invertibility. We reformulate the following well-known result from commutative Gelfand's theory in terms of Allan's local principle:

	\begin{coro}
	\label{c:commutative}
Let $\mathcal C$ be a unital commutative Banach algebra. An element $a\in \mathcal C$ is invertible in $\mathcal C$ if and only if $a \notin m$ for all $m \in M_\mathcal{C}$.
	\end{coro}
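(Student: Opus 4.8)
The plan is to apply Allan's local principle (Theorem~\ref{t:allan}) with the choice $\mathcal{A}=\mathcal{C}$. Since $\mathcal{C}$ is commutative, its center $C(\mathcal{C})$ equals $\mathcal{C}$, so $\mathcal{C}$ is itself a closed central subalgebra of $\mathcal{A}=\mathcal{C}$ containing the identity, and Theorem~\ref{t:allan} is applicable with $\mathcal{C}$ playing the role of the central subalgebra. With this identification the maximal ideal space in the statement is exactly the space $M_{\mathcal{C}}$ appearing in Allan's principle, so the two formulations refer to the same objects.

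The first step is to compute the closed two-sided ideals $\mathcal{J}_m$ for $m\in M_{\mathcal{C}}$. As $\mathcal{A}=\mathcal{C}$ is commutative, each $m$ is already a two-sided ideal, so the only point to check is that $m$ is closed; granting this, $\mathcal{J}_m=m$. Closedness follows from the standard fact that in a unital Banach algebra the invertible elements form an open set: the closure $\overline{m}$ of the proper ideal $m$ cannot contain an open ball about $1_{\mathcal{C}}$, hence cannot contain $1_{\mathcal{C}}$, and is therefore again a proper ideal containing $m$; by maximality $\overline{m}=m$. Consequently the quotient appearing in Allan's principle is $\mathcal{A}/\mathcal{J}_m=\mathcal{C}/m$.

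It then remains to describe invertibility in $\mathcal{C}/m$. Since $m$ is a maximal ideal of the commutative unital ring $\mathcal{C}$, the quotient $\mathcal{C}/m$ is a field, so every nonzero element of $\mathcal{C}/m$ is invertible; note that this purely algebraic observation already suffices and no appeal to Gelfand--Mazur is needed. Hence $\Phi_m(a)=a+m$ is invertible in $\mathcal{C}/m$ if and only if $a+m\neq 0$, i.e.\ if and only if $a\notin m$. Feeding this into Theorem~\ref{t:allan}, $a$ is invertible in $\mathcal{C}$ if and only if $\Phi_m(a)$ is invertible in $\mathcal{C}/m$ for every $m\in M_{\mathcal{C}}$, which is precisely the condition $a\notin m$ for all $m\in M_{\mathcal{C}}$. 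I expect the only genuinely nontrivial ingredient to be the identification $\mathcal{J}_m=m$, resting on the openness of the invertible group; everything else is a direct unwinding of the definitions together with the elementary fact that a maximal ideal yields a field in the quotient.
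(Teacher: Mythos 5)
Your proof is correct, and it follows the same overall strategy as the paper: specialise Allan's local principle to $\mathcal{A}=\mathcal{C}$ and identify $\mathcal{J}_m=m$. The one genuine difference is in the last step. The paper invokes the Gelfand--Mazur theorem to conclude $\mathcal{C}/m\simeq\mathbb{C}$ and then phrases the criterion in terms of multiplicative linear functionals, whereas you only use the purely algebraic fact that the quotient of a commutative unital ring by a maximal ideal is a field, in which invertibility is equivalent to being nonzero. Your route is more elementary and suffices for the corollary as literally stated; the paper's route buys the identification of the maps $\Phi_m$ with characters of $\mathcal{C}$, i.e.\ the Gelfand-transform picture, which it then uses to recover the concrete criterion that $a\in\ell^1(\mathbb{Z}^d,\mathbb{C})$ is invertible if and only if its Fourier transform is nowhere zero. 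You also supply the closedness argument for maximal ideals (via openness of the group of invertibles), a point the paper leaves implicit but which is needed for $\mathcal{C}/m$ to be the Banach-algebra quotient appearing in Theorem~\ref{t:allan}.
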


Since $\mathcal C$ is commutative we obtain the following simplifications: $\mathcal C=C(\mathcal C)$, $\mathcal J_m =m$ and $\mathcal C /m \simeq \C$ for every $m \in M_\mathcal{C}$ by the theorem of Gelfand-Mazur (\cite[Corollary 1.2.11]{RSS}). An element $a\in \mathcal C$ is invertible in $\mathcal C$ if and only if
	$$
\Phi_m(a)=a + m \ne m = \Phi_m (0)
	$$
for all maximal ideals $m \in M_\mathcal{C}$. Since the set $\{\Phi_m: m\in M_\mathcal{C}\}$ coincides with the set of multiplicative linear functionals from $\mathcal C$ to $\C$, Allan's local principle just says that $a \in \mathcal C$ is invertible if and only if there is no multiplicative functional vanishing at $a$. In particular, if $\mathcal C = \ell^1(\Z^d,\C)$, then $a \in \ell^1(\Z^d,\C)$ is invertible if and only if the Fourier-transform of $a$ does not vanish on $\mathbb{S}^d$.

\subsection{The group algebra $\ell^1(\h,\C)$}\label{ss:ZH} Next we use Allan's local principle for characterising invertibility for elements $f \in \ell^1(\h,\C)$. The center $C(\h)$ of the Heisenberg group is generated by $z$ and is isomorphic to the abelian group $(\Z,+)$. The space of maximal ideals of $\mathcal C (\ell^1(\h,\C))=\ell^1(C(\h),\C)$ is thus homeomorphic to $\widehat{\Z}\cong \mathbb{S}=\{\theta \in \mathbb{C}:|\theta |=1\}$ (cf. \cite{Folland}), where the maximal ideal corresponding to $\theta \in \mathbb{S}$ is given by
	$$
m_\theta \coloneqq \Bigl\{ f \in \ell^1 (\Z,\C): \hat{f}(\theta^{-1} ) = \sum\nolimits_{n \in \Z} f_n \theta ^{n}= 0\Bigr\}.
	$$

Before we state the main result of this section we have to determine the explicit form of the ideal $\mathcal J _\theta$ corresponding to $m_\theta$. Let us assume for the moment that $\mathcal A$ is a unital Banach algebra with a central $C^*$-subalgebra $\mathcal C$.
We know from \cite[Proposition 2.2.5]{RSS} that for any $m \in M_{\mathcal C}$, the two-sided closed ideal $\mathcal J _m$ containing $m$ is of the form
\begin{equation}\label{productofideals}
	\{ca \,:\, a\in\mathcal A \,\text{and}\, c\in m\}\,.
\end{equation}
Unfortunately, $\ell^1(\h,\C)$ does not possess a central $C^*$-subalgebra and we cannot expect $\mathcal J _\theta$ to be of the simple form of (\ref{productofideals}).

We write a typical element $f\in \ell^1(\h,\C)$ as
$$
	f=\sum_{(k,l,m)\in\Z^3} f_{(k,l,m)} x^ky^lz^m\,.
$$
Fix $\theta \in \s$. Let $\mathcal J _\theta$ be the subset of $\ell^1(\h,\C)$ which consists of all elements $f \in \ell^1(\h,\C)$ such that
	$$
		\sum_{(k,l,m)\in\Z^3} f_{(k,l,m)} x^ky^l\theta^m = 0_{\ell^1(\Z ^2,\C)}\,.
	$$
Equivalently, $\mathcal J _\theta$ is the set of $\ell^1$-summable formal series of the form
$$
	\sum_{(k,l)\in\Z^2} f_{(k,l)}(z) x^k y^l \,,
$$
where $f_{(k,l)}(z) \in m_\theta$.
The set $\mathcal J _\theta$ is a two-sided ideal since for all $f\in\ell^1(\h,\C)$ and $h\in\mathcal J _\theta$ the convolutions
\begin{equation*}
	 f \cdot h = \sum_{\gamma \in \h} f_\gamma \gamma \sum_{\gamma' \in \h} h_\gamma' \gamma'\quad\text{and}\quad
		h \cdot f = \sum_{\gamma \in \h} h_\gamma \gamma \sum_{\gamma' \in \h} f_\gamma' \gamma'
\end{equation*}
are just (infinite) linear combinations of elements in $\mathcal J_\theta$.
\begin{lemm}
	The set $\mathcal J _\theta$ is a closed subset of $\ell^1(\h,\C)$.
\end{lemm}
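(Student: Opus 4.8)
The plan is to exhibit $\mathcal J_\theta$ as the kernel of a bounded linear operator and then invoke the elementary fact that the kernel of a continuous map is closed. Concretely, I would introduce the \emph{evaluation map} $T_\theta\colon \ell^1(\h,\C)\longrightarrow \ell^1(\Z^2,\C)$ defined by
\[
T_\theta(f)=\sum_{(k,l)\in\Z^2}\Bigl(\sum_{m\in\Z}f_{(k,l,m)}\theta^m\Bigr)x^ky^l,
\]
where on the right-hand side $x,y$ now denote the commuting generators of $\Z^2$. This is precisely the map which substitutes $z\mapsto\theta$ coefficientwise, sending a series $\sum_{(k,l)}f_{(k,l)}(z)x^ky^l$ to $\sum_{(k,l)}\hat f_{(k,l)}(\theta^{-1})x^ky^l$. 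By the definition of $m_\theta$ and of $\mathcal J_\theta$ given above, an element $f$ lies in $\mathcal J_\theta$ if and only if every coefficient $\sum_m f_{(k,l,m)}\theta^m=\hat f_{(k,l)}(\theta^{-1})$ vanishes, i.e.\ exactly when $T_\theta(f)=0$. Hence $\mathcal J_\theta=\ker T_\theta$.

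The only point requiring verification is that $T_\theta$ is a well-defined \emph{bounded} operator, and this is where the hypothesis $\theta\in\s$ (that is, $|\theta|=1$) enters. First I would check that $T_\theta(f)$ genuinely lies in $\ell^1(\Z^2,\C)$: by the triangle inequality and $|\theta^m|=1$,
\[
\|T_\theta(f)\|_1=\sum_{(k,l)}\Bigl|\sum_{m}f_{(k,l,m)}\theta^m\Bigr|\le\sum_{(k,l,m)}|f_{(k,l,m)}|=\|f\|_1<\infty .
\]
The same estimate shows simultaneously that $T_\theta$ is bounded, with operator norm at most $1$; linearity is immediate from the defining formula. Thus $T_\theta$ is a continuous linear map between Banach spaces.

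With this in hand the conclusion is automatic: $\mathcal J_\theta=\ker T_\theta=T_\theta^{-1}\bigl(\{0_{\ell^1(\Z^2,\C)}\}\bigr)$ is the preimage of a single point under a continuous map, and is therefore closed in $\ell^1(\h,\C)$. I do not anticipate any genuine obstacle here; the one subtlety is confirming that coefficientwise evaluation at $\theta$ does not destroy $\ell^1$-summability, which is exactly what the estimate above secures using $|\theta|=1$. If one prefers to avoid naming the operator, the same estimate shows directly that each coefficient functional $f\mapsto\sum_m f_{(k,l,m)}\theta^m$ is $\|\cdot\|_1$-continuous, so for any convergent sequence $f^{(n)}\to f$ with $f^{(n)}\in\mathcal J_\theta$ every limit coefficient vanishes and hence $f\in\mathcal J_\theta$; the operator-kernel formulation merely packages this bookkeeping more cleanly.
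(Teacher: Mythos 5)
Your proof is correct and rests on the same estimate as the paper's: $\ell^1$-convergence in $\ell^1(\h,\C)$ controls each central-coefficient series $f_{(k,l)}(z)$, and $|\theta|=1$ makes evaluation at $\theta$ continuous. The paper phrases this sequentially (the projections $f\mapsto f_{(k,l)}(z)$ are $1$-Lipschitz into $\ell^1(\Z,\C)$, where $m_\theta$ is closed), while you package the same coefficient functionals into a single bounded operator $T_\theta$ and observe that $\mathcal J_\theta=\ker T_\theta$ --- a tidier formulation of essentially the same argument, as your own closing remark acknowledges.
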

\begin{proof}
	Let $f \in \ell^1(\h,\C)$ be an element in the closure of $\mathcal J _\theta$ and $(f^{(n)})$ any sequence such that each term $f^{(n)}\in \mathcal J _\theta$
	and $\lim_{n \to \infty} f^{(n)}= f$  with respect to the $\ell^1$-norm.
	We will show that $f_{k,l}(z)$ lies in $m_\theta$ for all $(k,l)\in \Z^2$
	and hence that $\mathcal J _\theta$ is closed.

	The convergence with respect to the $\ell^1$-norm implies the convergence of
	$(f^{(n)}_{(k,l)}(z))$ to $f_{k,l}(z)$ in $\ell^1(\Z,\C)$, because
	$$
		\|f^{(n)}_{(k,l)}(z)-f_{(k,l)}(z)\|_{\ell^1(\Z,\C)} \leq \|f^{(n)}-f\|_{\ell^1(\h,\C)}\,.
	$$
	The terms $f^{(n)}_{(k,l)}(z)$ are elements in $m_\theta$, for all $n\in\N$, by the definition of $\mathcal J_\theta$.
	Since $m_\theta$ is a closed ideal in
	$\ell^1(\Z,\C)$, we can conclude that $f_{k,l}(z)\in m_\theta$.
\end{proof}

Moreover, every element in the closed two-sided ideal $\mathcal J_\theta$ can be approximated by finite linear combinations of elements in $m_\theta \cdot \ell^1(\h,\C)$. Therefore, $\mathcal J _\theta$ is the smallest closed two-sided ideal which contains $m_\theta$.

We note the following corollary of Allan's local principle.
	\begin{coro}
	\label{c:allan}
An element $a \in \mathcal{A}=\ell^1(\h,\C)$ is invertible if and only if there exists, for every $\theta \in \mathbb{S}=M_{\mathcal C (\mathcal{A})}$, an element $b_\theta \in \mathcal{A}$, such that
	$$
(a +\mathcal{J}_\theta ) \cdot (b_\theta  +\mathcal{J}_\theta )= (1 +\mathcal{J}_\theta )
	$$
or, equivalently, if $(ab_\theta -1) \in \mathcal{J}_\theta $ or $1\in a \cdot \mathcal{A} + \mathcal{J}_\theta $.
	\end{coro}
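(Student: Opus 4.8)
The plan is to derive this corollary directly from Allan's local principle (Theorem~\ref{t:allan}) applied to the Banach algebra $\mathcal{A}=\ell^1(\h,\C)$ with central subalgebra $\mathcal{C}=\ell^1(C(\h),\C)$. We have already established that $M_{\mathcal C}$ is homeomorphic to $\mathbb{S}$ via $\theta\mapsto m_\theta$, and that $\mathcal{J}_\theta$ is precisely the smallest closed two-sided ideal containing $m_\theta$. Theorem~\ref{t:allan} then states that $a$ is invertible in $\mathcal{A}$ if and only if $\Phi_\theta(a)=a+\mathcal{J}_\theta$ is invertible in the quotient $\mathcal{A}/\mathcal{J}_\theta$ for every $\theta\in\mathbb{S}$. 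The corollary is just a restatement of this invertibility condition in the quotient, so the work consists of unwinding what invertibility of $\Phi_\theta(a)$ means and verifying the equivalence of the three reformulations.

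First I would observe that $\Phi_\theta(a)$ being invertible in $\mathcal{A}/\mathcal{J}_\theta$ means, by definition, that there is a class $\Phi_\theta(b_\theta)=b_\theta+\mathcal{J}_\theta$ with $\Phi_\theta(a)\Phi_\theta(b_\theta)=\Phi_\theta(1)$; since $\Phi_\theta$ is an algebra homomorphism this is exactly the first displayed equation $(a+\mathcal{J}_\theta)(b_\theta+\mathcal{J}_\theta)=(1+\mathcal{J}_\theta)$. Next, this identity in the quotient is equivalent to $ab_\theta-1\in\mathcal{J}_\theta$, which is the second reformulation. For the third, note that $ab_\theta-1\in\mathcal{J}_\theta$ for some $b_\theta\in\mathcal{A}$ is precisely the assertion that $1\in a\cdot\mathcal{A}+\mathcal{J}_\theta$, since $a\cdot\mathcal{A}=\{ab:b\in\mathcal{A}\}$.

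One technical point I would address is that the corollary as stated produces only a right inverse $b_\theta$ for $\Phi_\theta(a)$, whereas Allan's principle (in the form proved in the sketch) involves left invertibility in the quotient. The resolution is that invertibility of $a$ in $\mathcal{A}$ is equivalent to the two-sided invertibility of each $\Phi_\theta(a)$; however, by applying Theorem~\ref{t:allan} to $a$ one gets full invertibility and hence in particular a right inverse, while conversely the existence of right inverses $\Phi_\theta(b_\theta)$ for all $\theta$, together with the analogous left-inverse statement, yields invertibility. In fact, since the forward direction of Theorem~\ref{t:allan} gives genuine two-sided invertibility of each $\Phi_\theta(a)$ and the converse direction only requires one-sided invertibility for every $m$, the single-sided condition displayed in the corollary is sufficient, and this is exactly what the sketch of Theorem~\ref{t:allan} establishes.

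I do not expect any serious obstacle here: this is a bookkeeping corollary that translates the abstract invertibility criterion of Theorem~\ref{t:allan} into the concrete language of the ideals $\mathcal{J}_\theta$ computed in this subsection. The only point requiring a moment's care is the one just mentioned, namely confirming that the one-sided formulation $1\in a\cdot\mathcal{A}+\mathcal{J}_\theta$ is equivalent to genuine invertibility of $\Phi_\theta(a)$, which follows because the converse direction of Allan's principle needs only left (or right) invertibility of $\Phi_m(a)$ for all maximal ideals $m$. With that observed, the chain of equivalences among the three displayed conditions is immediate.
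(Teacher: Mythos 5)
Your proposal is correct and takes essentially the same route as the paper, which states this as an immediate consequence of Theorem~\ref{t:allan} once $M_{\mathcal C}$ is identified with $\mathbb{S}$ and $\mathcal{J}_\theta$ is identified as the smallest closed two-sided ideal containing $m_\theta$, offering no separate proof. Your extra remark on the one-sided formulation is the only point of substance, and it is handled correctly (and more explicitly than in the paper, which relies on the one-sided version of the localisation argument and, ultimately, on the fact -- used elsewhere in the paper via Kaplansky -- that a one-sided inverse in $\ell^1(\h,\C)$ is automatically two-sided).
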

We use the following notation for the quotient norm on $\mathcal{A}/ \mathcal{J}_\theta $:
	\begin{equation}
	\label{eq:defnorm}
\|a\|_\theta \coloneqq \inf_{b\in \mathcal J_\theta } \|a + b\|_1.
	\end{equation}
For $\theta =1$, $\|a\|_\theta $ is not necessarily equal to the $\ell^1$-norm of $a$ for $a \in \ell^1(\h,\C)$. Hence we write $\|\cdot \|_{\theta =1}$ for the quotient norm on $\mathcal{A}/ \mathcal{J}_1$ to avoid confusion.

\subsection{Examples} In this subsection we apply Allan's local principle to certain elements $f\in \mathbb{Z}[\h]$ and compare its effectiveness with that of other conditions for (non)invertibility of $f$ in $\ell ^1(\h,\mathbb{C})$ (and hence for (non)expansiveness of $\alpha _f$) in the Theorems \ref{t:ER} or \ref{t:LS2}.

Before we start our discussion of concrete examples we recall the following elementary lemma:
	\begin{lemm}
	\label{l:neumann}
Let $\mathcal A$ be a unital Banach algebra, and let $a \in \mathcal A$ with $\| a\| <1$. Then $1-a$ is invertible in $\mathcal A$.
	\end{lemm}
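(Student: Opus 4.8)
The plan is to prove the classical Neumann series lemma: for $a \in \mathcal{A}$ with $\|a\|<1$, the element $1-a$ is invertible. The key idea is to exhibit the inverse explicitly as the norm-convergent series $\sum_{n\ge 0}a^n$, using completeness of $\mathcal{A}$ as a Banach algebra and submultiplicativity of the norm.

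First I would show that the partial sums $s_N = \sum_{n=0}^{N} a^n$ form a Cauchy sequence. Since the norm on a Banach algebra is submultiplicative, $\|a^n\| \le \|a\|^n$, so for $M > N$ we have
	\begin{displaymath}
\Bigl\| \sum_{n=N+1}^{M} a^n \Bigr\| \le \sum_{n=N+1}^{M} \|a\|^n \le \sum_{n=N+1}^{\infty} \|a\|^n = \frac{\|a\|^{N+1}}{1-\|a\|},
	\end{displaymath}
which tends to $0$ as $N \to \infty$ precisely because $\|a\|<1$. Hence $(s_N)$ is Cauchy, and since $\mathcal{A}$ is complete, the series converges to some element $b = \sum_{n\ge 0}a^n \in \mathcal{A}$.

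Next I would verify that $b$ is a two-sided inverse of $1-a$. Multiplying the partial sum telescopes: $(1-a)s_N = s_N(1-a) = 1 - a^{N+1}$. Since $\|a^{N+1}\| \le \|a\|^{N+1} \to 0$, we have $a^{N+1}\to 0$, and passing to the limit (using continuity of multiplication in a Banach algebra) gives $(1-a)b = b(1-a) = 1$. Therefore $1-a$ is invertible with $(1-a)^{-1} = b$.

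There is no serious obstacle here; the only point requiring a modicum of care is the interchange of limits, namely that multiplication by the fixed element $1-a$ is continuous, so that $(1-a)b = \lim_N (1-a)s_N = \lim_N (1 - a^{N+1}) = 1$. This is immediate from the submultiplicativity bound $\|(1-a)(b - s_N)\| \le \|1-a\|\,\|b-s_N\|$ together with $s_N \to b$. With this the proof is complete.
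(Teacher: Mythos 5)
Your proof is correct and is the standard Neumann series argument; the paper states this lemma without proof as a recalled elementary fact, and your argument is exactly the canonical one it implicitly relies on (indeed the same geometric-series inverse $\frac{1}{h_1}\sum_{n\ge 0}(-1)^n {h'}^n$ appears explicitly in the proof of Theorem \ref{t:expa}). No gaps.
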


	\begin{exam}
	\label{e:ex1}
Let $f=3+x+y+z$.

	\begin{enumerate}
	\item
This example was studied in \cite[Example 8.4]{ER}, where it was shown that $\alpha _f$ is expansive. In Example \ref{e:ER1} we saw that expansiveness of $\alpha _f$ is equivalent to showing that $\tilde{\pi }(f)$ has trivial kernel for every irreducible unitary representation $\pi $ of $\h $, and that the latter condition only has to verified for all one-dimensional representations of $\h $. Since $\tilde{\chi }(f) = \sum_{\gamma \in \h }f_\gamma \chi (\gamma )\ne0$ for every homomorphism $\chi \colon \h \longrightarrow \mathbb{S}$ we obtain that $\alpha _f$ is indeed expansive.
	\item
In Subsection \ref{ss:3+x+y+z} in the Appendix we will check explicitly that 
$$w =\sum_{M=0}^\infty (-1)^M(x\linebreak[0]+y)^M ((3+z)^{-1})^{M+1}$$ 
lies in $\ell ^1(\h ,\mathbb{R})$ and is the inverse of $f$. Compared to the other approaches this method is rather complicated and needs nontrivial combinatorial results.
	\item
Let us now establish expansiveness of $\alpha_f$ by applying Allan's local principle, using the notation of Corollary \ref{c:allan} with $\mathcal{A}=\ell ^1(\h,\mathbb{C})$. Consider $\theta \in \s \smallsetminus \{-1\}$, and let $\Phi_\theta \colon \mathcal A\to \mathcal A / \mathcal J _\theta $ be the corresponding factor mapping. Clearly,
	$$
\Phi_\theta (3+x+y+z) = \Phi_\theta (3+\theta +x+y) .
	$$
The invertibility of $\Phi _\theta (3+\theta +x+y)$ in $\mathcal A / \mathcal J _\theta $ follows immediately from Lemma \ref{l:neumann}, since
	$$
\|3+\theta \|_\theta = |3+\theta | > 2 \ge  \| x+y \|_\theta .
	$$
For $\theta =-1$, Lemma \ref{l:neumann} does not apply directly, and this case has to be treated separately. Let us show that $\Phi_{-1}(3+x+y+z)$ is invertible as well. Indeed, $3+x+y$ is invertible in $\mathcal{A}$ by Lemma \ref{l:neumann}, and
	\begin{align*}
\Phi_{-1}(3+x+y+z) &= \Phi_{-1}(3+x+y-1)
	\\
&= \Phi_{-1}((3+x+y)(1-(3+x+y)^{-1})).
	\end{align*}
In the notation of \eqref{eq:q-binomial}, $\Phi_{-1}(3+x+y)$ has an inverse with norm
	\begin{align}
\| (3+x+y)^{-1}\|_{-1} &= \biggl\| \frac {1}{3}\sum_{n=0}^{\infty} \frac{1}{3^{n}} (-1)^n(x+y)^n\biggr\|_{-1} \nonumber
	\\ \label{e:comparenorms}
	&  \leq \frac {1}{3}\sum_{n=0}^{\infty} \frac{1}{3^{n}}\bigl\|(x+y)^n\bigr\|_{-1}	< \frac {1}{3}\sum_{n=0}^{\infty} \frac{1}{3^{n}}\bigl\|(x+y)^n\bigr\|_{1}
	\\
&\leq  \frac 1 3 \sum_{n=0}^\infty \left ( \frac 2 3 \right ) ^n =1	\nonumber
	\end{align}
We can thus apply Lemma \ref{l:neumann} in the quotient Banach algebra $\mathcal A / \mathcal J _{-1}$, furnished with the norm (\ref{eq:defnorm}).
One can easily see that the inequalities hold by comparing the term
$$
\|(x+y)^2\|_{-1}=\|(x^2+xy+yx+y^2)\|_{-1}=\|(x^2+y^2)\|_{-1}=2$$ with the term $\|(x+y)^2\|_{1}=4$ which appear in the inequality (\ref{e:comparenorms}).
	\item
The cocycle approach of Theorem \ref{t:LS2} again provides the easiest method to establish expansiveness: For every $(\xi ,\theta )\in \mathbb{S}^2$ we put $g_\theta (\xi )=\log(3+\xi +\theta )$. Then $\int g_\theta (\xi )\,d\nu (\xi ) \linebreak[0]\ne0$ for every probability measure $\nu $ on $\mathbb{S}$ which is invariant and ergodic under the rotation $R_\theta \colon \mathbb{S}\longrightarrow \mathbb{S}$. By Remark \ref{r:LS2} this proves that $\alpha _f$ is expansive.
	\end{enumerate}
	\end{exam}

	\begin{exam}
	\label{e:ex2}
Let $f= 3+x+y-z=3\cdot 1_{\h} +x+y-z$.
	\begin{enumerate}
	\item
In \cite{ER} it was shown that $\alpha _f$ is nonexpansive (see Example \ref{e:ER1}).
	\item
The formal inverse
	$$
w =\sum_{M=0}^\infty (-1)^M(x+y)^M ((3-z)^{-1})^{M+1}
	$$
of $f$ lies in $\ell^\infty(\h,\C)$. Since there is no alternating sign in the expansion of $(3-z)^{-1}$, $w$ cannot be summable, and so we conclude that $f$ is not invertible by Corollary \ref{c:expa} (2).
	\item
For the application of Allan's local principle the discussion is almost identical to that of Example \ref{e:ex1} (3), except that the roles of $+1$ and $-1$ are interchanged. For $\theta =1$ the analysis reduces to the abelian case, since $\Phi _1(\mathcal{A})$ is abelian. Here the Fourier transformation of $2+x+y$ vanishes on $\s^2$, i.e., $2+e^{2 \pi i s}+e^{2 \pi i t}=0$ for $s=t=1/2$. Therefore, $\Phi _{1}(3+x+y-z)$ is not invertible.

	\item
The cocycle approach: put $g_\theta (\xi )= \log (3+\xi-\theta )$, set $\theta =1$, and denote by $\nu $ the probability measure on $\mathbb{S}$ concentrated on $\xi =-1$. Then $\int g_1 d\nu =0$, and $\alpha _f$ is nonexpansive by Remark \ref{r:LS2}.
	\end{enumerate}
	\end{exam}

	\begin{exam}
	\label{e:ex4}
Consider polynomials of the form $f=|a|+|b|+|c| + a\cdot x+b\cdot y+c\cdot z\in\mathbb{Z}[\h]$ (cf. Example \ref{e:ER1}).
	\begin{enumerate}
	\item
If $c=0$ and $|a|+|b|\ne0$, then $\alpha _f$ is nonexpansive either by Theorem \ref{t:LS2} or by Corollary \ref{c:LS2}. If $c\ne0$, but $a=b=0$, then $\alpha _f$ is clearly nonexpansive. 
If $c\ne0$, $ab=0$, but $|a|+|b|>0$, then $\alpha _f$ is again nonexpansive either by Theorem \ref{t:LS2} or by Corollary \ref{c:LS2}.
	\item
If $c\ne0$ and $ab\ne0$, we can use Theorem \ref{t:LS2} to show that $f$ is invertible if and only if $c>0$ (cf. \cite[Example 8.4]{ER} and Example \ref{e:ER1}).
	\item
Now suppose that $c>0$, $ab\ne0$, and $|a|+|b|>2$. We apply Allan's local principle to show expansiveness.

For $\theta \in \s \smallsetminus \{-1\}$, Lemma \ref{l:neumann} shows that $\Phi_\theta (f)$ is invertible. For $\theta =-1$ we set $A\coloneqq |a|+|b|+\text{sign}(a)\cdot x+\text{sign}(b)\cdot y$ and $B\coloneqq (a-\text{sign}(a)) \cdot x + (b-\text{sign}(b)) \cdot y$, and write $\Phi_{-1}(f)=\Phi _{-1}(A+B)$ as
	\begin{align}
	\label{trick}
&\Phi_{-1}(|a|+|b|+a \cdot x +b \cdot y ) = \Phi_{-1}\left ( A(1+A^{-1}B) \right ).
	\end{align}
Clearly, $ \Phi_{-1}(A) $ is invertible and we have the norm-estimate
	\begin{equation}
	\label{trick2}
	\begin{aligned}
\| A^{-1}\|_{-1} & \leq \biggl\| \frac{1}{|a|+|b|}\sum_{n=0}^\infty \frac{1}{(|a|+|b|)^n}\sum_{k=0}^n \qbinomchi{n}{k} x^k y^{n-k} \biggr\|_{-1}
	\\
& < \frac{1}{|a|+|b|}\sum_{n=0}^\infty \frac{2^n}{(|a|+|b|)^n} = \frac{1}{|a|+|b|-2}.
	\end{aligned}
	\end{equation}
Furthermore,
	$$
\|B\|_{-1} \leq |a|+|b|-2
	$$
and one can apply Lemma \ref{l:neumann}, which completes the proof that $f$ is invertible in $\mathcal{A}=\ell ^1(\h,\mathbb{C})$.
	\end{enumerate}
	\end{exam}

	\begin{exam}
	\label{e:ex5}
The arguments used in Example \ref{e:ex4} (3) can be applied to more general $f\in \mathbb{Z}[\h]$ with $f_{1_{h}}=\sum_{1_{\h}\ne \gamma \in \h}|f_\gamma|$. 		\begin{enumerate}
	\item
Set $f=4+x+y+x^{-1}+z$. Again we can use Lemma \ref{l:neumann} for all $\theta \not = -1$. For $\theta =-1$ we use the same method as in (\ref{trick}) and (\ref{trick2}). First we write $3+x+y+x^{-1}$ as $(3+x+y)(1+(3+x+y)^{-1}x^{-1} )$. Since $3+x+y$ is invertible and the quotient-norm of the inverse is smaller than $1$ we can use Lemma \ref{l:neumann} to conclude that $3+x+y+x^{-1}$ is invertible. The same argument can be used to show that $5+x+y+x^{-1}+y^{-1}+z$ is invertible.
	\item
Let $f=3+x^2+y+z^2$, which is just a slight modification of the previous example. Again we can verify that $\Phi_\theta (f)$ is invertible for $\theta \in\s\smallsetminus\{-1,e^{\pi i/2}, e^{3 \pi i/2}\}$.

For $\theta =-1$,
	\begin{align*}
\Phi_{-1}(3+x^2+y+z^2)&=\Phi_{-1}(3+x^2+y+z^2 -(z+1)(z-1))
	\\
&=\Phi_{-1}(3+x^2+y+1)=\Phi_{-1}(4+x^2+y),
	\end{align*}
which is obviously invertible. Let $\chi \in \{e^{\pi i/2}, e^{3 \pi i/2}\} $, then $x^2$ and $y$ are anti-commuting and therefore we can use the same method as in example \ref{e:ex1} or (\ref{trick}) and (\ref{trick2}). We can now conclude that $3+x^2+y+z^2$ is invertible.
	\item
Let $f=3+xy+yx+z$. Since the invertibility of $\Phi_\theta (3+xy+yx+z)$ for $\theta \ne -1$ follows from Lemma \ref{l:neumann}, we only have to verify the invertibility of $\Phi_{-1}(3+xy+yx+z)$. Indeed,
	$$
\Phi_{-1}(3+xy+yx+z)=\Phi_{-1}(2+xy-xy)=\Phi_{-1}(2).
	$$
Therefore, the algebraic dynamical system $(X_f,\alpha_f)$ which is defined by $f$ is expansive.

Example \ref{e:ER1} provides another way of showing that $\alpha _f$ is expansive: for every group homomorphism $\chi \colon \h\longrightarrow \mathbb{S}$ we have that $\chi (z)=1$ and hence $\tilde{\chi }(f)=4-2\xi (x)\xi (y)\ne0$.
	\end{enumerate}
	\end{exam}

We continue with an application of Corollary \ref{c:LS4}.

	\begin{exam}
	\label{e:mistake}
Set $f=2+x+y+z$. In \cite[Example 10.6]{ER} it is stated (essentially without proof) that $\alpha_f$ is expansive. Here we show that this is not the case.

Put $\theta =-1$, and consider the $R_\theta $-invariant probability measure $\nu _\zeta $ on $\mathbb{S}$ which is equidistributed on the set $S_\zeta =\{\zeta ,-\zeta \}\subset \mathbb{S}$ with $\zeta \in \mathbb{S}\smallsetminus \{\pm 1\}$. Following the notation of \eqref{eq:form1} we write $f$ as $g_1y+g_0$ with $g_1(\xi ,\theta )=1$ and $g_0(\xi ,\theta )=2+\xi +\theta $. Then $g_0(\xi ,-1)\ne 0$ for $\xi \in S_\zeta $, and
	\begin{align*}
\int  |\phi _\theta | \,d\nu _\zeta &= \tfrac12\cdot (\log |g_0(\zeta ,-1)| + \log |g_0(-\zeta ,-1)|)
	\\
&= \tfrac12\cdot (\log |1+\zeta | + \log |1-\zeta |) = \tfrac12 \log |1-\zeta ^2|.
	\end{align*}
By setting $\zeta =e^{\frac{2\pi i}{12}}$ we obtain that $|1-\zeta ^2|=1$ and hence $\int  |\phi _\theta | \,d\nu _\zeta =\tfrac12 \log |1-\zeta ^2|=0$. According to Corollary \ref{c:LS4} this shows that $\alpha _f$ is nonexpansive.

	\end{exam}

	\begin{exam}
	\label{e:last}
For $f=3+x^2+y^2-z^4$ we shall find a $4$-dimensional unitary representation $\pi$ of $\h$ for which $\tilde{\pi }(f)$ has a nontrivial kernel. By Theorem \ref{t:ER}, this implies that $\alpha _f$ is nonexpansive.

Let $\theta = e^{2\pi i/4}$. The elements $x^2$ and $y^2$ commute in $\ell^1(\h,\C)/\mathcal{J}_\theta $: indeed, $x^2y^2 = y^2x^2z^4$, and $\Phi_\theta (z^4)=\Phi_\theta (1) =1$.

Put
	\begin{align}
\pi (x)=U=
\left(\begin{smallmatrix}
1\hphantom{^3}& 0\hphantom{^3} & 0\hphantom{^2} & 0\hphantom{^3} \\
0\hphantom{^3}& \theta\hphantom{^3}  &0\hphantom{^2}&0\hphantom{^3} \\
0\hphantom{^3}&0\hphantom{^3}&\theta ^2&0\hphantom{^3}\\
0\hphantom{^3} &0\hphantom{^3}& 0\hphantom{^2}&\theta ^3\\
\end{smallmatrix}\right),\qquad
\pi (y)=V=\left(\begin{smallmatrix}
0&1&0&0\\0&0&1&0\\0&0&0&1\\1&0&0&0
\end{smallmatrix}\right),\qquad
 \pi (z)=\theta I_4,
\end{align}
where $\theta =e^{2\pi i/4}$, and where $I_4$ is the $4\times 4$ identity matrix. This assignment defines an irreducible $4$-dimensional unitary representation $\pi$ of $\h$ which is trivial on the subgroup $Z(4)\coloneqq \{z^{4n}:n\in \mathbb{Z}\}\subset \h$.

The representation $\tilde{\pi }$ of $\ell ^1(\h,\mathbb{C})$ induced by $\pi $ has the property that $\det \tilde{\pi }(f) = \det ( 2 I_4 + U^2 +V^2)=0$.

In order to emphasise the connection with Allan's local principle we note that the triviality of $\pi $ on $Z(4)$ implies that $\tilde{\pi }$ vanishes on the ideal $\mathcal{J}_\theta $ and may thus be viewed as a representation of $\ell^1(\h,\C)/\mathcal{J}_\theta $. In this representation, $\tilde{\pi }(\Phi _\theta (f))=\tilde{\pi }(f)$ has a nontrivial kernel, which implies that $\Phi _\theta (f)$ is noninvertible in $\ell^1(\h,\C)/\mathcal{J}_\theta $. Hence $f$ is noninvertible in $\ell^1(\h,\C)$.
	\end{exam}

We end this section with a last application of Allan's local principle.
Let us write a typical element in $\Z[\h]$ as $f=\sum_{(m,n)\in\Z^2} f_{(m,n)}(z)x^m y^n$, where $f_{(m,n)}(z)$ are Laurent polynomials in $z$. Then Allan's
local principle in combination with Lemma \ref{l:neumann} implies the following lemma.

\begin{lemm}\label{l:AN}
	Assume that $f\in\Z[\h]$ and that for all $\theta\in \s$ there exists an element $(k,l)\in \Z^2$ such that
	$$|f_{(k,l)}(\theta)|> \sum_{(m,n)\in\Z^2\smallsetminus\{(k,l)\}} |f_{(m,n)}(\theta)|\,,$$ then $f$ is invertible in $\ell^1(\h,\C)$.
\end{lemm}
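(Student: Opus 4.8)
The plan is to verify the hypothesis of Corollary \ref{c:allan}: I will show that $\Phi_\theta(f)$ is invertible in $\mathcal{A}/\mathcal{J}_\theta$ for every $\theta\in\s$, which by Allan's local principle is equivalent to invertibility of $f$ in $\mathcal{A}=\ell^1(\h,\C)$. The whole point of localising is that $f$ itself need not be lopsided as an element of $\ell^1(\h,\C)$ (its coefficients are Laurent polynomials in $z$, not scalars); the dominance condition only holds after evaluating the central variable at each $\theta$, and this is exactly what passing to $\mathcal{A}/\mathcal{J}_\theta$ achieves.

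Fix $\theta\in\s$. The first step is to observe that reduction modulo $\mathcal{J}_\theta$ replaces $z$ by the scalar $\theta$. For each $(m,n)$ the Laurent polynomial $f_{(m,n)}(z)-f_{(m,n)}(\theta)$ vanishes at $z=\theta$ and therefore lies in $m_\theta$; hence $\bigl(f_{(m,n)}(z)-f_{(m,n)}(\theta)\bigr)x^my^n\in\mathcal{J}_\theta$ by the description of $\mathcal{J}_\theta$ given above. Since $f\in\Z[\h]$ is a finite sum, summing over $(m,n)$ gives $f-g_\theta\in\mathcal{J}_\theta$, where
\[
g_\theta=\sum_{(m,n)\in\Z^2}f_{(m,n)}(\theta)\,x^my^n\in\ell^1(\h,\C)
\]
has scalar coefficients. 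Consequently $\Phi_\theta(f)=\Phi_\theta(g_\theta)$.

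Next I use the dominance hypothesis. Writing $c_{(m,n)}=f_{(m,n)}(\theta)$ and noting that the elements $x^my^n=[m,n,0]$ are pairwise distinct group elements of $\h$, the element $g_\theta$ is \emph{lopsided}: the hypothesis provides $(k,l)$ with $|c_{(k,l)}|>\sum_{(m,n)\neq(k,l)}|c_{(m,n)}|$, so $c_{(k,l)}$ is its dominant coefficient. The lopsided-invertibility argument from the proof of Theorem \ref{t:expa} now applies verbatim, with complex rather than integer coefficients: writing $g_\theta=c_{(k,l)}x^ky^l\,(1+h')$ with $h'=(c_{(k,l)}x^ky^l)^{-1}(g_\theta-c_{(k,l)}x^ky^l)$, and using that left translation by a group element is an isometry permuting the canonical basis of $\ell^1(\h,\C)$, one gets $\|h'\|_1=|c_{(k,l)}|^{-1}\sum_{(m,n)\neq(k,l)}|c_{(m,n)}|<1$. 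By Lemma \ref{l:neumann} the element $1+h'$ is invertible in $\mathcal{A}$, and hence so is $g_\theta$.

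Finally, since $g_\theta$ is invertible in $\mathcal{A}$, its image $\Phi_\theta(g_\theta)=\Phi_\theta(f)$ is invertible in the quotient $\mathcal{A}/\mathcal{J}_\theta$; as $\theta\in\s$ was arbitrary, Corollary \ref{c:allan} yields invertibility of $f$ in $\ell^1(\h,\C)$. I expect no serious obstacle: the only points needing care are the reduction $\Phi_\theta(f)=\Phi_\theta(g_\theta)$, which rests on the explicit form of $\mathcal{J}_\theta$, and the observation that Allan's principle demands invertibility separately at each $\theta$, so that the dominant index $(k,l)$ is allowed to depend on $\theta$ and no uniform control over it is required. If one prefers to stay inside the quotient, one may instead apply Lemma \ref{l:neumann} directly in $\mathcal{A}/\mathcal{J}_\theta$, using $\|\Phi_\theta(h')\|_\theta\le\|h'\|_1<1$ and the fact that $\Phi_\theta(c_{(k,l)}x^ky^l)$ is a unit.
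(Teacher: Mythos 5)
Your proof is correct and follows exactly the route the paper intends: the paper gives no written proof of Lemma \ref{l:AN} beyond the remark that it follows from Allan's local principle combined with Lemma \ref{l:neumann}, and your argument is the natural elaboration of that one-liner (reduce modulo $\mathcal{J}_\theta$ to replace $z$ by $\theta$, observe that the resulting element with scalar coefficients is lopsided, and invert it by a Neumann series). The details you supply --- that $f-g_\theta\in\mathcal{J}_\theta$ by the explicit description of the ideal, that $c_{(k,l)}\neq0$ so $c_{(k,l)}x^ky^l$ is a unit, and that the dominant index may depend on $\theta$ --- are all the points that actually need checking, and they check out.
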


With the previous Lemma we can easily find examples which are not covered by Example \ref{e:ER1} nor the cocycle method which is treated in Theorem \ref{t:LS2}.
\begin{exam}
Let $f= g(z)+\gamma_1+\gamma_2+\gamma_3+\gamma_4$ with $\gamma_1,\gamma_2,\gamma_3,\gamma_4\in\h \smallsetminus \{z^k:k\in \Z \}$ such that $f$ is not linear in $x$ or $y$, where
$g(z)=5+z-z^{-1}+z^2-z^{-2}$.
Clearly, $f$ has no dominating coefficient and neither Example \ref{e:ER1} nor Theorem \ref{t:LS2} can be applied for this example.
For all $\theta \in \s$ the absolute value $|g(\theta)|>4= \|\gamma_1+\gamma_2+\gamma_3+\gamma_4\|_1$.
Hence, $f^\theta= g(\theta)+\gamma_1+\gamma_2+\gamma_3+\gamma_4$ has a dominating coefficient for all
$\theta \in \s$. We apply Lemma \ref{l:AN} to conclude that
$f^\theta$ is invertible in  $\ell^1(\h,\C) / \mathcal J _\theta$, for all
$\theta \in \s$.
\end{exam}

\begin{rema}
In all the examples presented here we are considering nonexpansive principal action $\alpha _f,\,f\in\mathbb{Z}[\h]$, for which $\tilde{\pi }(f)$ has nontrivial kernel for some finite-dimensional representation $\pi $ of $\h$. In general one may also have to consider infinite-dimensional irreducible unitary representations of $\h$ when applying Theorem \ref{t:ER}.
\end{rema}

	\section{Homoclinic points}
	\label{s:section4}

In this section we construct a summable homoclinic point of the nonexpansive $\h$-action $(X_f,\alpha_f)$ with $f= 2-x^{-1}-y^{-1} \in \Z[\h]$ by using a multiplier method first introduced in \cite{sv} and further studied in \cite{lsv,lsv2} for the $\Z^d$-case.

Let $h \in \Z[\h]$ be noninvertible in $\ell^1(\h,\mathbb{R})$, but assume that there exists an element $h^\sharp\in \R^{\h} \smallsetminus \ell^1(\h,\mathbb{R})$ with $h^\sharp\cdot h= h\cdot h^\sharp=1\in \mathbb{Z}[\h]$. By abusing terminology we say that $h$ has a formal  inverse $h^\sharp$ in $\R^{\h}$. 
%
%
A \textit{central multiplier} of $h^\sharp$ is a central element $g\in \Z[\h]$ such that $g \cdot h^\sharp=h^\sharp\cdot g\in \ell^1(\Gamma,\mathbb{R})$. The reason why we focus on \textit{central} multipliers in $\Z [\Gamma]$ in the noncommutative setting will become clear in the next section, where we construct coding maps.

If we are able to find a central multiplier $g$ of the formal inverse $f^\sharp$ of $f$, then the point $x^{(g)}=(x^{(g)}_\gamma )_{\gamma \in \h }\in \mathbb{T}^{\h}$, defined by
	\begin{equation}
	\label{eq:fundamental}
x_\gamma ^{(g)}= ((f^\sharp)^*\cdot g^*)_\gamma \;(\textup{mod}\, 1)\enspace \textup{for every}\enspace \gamma \in \h,
	\end{equation}
is an absolutely summable point in $X_f$. Hence this construction leads to a summable homoclinic point of $X_f$.

Let us recall some basic notions about homoclinic points.

	\begin{defi}
	\label{d:homoclinic}
Let $\alpha $ be an algebraic action of a countable group $\Gamma $ on a compact abelian group $X$. A point $x \in X$ is \textit{homoclinic} if $\alpha ^\gamma x \rightarrow 0_X$ as $\gamma  \rightarrow \infty$. The set of homoclinic points forms a group, which we denote by $\Delta _\alpha (X)$.
	\end{defi}
In \cite[Theorem 5.6]{cl} it was shown that if $(X_f, \alpha _f)$ is expansive, then the group $\Delta _{\alpha _f}(X_f)$ is equal to the group of \textit{summable} homoclinic points, i.e.\
	$$
\Delta^1 _{\alpha _f}(X_f) \coloneqq \biggl\{ x \in \Delta _{\alpha _f}(X_f) \,:\, \sum_{\gamma \in \Gamma} \dT x_\gamma\dT<\infty\biggr\},
	$$
where $\dT t\dT$ denotes the shortest distance of a point $t\in \mathbb{T}$ from $0$.

The elements of $\Delta^1 _{\alpha _f}(X_f)$ play an essential role for constructing symbolic covers of principal algebraic dynamical systems. However, if $(X_f,\alpha_f)$ is nonexpansive, then it is in general not clear whether $\Delta_{\alpha _f}^1(X_f)$ is trivial or not (cf. \cite{LS1}, \cite{lsv}, \cite{lsv2}).

It is easy to write a formal inverse ${f^*}^\sharp$ of $f^*= 2-x-y$ in terms of $q$-binomial coefficients:
	\begin{equation}
	\label{eq:fundsol}
{f^*}^\sharp= \sum_{n=0}^{\infty} \frac {1}{2^{n+1}} (x+y)^n = \sum_{n=0}^{\infty} \frac {1}{2^{n+1}} \sum_{k=0}^n x^k y^{n-k}\qbinom{n}{k}
	\end{equation}
with $q=z^{-1}$. Then ${f^*}^\sharp\in \ell^\infty(\h,\mathbb{R})$ and ${f^*}^\sharp\cdot f^* = f^* \cdot {f^*}^\sharp=1$.

Since $q$-binomials are polynomials in $q$ with nonnegative coefficients, multiplying with $h=(1-z^{-1})$ will reduce the norm. The reduction is not sufficient for $\ell^1$-summability of $h\cdot {f^*}^\sharp$. Nevertheless one can explicitly construct an element in $\Delta^1 _{\alpha _f}(X_f)$.

	\begin{theo}
	\label{t:mainmulti}
Let ${f^*}^\sharp$ be the formal inverse of $f^*=2-x-y$ given by \eqref{eq:fundsol}. Then $(1-z^{-1})^2\cdot {f^*}^\sharp \in\ell^1(\h,\mathbb{R})$. Hence the point $x^{((1-z)^2)}$ in \eqref{eq:fundamental} is a summable homoclinic point in $X_f$.
	\end{theo}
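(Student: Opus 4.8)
The plan is to turn the statement into a purely coefficientwise estimate on $q$-binomials and then to control the largest coefficient of $\qbinom{n}{k}$. Since $z$ is central, $(1-z^{-1})^2$ commutes with everything, and by the $q$-binomial theorem (with $q=z^{-1}$) the formal inverse \eqref{eq:fundsol} reads ${f^*}^\sharp=\sum_{n\ge 0}2^{-(n+1)}\sum_{k=0}^{n}x^{k}y^{n-k}\qbinom{n}{k}$. Because the normal form $x^{a}y^{b}z^{c}$ of an element of $\h$ is unique and the pair $(a,b)$ already determines $n=a+b$, the monomials $x^{k}y^{n-k}z^{-j}$ occurring for different triples $(n,k,j)$ are pairwise distinct, so there is no cancellation between distinct levels and
$$\|(1-z^{-1})^2\,{f^*}^\sharp\|_1=\sum_{n\ge 0}\frac{1}{2^{n+1}}\sum_{k=0}^{n}\bigl\|(1-q)^2\,\qbinom{n}{k}\bigr\|_1,$$
where on the right $\|\cdot\|_1$ now denotes the sum of absolute values of the coefficients of a polynomial in $q$. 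Thus it suffices to show that the double sum on the right is finite.

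Write $\qbinom{n}{k}=\sum_{j}c_j q^{j}$ and set $C(n,k)=\max_j c_j$. The coefficient sequence $(c_j)$ is nonnegative, symmetric and unimodal, so a telescoping computation gives the exact identity $\bigl\|(1-q)\,\qbinom{n}{k}\bigr\|_1=2\,C(n,k)$ (the total variation of a unimodal sequence with vanishing endpoints is twice its maximum). Using submultiplicativity of the coefficient $\ell^1$-norm and $\|1-q\|_1=2$,
$$\bigl\|(1-q)^2\,\qbinom{n}{k}\bigr\|_1\le \|1-q\|_1\,\bigl\|(1-q)\,\qbinom{n}{k}\bigr\|_1=4\,C(n,k).$$
Hence the quantity above is at most $\sum_{n}2^{1-n}\sum_{k=0}^{n}C(n,k)$, and the whole matter reduces to an estimate on the maximal $q$-binomial coefficient.

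The \emph{main obstacle} is to prove the sharp bound $C(n,k)\le A\,\binom{n}{k}\big/\sqrt{k(n-k)(n+1)}$ for an absolute constant $A$. Once this is available, the binomial weights $\binom{n}{k}$ concentrate near $k=n/2$, where $k(n-k)(n+1)\asymp n^{3}$, so $\sum_{k=0}^{n}C(n,k)\le A'\,2^{n}n^{-3/2}$ and therefore $\sum_{n}2^{1-n}\sum_{k}C(n,k)\le 2A'\sum_{n}n^{-3/2}<\infty$, giving $\ell^1$-membership. It is exactly the factor $\sqrt{n+1}$, equivalently the \emph{correct} variance $\sigma^2=\tfrac1{12}k(n-k)(n+1)$ of the area statistic whose generating function is $\qbinom{n}{k}$, that is decisive here: the naive guess $C(n,k)\lesssim \binom{n}{k}/\sqrt{k(n-k)}$ (treating the width of the support as the spread) only yields the divergent $\sum_n n^{-1}$, and unimodality together with the value of $\sigma^2$ is in itself not enough, since a symmetric unimodal sequence of prescribed variance may still concentrate. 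The cleanest route I see to the required bound is the Mahonian factorisation $[n]_q!=[k]_q!\,[n-k]_q!\,\qbinom{n}{k}$, which exhibits the area statistic as an independent factor of the inversion statistic on $S_n$; the latter is a sum of independent uniform variables on $\{0,\dots,i-1\}$, which yields $\sigma^2$ by additivity of variances and, via a local central limit theorem — or explicitly via $C(n,k)\le\frac{1}{2\pi}\int_{-\pi}^{\pi}\bigl|\qbinom{n}{k}(e^{i\phi})\bigr|\,d\phi$ combined with the product formula $\bigl|\qbinom{n}{k}(e^{i\phi})\bigr|=\prod_{i=1}^{k}\bigl|\sin((n-k+i)\phi/2)\bigr|\big/\bigl|\sin(i\phi/2)\bigr|$ and a Laplace estimate at $\phi=0$ — the bound $C(n,k)\lesssim \binom{n}{k}/\sigma$. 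This maximal-coefficient estimate is the technical heart of the argument, and is where the combinatorial work concentrates.

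Finally, once ${f^*}^\sharp(1-z^{-1})^2=(1-z^{-1})^2\,{f^*}^\sharp\in\ell^1(\h,\R)$ has been established, taking adjoints and using $f^\sharp=({f^*}^\sharp)^*$ shows that $g=(1-z)^2$ is a central multiplier of $f^\sharp$ in the required sense, since the involution is an $\ell^1$-isometry. The general construction \eqref{eq:fundamental} with this $g$ then produces the point $x^{((1-z)^2)}$, whose coordinates are the $(\textup{mod}\,1)$-reductions of ${f^*}^\sharp(1-z^{-1})^2$; by the discussion preceding Definition \ref{d:homoclinic} this is an absolutely summable element of $X_f$, and hence a summable homoclinic point of $(X_f,\alpha_f)$, as claimed.
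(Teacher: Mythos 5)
Your reduction to a coefficientwise estimate is correct and, up to that point, parallels the paper: by centrality of $z$ and uniqueness of normal forms in $\h$ there is indeed no cancellation between levels, the identity $\bigl\|(1-q)\qbinom{n}{k}\bigr\|_1=2\,C(n,k)$ follows from unimodality exactly as you say, and submultiplicativity reduces everything to bounding $\sum_{k}C(n,k)$. But the step you yourself call the ``technical heart'' --- the uniform bound $C(n,k)\le A\binom{n}{k}/\sqrt{k(n-k)(n+1)}$ --- is asserted, not proved, and this is a genuine gap: it amounts to a local central limit theorem for the Mann--Whitney/area statistic, \emph{uniform} in $k$ and $n$, and neither of the routes you sketch (Mahonian factorisation plus local CLT, or the bound $C(n,k)\le\frac{1}{2\pi}\int_{-\pi}^{\pi}\bigl|\qbinom{n}{k}(e^{i\phi})\bigr|\,d\phi$ with a Laplace estimate at $\phi=0$, which still requires controlling the sine-product away from the origin uniformly in $k$) is carried out. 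A warning sign that this single estimate carries the entire proof: if it holds as stated, then $\sum_kC(n,k)=\mathcal{O}(2^n n^{-3/2})$ and already \emph{one} factor of $(1-z^{-1})$ would make ${f^*}^\sharp$ summable, whereas the paper asserts (admittedly without proof) immediately before the theorem that one factor does not suffice. The estimate is plausible and consistent with small cases, but it cannot simply be invoked.

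The paper's own proof deliberately avoids any sharp analytic estimate by an arithmetic device. Using the Andrews/Brunetti--Del Lungo nonnegativity result and the cyclotomic factorisation of $\qbinom{n}{k}$, it shows that $\qbinom{n}{k}\frac{1-q}{1-q^p}$ is a polynomial with nonnegative coefficients whenever $p$ is a prime exceeding $k$ that divides $\binom{n}{k}$; the Sylvester--Schur theorem supplies such a prime for $n\ge2k$, which gives $\bigl\|(1-q)\qbinom{n}{k}\bigr\|_1\le\frac{2}{k}\binom{n}{k}$ and hence $\sum_k\bigl\|(1-q)\qbinom{n}{k}\bigr\|_1=\mathcal{O}(2^n\log n/n)$. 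That is too weak for a single factor of $(1-q)$ --- which is precisely why the paper then uses the $q$-Vandermonde identity to ``square'' the saving and obtain $\mathcal{O}(2^n\log^2 n/n^2)$ for the $(1-q)^2$ sums, first for even $n$ and then for odd $n$ via Pascal's rule. So your route is genuinely different and, if completed, sharper; but to make it a proof you must establish the maximal-coefficient bound (with explicit uniformity), or else fall back on the prime-divisor argument. Your final paragraph (adjoints, centrality of $g=(1-z)^2$, reduction mod $1$, summability of the resulting homoclinic point) is fine and matches the paper's setup.
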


In order to prove this theorem we will show that
	\begin{equation}
	\label{eq:estimate1}
\bigl \| {f^*}^\sharp\cdot (1-q)^2\bigr \| _1 \leq \sum_{n=0}^{\infty} \frac {1}{2^{n+1}} \biggl\| \sum_{k=0}^n x^k y^{n-k}\cdot (1-q)^2 \qbinom{n}{k} \biggr\|_1 < \infty
	\end{equation}
with $q= z^{-1}$. The key result we use for the proof of Theorem \ref{t:mainmulti} is the following estimate, which will be established in the appendix.
	\begin{theo}
	\label{MainEstimate}
Let $n >0$, then
	$$
\sum_{k=0}^n\,\biggl\|\qbinom{n}{k}\cdot(1-q)^2\biggr\|_1 = \mathcal{O}\Bigl( 2^n \frac{\log^2 (n)}{n^2}\Bigr).
	$$
	\end{theo}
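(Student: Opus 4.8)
The plan is to bound each summand $\bigl\|(1-q)^2\qbinom nk\bigr\|_1$ individually and then sum over $k$. Write $\qbinom nk=\sum_{i=0}^{D}c_i\,q^i$ with $D=k(n-k)$ and $m=n-k$; multiplication by $(1-q)^2$ replaces $(c_i)$ by its second difference, so that
\[
\Bigl\|(1-q)^2\qbinom nk\Bigr\|_1=\sum_i\bigl|c_i-2c_{i-1}+c_{i-2}\bigr|=\mathrm{TV}(\delta),
\]
the total variation of the first-difference sequence $\delta_i=c_i-c_{i-1}$ (with $c_i=0$ for $i\notin\{0,\dots,D\}$). I will use two elementary facts. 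First, by the symmetry and unimodality of $(c_i)$ the positive part of $\delta$ telescopes to $M_{n,k}:=\max_ic_i$, giving $\bigl\|(1-q)\qbinom nk\bigr\|_1=2M_{n,k}$; submultiplicativity of the $\ell^1$-norm then yields the crude estimate $\bigl\|(1-q)^2\qbinom nk\bigr\|_1\le\|1-q\|_1\,\bigl\|(1-q)\qbinom nk\bigr\|_1=4M_{n,k}$. This is already essentially sharp for the edge values of $k$, where $\qbinom nk$ is a slowly-varying ``staircase''; since $M_{n,k}\le\binom nk$ is only polynomial in $n$ once $k$ or $n-k$ is bounded, all such terms together contribute $o(2^n/n^2)$ and may be discarded. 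The entire difficulty lies in the bulk $\varepsilon n\le k\le(1-\varepsilon)n$, where $\binom nk$ is exponentially large and $4M_{n,k}$ is far too wasteful.

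In the bulk I would exploit that $(c_i)$ is smooth, so that $\mathrm{TV}(\delta)$ ought to be of order $M_{n,k}/\sigma_{n,k}$ rather than $M_{n,k}$, where $\sigma_{n,k}^2\asymp k(n-k)\,n$ is the variance of the size statistic on partitions inside the $k\times m$ box. The device that turns this heuristic into a bound without paying the naive factor $\sqrt D=\sqrt{k(n-k)}$ is a weighted Cauchy--Schwarz inequality. Writing $\mu=D/2$ and $\Delta_i=\delta_i-\delta_{i-1}$,
\[
\mathrm{TV}(\delta)=\sum_i|\Delta_i|\le\Bigl(\sum_i(1+|i-\mu|)^{-1}\Bigr)^{1/2}\Bigl(\sum_i(1+|i-\mu|)\,\Delta_i^{2}\Bigr)^{1/2}.
\]
The first factor is $O(\log D)=O(\log n)$ --- this single logarithm, rather than a power of $n$, is exactly what the weight buys. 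The second factor I would reduce to the two integer-order quantities $\sum_i\Delta_i^2=\|F\|_2^2$ and $\sum_i(i-\mu)^2\Delta_i^2$, where $F:=(1-q)^2\qbinom nk$, via $\sum_i|i-\mu|\Delta_i^2\le\bigl(\sum_i\Delta_i^2\bigr)^{1/2}\bigl(\sum_i(i-\mu)^2\Delta_i^2\bigr)^{1/2}$.

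Both are Fourier-side integrals via Parseval. With $q=e^{i\phi}$ one has
\[
\bigl|F(e^{i\phi})\bigr|^2=16\sin^4(\phi/2)\prod_{j=1}^{k}\frac{\sin^2\!\bigl((m+j)\phi/2\bigr)}{\sin^2\!\bigl(j\phi/2\bigr)},
\]
and $\sum_i(i-\mu)^2\Delta_i^2=\tfrac1{2\pi}\int_0^{2\pi}\bigl|\tfrac{d}{d\phi}\bigl(e^{-i\mu\phi}F(e^{i\phi})\bigr)\bigr|^2d\phi$. The factor $\sin^4(\phi/2)$ produced by $(1-q)^2$ cancels the apparent pole of the product at $\phi=0$, where $|\qbinom nk(e^{i\phi})|\to\binom nk$. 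The integral is governed by a neighbourhood of $\phi=0$, in which a quantitative local central limit theorem for partition counts in a box gives $|\qbinom nk(e^{i\phi})|\approx\binom nk\,e^{-\sigma_{n,k}^2\phi^2/2}$ on the scale $|\phi|\lesssim\sigma_{n,k}^{-1}$. This yields $\|F\|_2^2\asymp\binom nk^2\sigma_{n,k}^{-5}$ and $\sum_i(i-\mu)^2\Delta_i^2\asymp\binom nk^2\sigma_{n,k}^{-3}$, hence $\sum_i(1+|i-\mu|)\Delta_i^2\asymp\binom nk^2\sigma_{n,k}^{-4}$. Feeding this into the weighted Cauchy--Schwarz bound gives the per-$k$ estimate
\[
\Bigl\|(1-q)^2\qbinom nk\Bigr\|_1\le C\,(\log n)^{1/2}\,\frac{\binom nk}{\sigma_{n,k}^{2}}\asymp(\log n)^{1/2}\,\frac{\binom nk}{k(n-k)\,n}.
\]

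Summing over $k$ with $\frac1{k(n-k)}=\frac1n\bigl(\frac1k+\frac1{n-k}\bigr)$ and the concentration of $\binom nk$ near $k=n/2$ (which makes $\sum_k\binom nk/k=O(2^n/n)$) then gives
\[
\sum_{k=0}^n\Bigl\|(1-q)^2\qbinom nk\Bigr\|_1=O\!\Bigl(\frac{2^n(\log n)^{1/2}}{n^{3}}\Bigr),
\]
comfortably inside the asserted $O\bigl(2^n\log^2(n)/n^2\bigr)$. The slack is deliberate: it means the (substantial) error terms in the local limit theorem need only be controlled up to constants, not optimised. Accordingly, the main obstacle is precisely this uniform local limit theorem for the Gaussian binomial coefficients: one must show $|\qbinom nk(e^{i\phi})|$ is close to $\binom nk\,e^{-\sigma_{n,k}^2\phi^2/2}$ near the origin and, crucially, is exponentially smaller than $\binom nk$ for $\phi$ bounded away from $0$ --- where the sine-product has further near-degeneracies at rational multiples of $2\pi$ --- with all estimates uniform in $k$ throughout the bulk. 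Establishing these modulus bounds for $\prod_{j=1}^k\sin((m+j)\phi/2)/\sin(j\phi/2)$ is the genuinely combinatorial core of the argument; everything else is the soft functional-analytic packaging above.
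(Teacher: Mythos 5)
Your route is genuinely different from the paper's, and the ``soft'' half of it is sound: the identity $\bigl\|(1-q)\qbinom nk\bigr\|_1=2\max_ic_i$ does follow from symmetry and unimodality, the weighted Cauchy--Schwarz with weights $(1+|i-\mu|)^{\pm1}$ correctly buys a single $\sqrt{\log n}$ instead of a $\sqrt{D}$, the Parseval identities are right, and the predicted orders $\sum_i\Delta_i^2\asymp\binom nk^2\sigma_{n,k}^{-5}$ and $\sum_i(i-\mu)^2\Delta_i^2\asymp\binom nk^2\sigma_{n,k}^{-3}$ are what a Gaussian profile of height $\binom nk/\sigma_{n,k}$ and width $\sigma_{n,k}$ gives. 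But as submitted this is not a proof, because the one ingredient everything rests on is exactly the one you defer: a local limit theorem for $\qbinom nk(e^{i\phi})$ that is (a) uniform in $k$ over the bulk $\varepsilon n\le k\le(1-\varepsilon)n$, (b) two-sided at the level of the weighted $L^2$ integrals $\int|\phi|^4\bigl|\qbinom nk(e^{i\phi})\bigr|^2\,d\phi$ and its derivative analogue (a pointwise LCLT for the coefficients $c_i$ is not enough here), and (c) strong enough on the complement of the central arc to beat $\binom nk^2\sigma_{n,k}^{-5}$ after squaring. Point (c) is where the real combinatorics lives: near every rational angle $\phi=2\pi a/p$ the sine product degenerates ($q$-Lucas gives $|\qbinom nk(\zeta_p)|\approx\binom{\lfloor n/p\rfloor}{\lfloor k/p\rfloor}$, which is only exponentially smaller than $\binom nk$), so one must run a uniform major/minor-arc analysis of $\prod_{j=1}^k\sin((m+j)\phi/2)/\sin(j\phi/2)$ over the whole circle. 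That analysis is of comparable difficulty to the theorem itself and is entirely absent, so the proposal has a genuine gap. (If completed, it would in fact yield $O(2^n\sqrt{\log n}/n^3)$, strictly sharper than the stated bound --- which is consistent with the paper's estimate not being optimal --- but the claim is not established.)

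For contrast, the paper avoids all analysis. It first proves the first-power estimate $\sum_k\bigl\|(1-q)\qbinom nk\bigr\|_1=\mathcal O(2^n\log n/n)$ by purely arithmetic means: if $p>k$ is a prime dividing $\binom nk$ (such a $p$ exists for $n\ge2k$ by Sylvester--Schur), then $\qbinom nk\frac{1-q}{1-q^p}$ is a polynomial with \emph{nonnegative} coefficients (cyclotomic factorisation plus unimodality/symmetry), whence $\bigl\|(1-q)\qbinom nk\bigr\|_1\le\frac2p\binom nk\le\frac2k\binom nk$, and $\sum_k\frac1k\binom nk=\mathcal O(2^n\log n/n)$ by Chebyshev's sum inequality. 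The passage to $(1-q)^2$ is then done by the $q$-Vandermonde identity $\qbinom{2k}{k-j}=\sum_iq^{x(k,j,i)}\qbinom ki\qbinom k{i+j}$, which distributes one factor of $(1-q)$ onto each $q$-binomial and bounds the double sum by $\mathsf S(k)^2$ (odd $n$ via Pascal's rule). The trade-off is clear: the paper's argument is elementary and short but lossy (it cannot see the true $\sigma_{n,k}^{-2}$ decay), while your Fourier-analytic scheme is potentially much sharper but requires a uniform quantitative LCLT for Gaussian binomials as a prerequisite. To turn your sketch into a proof you would need to either establish that LCLT or locate it in the literature with the required uniformity in $k$ and the required minor-arc bounds.
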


	\section{Symbolic covers}
	\label{s:symbolic}
Suppose that $f \in \Z [\h ]$ is not invertible in $\ell^1(\h,\mathbb{R})$, but that $\Delta^1_{\alpha _f}(X_f) \ne \{0\}$. We construct shift-equivariant, surjective group homomorphisms $\xi \colon \ell^\infty (\h,\Z)\longmapsto X_f$ whose restriction to $\{-\|f\|_1+1,\ldots,\|f\|_1-1\}^{\h}$ is still surjective. By using such homomorphism we will find symbolic covers and establish a specification property for $X_f$.

\subsection{Coding maps and specification} Before defining the coding maps let us fix some notation. Let $\eta\colon \ell^\infty (\h,\R) \longrightarrow \T^{\h}$ be the group homomorphism defined by
	$$
\eta(w)_\gamma = w_\gamma \quad (\textup{mod}\, 1)
	$$
for every $w \in \ell^\infty (\h,\R)$ and $\gamma \in \h$. The map $\eta$ is obviously left and right shift-equivariant: for every $\gamma \in \h $,
	$$
\eta \circ \tilde{\lambda }^\gamma = \lambda^\gamma \circ \eta,\quad \eta \circ \tilde{\rho }^\gamma = \rho^\gamma \circ \eta.
	$$

Following \cite{LS1}, \cite{ES} and \cite{DS} we define the \textit{linearisation} of $X_f$ as the $\tilde{\lambda }$-invariant set
	$$
W_f \coloneqq \eta^{-1}(X_f) = \{w \in \ell^\infty (\h,\R)\, :\, w \cdot f^* \in \ell^\infty (\h,\Z)\}.
	$$

Let $f^\sharp \in \R^{\h} $ be a formal inverse of $f$ with $f \cdot f^\sharp =f^\sharp \cdot f=1$ and assume that $g$ is a central element in $\Z[\h]$ is such that $g\cdot f^\sharp = f^\sharp \cdot g\in \ell^1(\h,\R)$. The maps $\tilde{\xi_g}\colon \ell^\infty(\h,\Z) \longrightarrow \ell^\infty(\h,\R)$ and $\xi_g: \ell^\infty(\h,\Z) \longrightarrow \T^{\h}$, defined by
	\begin{equation}
	\label{eq:xig}
\tilde{\xi_g} (v) = v\cdot (g^*\cdot (f^\sharp)^*) = \rho ^{f^\sharp\cdot g}(v) = \rho ^{g\cdot f^\sharp}(v)\quad \textup{and}\quad \xi _g(v)=\eta (\tilde{\xi_g} (v))
	\end{equation}
are left shift-equivariant group homomorphisms:
	$$
\tilde{\xi_g} \circ \tilde{\lambda }^\gamma = \tilde{\lambda }^\gamma \circ \tilde{\xi_g} \quad \text{and} \quad \xi_g \circ \tilde{\lambda }^\gamma = \alpha_f ^\gamma \circ \xi_g ,
	$$
for every $\gamma \in \h$.

	\begin{theo}
	\label{t:codingmaptheo}
Let $f$ be an irreducible element in $\Z [\h]$ which has a formal inverse $f^\sharp\in \R^{\h}$. Furthermore, let $g \in \Z[\h]$ be a central element such that $g \not \in \Z [\h]f$ and $g\cdot
f^\sharp \in \ell^1(\h,\R)$. Then the following holds.
	\begin{enumerate}
	\item
	\label{weakcont}
$\xi_g$ is continuous in the weak$^*$-topology on closed, bounded subsets of $\ell^\infty(\h,\Z)$;
	\item
	\label{wwww}
$\xi_g (\{ 0,\ldots ,\|f\|_1-1 \} ^{\h} )=\xi_g (B^\infty _{\|f\|_1/2}) = \xi_g (\ell^\infty (\h,\Z))= X_f$, where $B^\infty_K$ denotes the closed ball of radius $K$ in $\ell^\infty(\h,\Z)$.
	\end{enumerate}
	\end{theo}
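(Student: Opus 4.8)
The plan is to treat the two assertions separately, first verifying that $\xi_g$ is a well-defined, weak$^*$-continuous homomorphism into $X_f$, and then attacking the surjectivity statement \eqref{wwww}. Since $g\cdot f^\sharp\in\ell^1(\h,\R)$, the element $\phi^*\coloneqq(g\cdot f^\sharp)^*=(f^\sharp)^*g^*$ lies in $\ell^1(\h,\R)$, so $\tilde{\xi_g}(v)=v*\phi^*$ defines a bounded operator on $\ell^\infty(\h,\Z)$ and $\xi_g=\eta\circ\tilde{\xi_g}$ is a homomorphism (left equivariance was already recorded). To see that the image lands in $X_f$ I would compute $\tilde\rho^f\tilde{\xi_g}(v)=(v*\phi^*)*f^*=v*(f\,g\,f^\sharp)^*=v*g^*$, using $f\,g\,f^\sharp=g\,f\,f^\sharp=g$ (centrality of $g$ and $f\cdot f^\sharp=1$); since $v*g^*\in\ell^\infty(\h,\Z)$ this shows $\tilde{\xi_g}(v)\in W_f$ and hence $\xi_g(v)\in X_f$. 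For part \eqref{weakcont}, on a bounded subset of $\ell^\infty(\h,\Z)=\ell^1(\h,\C)^*$ the weak$^*$-topology is metrizable and coincides with coordinatewise convergence; each coordinate $(v*\phi^*)_\gamma=\sum_\delta v_\delta\,\phi^*_{\delta^{-1}\gamma}$ is the evaluation of $v$ against a fixed translate of $\phi^*\in\ell^1(\h,\C)$, hence weak$^*$-continuous, and $\eta$ is continuous for the product topology on $\T^\h$. Composing yields continuity of $\xi_g$ on bounded sets.

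The engine for \eqref{wwww} is the identity $\xi_g(w*f^*)=\rho^g\,\eta(w)$, valid for every $w\in\ell^\infty(\h,\R)$: indeed $\tilde{\xi_g}(w*f^*)=w*(f^*\phi^*)=w*g^*=\tilde\rho^g w$, and applying $\eta$ together with $\eta\circ\tilde\rho^g=\rho^g\circ\eta$ gives the claim. Two consequences follow. Taking $w\in\ell^\infty(\h,\Z)$ (so $\eta(w)=0$) shows $\ell^\infty(\h,\Z)*f^*\subseteq\ker\xi_g$, which will drive the carrying argument below. And given $x\in X_f$, lifting $x$ to $w\in\ell^\infty(\h,\R)$ with $\eta(w)=x$ and $\|w\|_\infty\le\tfrac12$ (symmetric representatives), the element $u\coloneqq w*f^*\in\ell^\infty(\h,\Z)$ satisfies $\|u\|_\infty\le\tfrac12\|f\|_1$, i.e.\ $u\in B^\infty_{\|f\|_1/2}$, and $\xi_g(u)=\rho^g x$. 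Hence $\rho^g X_f\subseteq\xi_g(B^\infty_{\|f\|_1/2})\subseteq\xi_g(\ell^\infty(\h,\Z))\subseteq X_f$.

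The decisive step, and the one I expect to be the main obstacle, is to upgrade this to an equality with $X_f$, i.e.\ to prove that $\rho^g\colon X_f\to X_f$ is surjective. Since $\rho^g$ is a continuous endomorphism of the compact group $X_f=\widehat{\Z[\h]/\Z[\h]f}$, it is surjective if and only if its dual endomorphism of $M\coloneqq\Z[\h]/\Z[\h]f$, namely multiplication by the central element $g$, is injective; this is exactly the point at which the hypotheses that $f$ be irreducible and $g\notin\Z[\h]f$ are needed. My plan here is to use that $\Z[\h]$ is a Noetherian domain (the group ring of a finitely generated torsion-free nilpotent group) and to show that $g\,h\in\Z[\h]f$ forces $h\in\Z[\h]f$. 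One route is to apply the snake lemma to multiplication by $g$ on $0\to\Z[\h]f\to\Z[\h]\to M\to0$, which (as $g$ is a nonzero central non-zero-divisor) identifies the $g$-torsion of $M$ with the kernel of multiplication by $f$ modulo the two-sided ideal $g\,\Z[\h]$; the content then becomes that $f$ is a non-zero-divisor in $\Z[\h]/g\,\Z[\h]$, which is where irreducibility of $f$ together with $g\notin\Z[\h]f$ must be invoked. Granting injectivity of multiplication by $g$ on $M$, we get $\rho^g X_f=X_f$, and the previous paragraph yields $\xi_g(B^\infty_{\|f\|_1/2})=\xi_g(\ell^\infty(\h,\Z))=X_f$.

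Finally, for the box $\{0,\dots,\|f\|_1-1\}^\h$ I would rerun the construction, choosing $x_0\in X_f$ with $\rho^g x_0=x$ and lifting $x_0$ to $w_0\in[0,1)^\h$, so that $v\coloneqq w_0*f^*$ has all coordinates in an integer interval of length $\|f\|_1$ and still satisfies $\xi_g(v)=\rho^g x_0=x$. Because adding any element of $\ell^\infty(\h,\Z)*f^*$ leaves $\xi_g$ unchanged, a standard carrying/reduction argument—performed along a bi-invariant order on $\h$, using the coordinates of $f^*$ to transport excess to neighbouring positions and $\{0,\dots,\|f\|_1-1\}$ as the set of admissible digits—produces a preimage inside the box without altering the image. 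This gives $\xi_g(\{0,\dots,\|f\|_1-1\}^\h)=X_f$ and closes the chain of equalities.
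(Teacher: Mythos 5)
Your architecture is the same as the paper's: the identity $\tilde{\xi_g}(w*f^*)=\tilde{\rho}^{g}w$, the chain $\rho^g(X_f)\subset\xi_g(B^\infty_{\|f\|_1/2})\subset\xi_g(\ell^\infty(\h,\Z))\subset X_f$ obtained from symmetric lifts, and the reduction of everything to the surjectivity of $\rho^g$ on $X_f$, which by Pontryagin duality is the injectivity of multiplication by $g$ on $\Z[\h]/\Z[\h]f$, i.e.\ the implication $gh\in\Z[\h]f\Rightarrow h\in\Z[\h]f$. But at exactly this point --- which you yourself call the decisive step --- your argument stops: you reduce the claim (via the snake lemma) to ``$f$ is a non-zero-divisor in $\Z[\h]/g\Z[\h]$'' and then write ``granting injectivity of multiplication by $g$ on $M$\dots''. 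That statement is not routine and is the real content of the theorem's hypotheses. Note that $\Z[\h]/g\Z[\h]$ typically has zero-divisors and even nilpotents (e.g.\ for the relevant multiplier $g=(z-1)^2$), so one must genuinely rule out that $f$ becomes one of them. The paper does this in Lemma \ref{l:Hayes}: it factors the central element $g$ into irreducibles $g_1\cdots g_k$ of $\Z[z^{\pm1}]$, observes that an irreducible $f$ with $g\notin\Z[\h]f$ cannot be divisible by any $g_i$, and then invokes a Gauss-lemma-type divisibility result for $\Z[\h]$ (\cite[Proposition 3.3.1]{Hayes}) together with the absence of zero-divisors in $\Z[\h]$ to peel off the factors $g_i$ one at a time. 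Without this input (or the elementary substitute of Remark \ref{r:Hayes} for $g=(z-1)^2$, which reduces modulo $z-1$ to the commutative domain $\Z[x^{\pm1},y^{\pm1}]$), the surjectivity of $\rho^g$ --- and hence all of part (2) --- is unproved. Being a Noetherian domain does not by itself give the needed unique-factorisation-style cancellation.

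A secondary, fixable issue is your last paragraph. The ``carrying/reduction argument along a bi-invariant order'' that you invoke to move a preimage into the box $\{0,\dots,\|f\|_1-1\}^{\h}$ while keeping its image fixed is neither needed nor justified (it is essentially the toppling argument of Theorem \ref{t:cover}, whose termination is not obvious). Since $\xi_g$ is a group homomorphism and you have already shown $\xi_g(B^\infty_{\|f\|_1/2})=X_f$ is the whole group, it suffices to translate the domain by the constant configuration $\tilde{u}$ with $\tilde u_\gamma=\mathrm{Int}(\|f\|_1/2)$: then $\xi_g(B^\infty_{\|f\|_1/2}+\tilde u)=\xi_g(B^\infty_{\|f\|_1/2})+\xi_g(\tilde u)=X_f$, which is exactly how the paper concludes.
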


For the proof of Theorem \ref{t:codingmaptheo} we need a lemma.

	\begin{lemm}
	\label{l:Hayes}
Let $f\in \mathbb{Z}[\h]$ be irreducible and $g\in \mathbb{Z}[z^{\pm1}]$ a central element of $\mathbb{Z}[\h]$. If $h\in \mathbb{Z}[\h]$, and if neither $g$ nor $h$ are elements in $\mathbb{Z}[\h] f$, then $gh$ is not in $\mathbb{Z}[\h]f$.
	\end{lemm}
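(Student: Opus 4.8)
The plan is to prove the contrapositive: assuming $gh\in\Z[\h]f$ with $g$ central and $g\notin\Z[\h]f$, I will show $h\in\Z[\h]f$. Write $R=\Z[\h]$; its center is the Laurent ring $Z=\Z[z^{\pm1}]$, a unique factorization domain whose units are $\pm z^k$. Two structural facts drive the argument. First, $R$ is a domain: $\h$ is torsion-free nilpotent, hence biorderable, so $\Z[\h]$ has no zero divisors. Second, for every prime $p\in Z$ the two-sided ideal $pR=Rp$ is prime, i.e.\ $R/pR$ is a domain. Indeed $R/pR$ is the twisted Laurent algebra over the domain $F=Z/(p)$ in which $x,y$ satisfy $xy=\bar z\,yx$ with $\bar z\in F$ invertible, and a leading-monomial computation in a lexicographic order on the exponents $(a,b)$ of $x^ay^b$ shows that the top coefficient of a product is a nonzero scalar multiple of the product of the top coefficients, so there are no zero divisors. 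In particular $p$ is not a unit of $R$, as $R/pR\neq 0$ (because $F\neq 0$).

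The heart of the matter is the following claim: \emph{if $p\in Z$ is prime, $p\notin Rf$, and $pr\in Rf$, then $r\in Rf$.} Writing $pr=bf$ and reducing modulo the prime ideal $pR$ gives $\bar b\,\bar f=0$ in the domain $R/pR$, so $b\in pR$ or $f\in pR$. If $f\in pR$, then $f=pf_0$; since $f$ is irreducible and $p$ is not a unit, $f_0$ is a unit, whence $f$ is an associate of $p$ and $Rf=pR\ni p$, contradicting $p\notin Rf$. Therefore $b\in pR$, say $b=pb_0$, and cancelling $p$ in the domain $R$ in $pr=pb_0f$ gives $r=b_0f\in Rf$. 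This is the step that genuinely uses the irreducibility of $f$.

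To finish I factor $g=u\,p_1\cdots p_m$ with $u=\pm z^k$ a unit and each $p_i$ prime in $Z$. I first check that no $p_i$ lies in $Rf$: if $p_i=cf\in Rf$, the same reduction modulo $p_iR$ gives either $f=p_if_0$ with $f_0$ a unit (so $Rf=p_iR$), or $c=p_ic_0$ with $1=c_0f$, forcing $f$ to be a unit (a one-sided inverse is two-sided in a domain, so $Rf=R$); either way $p_i\mid g$ yields $g\in Rf$, contradicting $g\notin Rf$. With all $p_i\notin Rf$, I peel the primes off one at a time: from $gh\in Rf$ and $u$ a unit, $p_1\cdots p_m h\in Rf$; the claim applied with $p=p_1$ gives $p_2\cdots p_m h\in Rf$; after $m$ applications $h\in Rf$, as required.

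I expect the main obstacle to be the two structural inputs about $R=\Z[\h]$, namely that it is a domain and that $R/pR$ is a domain for central primes $p$, together with the care needed to isolate the case in which $f$ is an associate of a central prime---where irreducibility must be invoked---from the generic case. Once these are secured, the remaining ideal-theoretic bookkeeping via unique factorization in $Z$ is routine.
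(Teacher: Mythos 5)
Your proof is correct, and its skeleton is the same as the paper's: factor $g$ in the central Laurent ring $\Z[z^{\pm1}]$ into irreducibles, rule out the degenerate case where $f$ is an associate of one of these central primes (this is where irreducibility of $f$ enters, in both arguments), and then peel the central factors off one at a time using cancellation in the domain $\Z[\h]$. The one genuine difference is how the key divisibility input is obtained. The paper cites Proposition 3.3.1 of Hayes's thesis (a Gauss-lemma-type statement) to conclude that the cofactor $w$ in $gh=wf$ is divisible by each irreducible factor of $g$; you instead prove the equivalent fact directly, namely that for a central prime $p$ the two-sided ideal $p\Z[\h]$ is completely prime, by exhibiting $\Z[\h]/p\Z[\h]$ as a twisted Laurent algebra over the domain $\Z[z^{\pm1}]/(p)$ and running a leading-monomial argument in the lex order on the exponents of $x^ay^b$ (the top coefficient of a product is a unit multiple of the product of the top coefficients, hence nonzero). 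This makes your version self-contained where the paper's relies on an unpublished reference, at the modest cost of having to verify the skew-Laurent structure of the quotient; both proofs use the same external fact that $\Z[\h]$ itself has no zero divisors.
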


	\begin{proof}
We argue indirectly and assume that neither $g$ nor $h$ are divisible by $f$, but $gh=wf$ for some $w\in \mathbb{Z}[\h]$. Choose a decomposition $g=g_1\cdots g_k$ of $g$ into irreducible elements of $\mathbb{Z}[z^{\pm1}]$ such that no $g_i$ is a unit in $R_1$.

Our hypotheses guarantee that $f$ is not divisible by $g_k$ (i.e., that $f\notin \mathbb{Z}[\h]g_k$). By \cite[Proposition 3.3.1]{Hayes}, $w$ must be divisible by $g_k$, i.e., $w=w_{k-1}g_k$ for some $w_{k-1}\in \mathbb{Z}[\h]$. Since $\mathbb{Z}[\h]$ has no nontrivial zero-divisors \cite[Example 8.16.5(g)+Proposition 8.16.9]{CT}, we obtain the equation $g_1\cdots g_{k-1}h=w_{k-1}f$. By repeating this argument inductively we obtain an element $w_0\in \mathbb{Z}[\h]$ with $h=w_0f$, in violation of our assumption on $h$.
	\end{proof}

	\begin{rema}
	\label{r:Hayes}
We require Lemma \ref{l:Hayes} in the special case where $g=(z-1)^2$. The following simple-minded argument suffices for this purpose.

For every $\phi \in \mathbb{Z}[\h]$ set $\tilde{\phi }=\phi (x,y,1)\in \mathbb{Z}[x^{\pm1},y^{\pm1}]$, where $x$ and $y$ commute (in the notation of Subsection \ref{ss:ZH}, $\tilde{\phi }$ is the image of $\phi $ in $\Phi _1(\Z [\h])= \mathbb{Z}[\h]/\mathbb{Z}[\h](z-1)$). Then $\tilde{\phi }=0$ if and only if $\phi $ is divisible by $z-1$.

Assume that $f$ is not divisible by $(z-1)$. If there exist $h,w\in \mathbb{Z}[\h]$ such that $gh=wf$, then we claim that $h\in \mathbb{Z}[\h]f$. Indeed, since $gh=wf$, we have that $\tilde{g}\tilde{h}=0=\tilde{w}\tilde{f}$. As $\tilde{f}\ne0$ and $\mathbb{Z}[x^{\pm1},y^{\pm1}]$ has no nontrivial zero-divisors, $\tilde{w}$ must be equal to $0$. Put $w'=w/(z-1)$, $g'=(z-1)$, and apply the argument above to the equation $g'h=(z-1)h=w'f$. Then $w'$ is divisible by $z-1$. We set $w''=w/(z-1)^2$ and obtain that $h=w''f$, i.e., that $h\in \mathbb{Z}[\h]f$.
	\end{rema}

	\begin{proof}[Proof of Theorem \ref{t:codingmaptheo}]
The proof of (\ref{weakcont}) can be found in \cite[Proposition 4.2]{DS}. In order to prove (\ref{wwww}) we argue as in \cite[Lemma 4.5]{DS}: fix $x \in X_f$ and choose $w \in W_f$ such that $\eta(w) =x$ and $-\frac12 \leq w_\gamma < \frac12$ for every $\gamma \in \h$. If $v = w \cdot f^*$, then $v \in \ell^\infty (\h,\Z)$, $-\|f\|_1/2 \le v_\gamma  <\|f\|_1/2$ for every $\gamma \in \Gamma $, and $\xi_g (v)= \rho ^g x$. Therefore,
	$$
\rho ^g (X_f) \subset \xi_g (B^\infty _{\|f\|_1/2}) \subset \xi_g (\ell^\infty (\h,\Z))\subset X_f.
	$$
We will show that $\rho ^g(X_f) = X_f$. Clearly, $\rho ^gx=\lambda ^{g^*}x$ for every $x\in \mathbb{T}^{\h}$, since $g$ is central. If $\rho ^g(X_f)\subsetneq X_f$, then \eqref{eq:Xf} shows that there exists an element $h\in \mathbb{Z}[\h]\smallsetminus \mathbb{Z}[\h]f$ which annihilates $\rho ^g(X_f)=\lambda ^{g^*}(X_f)$, and which therefore satisfies that $hg\in \mathbb{Z}[\h]f$. Since $f$ is irreducible and not divisible by $(z-1)$, $g\in \Z[\h]f$ by either Lemma \ref{l:Hayes} or Remark \ref{r:Hayes}. This contradiction shows that $\rho ^g(X_f) = X_f$.

Finally we write $u=\textup{Int}(\|f\|_1/2)$ for the integral part of $\|f\|_1/2$ and denote by $\tilde{u}\in \ell ^\infty (\h,\mathbb{Z})$ the constant configuration with $\tilde{u}_\gamma =u$ for every $\gamma \in\h$. Then
	$$
\xi_g (\{ 0,\ldots ,\|f\|_1-1\} ^{\h} ) = \xi_g (B^\infty _{\|f\|_1/2} + \tilde{u}) =\xi_g (B^\infty _{\|f\|_1/2}) + \xi_g(\tilde{u}) = X_f,
	$$
which completes the proof of (\ref{wwww})).
	\end{proof}

	\begin{rema}
	\label{r:noncentral}
The reason for restricting attention to \textit{central} multipliers of $f^\sharp$ is the following: if a noncentral element $g \in \Z[\h]$ is such that
 $ f^\sharp\cdot g \in \ell ^1(\h,\mathbb{R})$, then the map $\xi_g$ given
 by \eqref{eq:xig} 
 \begin{equation}
	\label{eq:xignew}
 {\xi_g} (v) = v\cdot (f^\sharp\cdot g)^* \quad (\ \text{mod 1}) = \rho ^{f^\sharp\cdot g}(v) \quad (\ \text{mod 1})
	\end{equation}
	is still a well-defined coding map from $\ell^\infty(\h,\Z)$ into $X_f$.
 However, severe technical obstacles arise in Lemma \ref{l:Hayes}
 and elsewhere, e.g., for non-central $g$, $\rho^g(X_f)$ is not necessarily a subset of  $X_f$.
 	\end{rema}

We are now ready to state the following specification theorem.

	\begin{theo}
	\label{t:specification}
Let $f$ be an irreducible element in $\Z [\h]$ which has a formal inverse $f^\sharp \in \R^{\h}$. Suppose that there exists a central element $g \in \Z[\h]\smallsetminus \mathbb{Z}[\h]f$ with $f^\sharp \cdot g \in \ell^1(\h,\R)$. Then we can find, for every $\varepsilon > 0$, a nonempty finite subset $F_\varepsilon$ of $\Gamma$ with the following property: if $F_1,F_2$ are subsets of $\h$ with
	\begin{equation}
	\label{specification1}
F_\varepsilon F_1 \cap F_\varepsilon F_2=\varnothing ,
	\end{equation}
then there exists, for every pair of points $x^1,x^2$ in $X_f$, a point $y \in X_f$ with
	\begin{equation}
	\label{specification2}
\dT x^j_\gamma - y_\gamma \dT< \varepsilon \quad\text{for every} \quad \gamma \in F_j,\enspace \textup{and}\enspace  j=1,2.
	\end{equation}
	\end{theo}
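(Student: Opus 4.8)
The plan is to build the interpolating point $y$ directly as an image of the coding map $\xi_g$ from \eqref{eq:xig}, exploiting two facts from Theorem \ref{t:codingmaptheo}: that $\xi_g$ maps \emph{onto} $X_f$ already from the bounded cube $\{0,\dots,\|f\|_1-1\}^{\h}$, and that $\xi_g$ is built from the \emph{summable} central multiplier $w\coloneqq f^\sharp\cdot g=g\cdot f^\sharp\in\ell^1(\h,\R)$, which makes it approximately local. Concretely, I would lift $x^1,x^2\in X_f$ to bounded integer configurations, splice these two configurations on two disjoint thickenings of $F_1$ and $F_2$ into a single $v\in\ell^\infty(\h,\Z)$, and output $y=\xi_g(v)$, which then lies in $X_f$ automatically. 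Summability of $w$ forces $y$ to agree with $x^j$ on $F_j$ up to $\varepsilon$, while the separation hypothesis \eqref{specification1} is exactly what guarantees that the two thickenings are disjoint, so that the splicing is consistent.

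First I would record the explicit form of the coding map. Unwinding \eqref{eq:xig} and the definition of $\tilde{\rho }$ gives
$$
\xi_g(v)_\gamma=\eta\Bigl(\sum\nolimits_{\alpha\in\h}w_\alpha\,v_{\gamma\alpha}\Bigr)\qquad(v\in\ell^\infty(\h,\Z),\ \gamma\in\h).
$$
Since $w\in\ell^1(\h,\R)$, for a given $\varepsilon>0$ I would fix a finite set $F_\varepsilon\subset\h$ with $1_{\h}\in F_\varepsilon$ and
$$
\sum\nolimits_{\alpha\notin F_\varepsilon}|w_\alpha|<\frac{\varepsilon}{2\|f\|_1}.
$$
The resulting locality estimate is then immediate: whenever $u,u'\in\ell^\infty(\h,\Z)$ satisfy $\|u\|_\infty,\|u'\|_\infty<\|f\|_1$ and $u_\delta=u'_\delta$ for all $\delta\in\gamma F_\varepsilon$, the coordinates $\alpha\in F_\varepsilon$ contribute nothing to $\xi_g(u)_\gamma-\xi_g(u')_\gamma$, whence
$$
\dT\xi_g(u)_\gamma-\xi_g(u')_\gamma\dT\le\sum\nolimits_{\alpha\notin F_\varepsilon}|w_\alpha|\,|u_{\gamma\alpha}-u'_{\gamma\alpha}|<2\|f\|_1\cdot\frac{\varepsilon}{2\|f\|_1}=\varepsilon.
$$
In words, $\xi_g(v)_\gamma$ depends, up to an error $<\varepsilon$, only on the restriction of $v$ to the finite set $\gamma F_\varepsilon$.

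Next I would carry out the splicing. By Theorem \ref{t:codingmaptheo}\,(\ref{wwww}) I may choose $v^1,v^2\in\{0,\dots,\|f\|_1-1\}^{\h}$ with $\xi_g(v^j)=x^j$; in particular $\|v^j\|_\infty<\|f\|_1$. Writing $T_j\coloneqq F_jF_\varepsilon$ for the thickening of $F_j$ collecting all coordinates of $v$ that influence $\{\xi_g(v)_\gamma:\gamma\in F_j\}$, the separation hypothesis \eqref{specification1} is tailored to force $T_1\cap T_2=\varnothing$. I then define $v\in\ell^\infty(\h,\Z)$ by $v=v^1$ on $T_1$, $v=v^2$ on $T_2$, and $v=0$ off $T_1\cup T_2$; this is well defined and still satisfies $\|v\|_\infty<\|f\|_1$. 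Put $y\coloneqq\xi_g(v)\in X_f$. For $\gamma\in F_j$ we have $\gamma F_\varepsilon\subseteq T_j$, where $v$ coincides with $v^j$, so the locality estimate applied to $u=v$ and $u'=v^j$ gives $\dT y_\gamma-x^j_\gamma\dT<\varepsilon$, which is precisely \eqref{specification2}.

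The two nontrivial ingredients, which I would invoke rather than reprove, are the surjectivity of $\xi_g$ from the bounded cube (Theorem \ref{t:codingmaptheo}\,(\ref{wwww}), which itself uses irreducibility of $f$, $g\notin\Z[\h]f$, and $\rho^g(X_f)=X_f$) and the existence of a summable central multiplier $w=f^\sharp\cdot g\in\ell^1(\h,\R)$ — the very phenomenon established for $f=2-x^{-1}-y^{-1}$ in Section \ref{s:section4}. I expect the principal difficulty to be conceptual rather than computational: since $(X_f,\alpha_f)$ is nonexpansive there is no genuine homoclinic group available for gluing, so the whole argument rests on the \emph{approximate} locality furnished by $\ell^1$-summability of $w$. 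The one point demanding care is the noncommutative left/right bookkeeping: because $\xi_g$ is built from $\tilde{\rho }^{f^\sharp\cdot g}$, the coordinate of $y$ at $\gamma$ sees $v$ on the right-translate $\gamma F_\varepsilon$, so the relevant thickenings are $T_j=F_jF_\varepsilon$ and the separation in \eqref{specification1} is to be read on the corresponding side.
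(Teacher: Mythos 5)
Your proposal is correct and is in substance the paper's own proof: the paper merely points to the argument of \cite[Theorem 4.4]{DS} with $w_{f^*}$ replaced by ${f^*}^\sharp\cdot g^*$ and $x^j$ by $\rho^g$-preimages, and that argument is exactly your lift--splice--convolve construction, resting on the surjectivity of $\xi_g$ on the bounded cube (Theorem \ref{t:codingmaptheo}) and on the $\ell^1$-tail of $w=f^\sharp\cdot g$. Your closing remark about sides is also the right one to make: since $\xi_g(v)_\gamma$ depends (up to $\varepsilon$) only on $v|_{\gamma F_\varepsilon}$, the separation the argument actually consumes is $F_1F_\varepsilon\cap F_2F_\varepsilon=\varnothing$, so \eqref{specification1} must indeed be read with the thickening on that side.
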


	\begin{proof}
The proof follows from the proof of \cite[Theorem 4.4.]{DS} by replacing $w_{f^*}$ with ${f^*}^\sharp \cdot g^*$ and $x^j$ by $y^j = \rho_g x^j$, $j=1,2$.
	\end{proof}

\subsection{A symbolic cover of $X_f$ with $f=2-x^{-1}-y^{-1}$}

	\begin{defi}
	\label{d:cover}
Let $\Gamma $ be a countably infinite discrete amenable group, and let $f\in \mathbb{Z}[\Gamma ]$. If $N\in \mathbb{N},\,N\ge2$, and if $\Sigma \subset \Sigma _N=\{0,\ldots,N-1\}^{\Gamma }$ is  a subshift (i.e., a closed, left-shift-invariant subset), then the topological entropy $\mathsf{h}(\Sigma )$ is given by
	$$
\mathsf{h}(\Sigma)= \lim_{n\to\infty }\frac{ \log |P_{F_n}(\Sigma)| }{ | F_n | },
	$$
where $(F_n)_{n\ge1}$ is a F\o lner sequence in $\Gamma$ and $P_F\colon \Sigma \longrightarrow \{0,\dots ,N-1\}^F$ is the projection map for every subset $F\subset \Gamma$.
	\begin{enumerate}
	\item
A subshift $\Sigma \subset \Sigma _N$ is a \textit{symbolic cover} of $X_f$ if there exists a left-shift-equivariant continuous surjective map $\xi\colon \Sigma \longrightarrow X_f$;
	\item
The symbolic cover $\Sigma $ is an \textit{equal-entropy symbolic cover} of $X_f$ if the the left shift actions of $\Gamma $ on $\Sigma $ and on $X_f$ have the same topological entropy.
	\end{enumerate}
	\end{defi}
Before proving that $\Sigma _2=\{0,1\}^{\h}$ is an equal-entropy symbolic cover of $X_f$ for $f=2-x^{-1}-y^{-1}$ let us discuss some motivation for this claim.

	\begin{theo}[\cite{LS2}]
	\label{t:entropy}
Let $f\in \Z[\h]$ be of the form \eqref{eq:form1} -- \eqref{eq:form2} as in Section \ref{s:expansive}. Then the topological entropy of $(X_f,\alpha_f)$ satisfies that
	\begin{equation}
	\label{eq:entropy1}
\mathsf{h}(\alpha _f)\ge \int_{\s} \max\biggl (\int_{\s}\log |g_0(\xi,\theta)|\,d\xi , \int_\mathbb{S}\log |g_1(\xi,\theta)| d\xi \biggr) \,d\theta ,
	\end{equation}
where all integrals are taken with respect to normalised Lebesgue measure on $\mathbb{S}$. If $\alpha _f$ is expansive, then
	\begin{equation}
	\label{eq:entropy2}
\mathsf{h}(\alpha _f)=\int_{\s} \max\biggl (\int_{\s}\log |g_0(\xi,\theta)|\,d\xi , \int_\mathbb{S}\log |g_1(\xi,\theta)| d\xi \biggr) \,d\theta .
	\end{equation}
	\end{theo}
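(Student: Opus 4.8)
The plan is to pass from topological entropy to a Fuglede--Kadison determinant and to evaluate that determinant fibrewise over the centre of $\h$. First I would invoke the entropy--determinant identity for principal algebraic actions of amenable groups (the determinant conjecture being known for $\h$), which gives $\mathsf{h}(\alpha _f)=\log\det_{\mathcal{N}\h}(\rho ^f)$, where $\det_{\mathcal{N}\h}$ denotes the Fuglede--Kadison determinant of right convolution by $f$ on $\ell ^2(\h)$. Since the centre of $\h$ is generated by $z$, the subalgebra $L(\langle z\rangle)\cong L^\infty (\s,d\theta)$ is central in $\mathcal{N}\h$ and induces a direct integral decomposition $\mathcal{N}\h=\int_{\s}^{\oplus}\mathcal{M}_\theta \,d\theta$ in which the canonical trace splits as $\tau =\int_{\s}\tau _\theta \,d\theta$. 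The determinant then factorises as
	\begin{equation*}
\log\det_{\mathcal{N}\h}(\rho ^f)=\int_{\s}\log\det_{\tau _\theta }\bigl(\pi _\theta (f)\bigr)\,d\theta ,
	\end{equation*}
so everything reduces to the fibre determinants $\log\det_{\tau _\theta }(\pi _\theta (f))$. The roots of unity form a countable, hence Lebesgue-null, set and may be discarded from this integral.

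For $\theta \in \s$ not a root of unity I would use the representation of $\h$ on $L^2(\s)$ in which $z\mapsto \theta \cdot \mathrm{Id}$, the generator $x$ acts by multiplication by the coordinate $\xi $, and $y$ acts as the irrational rotation $V\colon \psi (\xi )\mapsto \psi (\xi \theta ^{\pm 1})$. In this fibre $\pi _\theta (f)=g_0(\xi ,\theta )+g_1(\xi ,\theta )V$ is a first-order twisted Laurent operator, and $\tau _\theta $ is the canonical trace of the rotation algebra generated by $\pi _\theta (x)$ and $V$. The hypothesis $\mathsf{U}(g_0)=\mathsf{U}(g_1)=\varnothing $ guarantees that $g_0(\cdot ,\theta )$ and $g_1(\cdot ,\theta )$ never vanish, so $\xi \mapsto \log |g_i(\xi ,\theta )|$ are continuous; unique ergodicity of $V$ then converts every Birkhoff average into integration over $\s$. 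The target is the fibre formula
	\begin{equation*}
\log\det_{\tau _\theta }\bigl(\pi _\theta (f)\bigr)=\max\Bigl(\int_{\s}\log |g_0(\xi ,\theta )|\,d\xi ,\ \int_{\s}\log |g_1(\xi ,\theta )|\,d\xi \Bigr),
	\end{equation*}
whose two inner integrals are the partial Mahler measures of $g_0$ and $g_1$; integrating in $\theta $ yields exactly the right-hand side of \eqref{eq:entropy1}--\eqref{eq:entropy2}.

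The heart of the matter is this fibre formula, and I expect it to be the main obstacle. In the constant-coefficient model $g_i(\xi ,\theta )=c_i(\theta )$ the operator $c_0+c_1V$ is an ordinary Laurent operator with symbol $c_0+c_1w$, so $\log\det_{\tau _\theta }=\tfrac12\tau _\theta \log\bigl((\pi _\theta f)^*\pi _\theta f\bigr)$ equals the Mahler measure $\int_{\s}\log |c_0+c_1w|\,dw=\max(\log |c_0|,\log |c_1|)$ by Jensen's formula, already exhibiting the $\max$. For $\xi $-dependent coefficients $(\pi _\theta f)^*\pi _\theta f$ is an ergodic Jacobi operator rather than a Laurent operator, and I would evaluate $\tau _\theta \log$ of it through the Thouless formula, relating $\log\det_{\tau _\theta }(\pi _\theta f)$ to the top Lyapunov exponent of the transfer cocycle of the recursion $g_1\psi _{n-1}+g_0\psi _n=0$ --- precisely the scalar cocycle built from \eqref{eq:phi} in the proof of Theorem \ref{t:LS2}.

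Finally the two directions separate, which explains why only the lower bound \eqref{eq:entropy1} survives without expansiveness. Factoring out the nonvanishing diagonal $g_0$, write $\pi _\theta (f)=g_0(1+WV)$ with $W=g_0^{-1}g_1$; the elementary estimate $\log\det_{\tau _\theta }(1+WV)\ge \max\bigl(0,\int_{\s}\log |W|\,d\xi \bigr)$, valid by subharmonicity and radial monotonicity of the logarithmic potential of the rotation-invariant Brown measure of the weighted Haar unitary $WV$, gives $\log\det_{\tau _\theta }(\pi _\theta f)\ge \max(\cdots )$ for every $\theta $ and every $f$ of the form \eqref{eq:form1}--\eqref{eq:form2}; integrating yields \eqref{eq:entropy1}. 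For the matching upper bound one needs expansiveness: by Theorem \ref{t:LS2} and Remark \ref{r:LS2} it is equivalent to the transfer cocycle having nonzero drift against every invariant measure for every $\theta $, that is, to uniform hyperbolicity with $\int_{\s}\log |g_0(\cdot ,\theta )|\ne \int_{\s}\log |g_1(\cdot ,\theta )|$ for all $\theta $. This spectral gap bounds $(\pi _\theta f)^*\pi _\theta f$ away from $0$, pins the Brown measure of $WV$ to a single circle, and turns the inequality above into an equality, giving \eqref{eq:entropy2}. The von Neumann-algebraic reductions --- the centre decomposition, measurability and integrability of $\theta \mapsto \log\det_{\tau _\theta }$, and negligibility of the roots of unity --- are routine by comparison with this Lyapunov-exponent computation.
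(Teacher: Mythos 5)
There is nothing in the paper to compare your argument against: Theorem \ref{t:entropy} is imported verbatim from \cite{LS2} (``in preparation'') and no proof is given here, so your proposal has to stand on its own. Its overall architecture --- entropy equals the Fuglede--Kadison determinant, direct-integral decomposition of $\mathcal{N}\h$ over the centre $L(\langle z\rangle)\cong L^\infty(\s)$, and a fibrewise determinant computation in the hyperfinite $\mathrm{II}_1$ factors over irrational $\theta$ --- is the natural route, and the fibre formula itself is correct and can be obtained more cleanly than you suggest: since $\mathsf{U}(g_0)=\mathsf{U}(g_1)=\varnothing$, the function $W=g_0^{-1}g_1$ is invertible in $C(\s)$, and unique ergodicity of the irrational rotation gives $\lim_n\|(WV)^{\pm n}\|^{1/n}=\exp\bigl(\pm\int_\s\log|W|\,d\xi\bigr)$, so the spectrum (hence the rotation-invariant Brown measure) of $WV$ lies on a single circle and $\log\det_{\tau_\theta}(1+WV)=\max\bigl(0,\int_\s\log|W|\,d\xi\bigr)$ exactly.

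The genuine gap is in how the two halves of the theorem are made to depend on your inputs. Every estimate you carry out after the first paragraph bounds the \emph{determinant}, not the entropy; the only bridge from the determinant back to $\mathsf{h}(\alpha_f)$ is the identity $\mathsf{h}(\alpha_f)=\log\det_{\mathcal{N}\h}(\rho^f)$, which you invoke unconditionally under the misnomer ``determinant conjecture.'' What you actually need is the entropy--determinant theorem for injective $f$ over amenable groups (Li--Thom); within the toolkit of this paper only the \emph{expansive} case (Deninger--Schmidt) is available, so in the nonexpansive case your argument establishes a lower bound for $\log\det$ but not for $\mathsf{h}(\alpha_f)$, and \eqref{eq:entropy1} is left unproved --- this is precisely the half of the theorem that requires a separate mechanism (summable homoclinic points, specification, or periodic-point counting, which is what Sections \ref{s:section4}--\ref{s:symbolic} of the paper are building). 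Conversely, your account of where expansiveness enters is internally inconsistent: the spectral-radius argument above pins the Brown measure to one circle for \emph{every} irrational $\theta$ using only \eqref{eq:form2}, so granting Li--Thom your method yields equality with no expansiveness hypothesis at all, and the ``uniform hyperbolicity'' you extract from Theorem \ref{t:LS2} (whose root-of-unity condition (2) you also drop) plays no role. You should either cite the unconditional entropy--determinant identity precisely and then delete the spurious expansiveness discussion, or supply an independent proof of the lower bound \eqref{eq:entropy1}.
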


	\begin{theo}
	\label{t:cover}
Let $f=2-x^{-1}-y^{-1} \in \Z[\h]$. Then the full 2-shift $\Sigma _2=\{0,1\} ^{\h}$ is a symbolic cover of $X_f$.
	\end{theo}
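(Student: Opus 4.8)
The plan is to obtain the required coding map as the restriction to $\{0,1\}^{\h}$ of the map $\xi_g$ of Theorem \ref{t:codingmaptheo}, and then to carry out an alphabet reduction. First I would take the central multiplier $g=(1-z)^2$ and verify the hypotheses of Theorem \ref{t:codingmaptheo}. The formal inverse $f^\sharp=({f^*}^\sharp)^*$ exists by \eqref{eq:fundsol}; adjointing the conclusion of Theorem \ref{t:mainmulti} gives $g\cdot f^\sharp=f^\sharp\cdot(1-z)^2\in\ell^1(\h,\R)$; and $g\notin\Z[\h]f$ follows from Remark \ref{r:Hayes}, since $f(x,y,1)=2-x^{-1}-y^{-1}\neq0$ shows that $f$ is not divisible by $z-1$ (so for this particular $g=(z-1)^2$ the weaker divisibility condition of Remark \ref{r:Hayes} replaces the irreducibility used in Lemma \ref{l:Hayes}). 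Theorem \ref{t:codingmaptheo} then provides a weak$^*$-continuous, left-shift-equivariant homomorphism $\xi_g$ with $\xi_g(\{0,\dots,\|f\|_1-1\}^{\h})=X_f$. Since $\|f\|_1=4$, this already exhibits the full $4$-shift $\{0,1,2,3\}^{\h}$ as a symbolic cover, and the restriction of $\xi_g$ to $\{0,1\}^{\h}$ is still continuous and shift-equivariant; hence the entire difficulty is to prove that this restriction remains \emph{surjective}.

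For the reduction I would first locate a useful part of $\ker\xi_g$. Using \eqref{eq:xig}, the identity $f^*(f^\sharp)^*=(f^\sharp f)^*=1$, and the centrality of $g$, one computes $\tilde\xi_g(w*f^*)=w*g^*$ for every $w\in\ell^\infty(\h,\R)$; since $g^*=(1-z^{-1})^2\in\Z[\h]$, this gives $\xi_g(w*f^*)=\eta(w\,g^*)=0$ whenever $w\in\ell^\infty(\h,\Z)$. Thus $\ker\xi_g\supseteq\{w*f^*:w\in\ell^\infty(\h,\Z)\}$. Consequently it suffices to show that every $v\in\{0,1,2,3\}^{\h}$ is congruent, modulo this subgroup, to some $v_0\in\{0,1\}^{\h}$: then $\xi_g(v_0)=\xi_g(v)$, whence $\xi_g(\{0,1\}^{\h})\supseteq\xi_g(\{0,1,2,3\}^{\h})=X_f$, which is what we want.

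Concretely, subtracting $\delta_\gamma*f^*$ from a configuration (that is, incrementing a carry counter $w_\gamma$) lowers $v_\gamma$ by $2$ and raises each of $v_{\gamma x}$ and $v_{\gamma y}$ by $1$; under the abelianisation $\h\to\Z^2$ this moves the excess strictly in the direction of increasing $k_1+k_2$. The plan is to run this carrying (``chip-firing'') process so as to drive every coordinate into $\{0,1\}$ while keeping the total firing count $w$ bounded, and then to invoke the weak$^*$-continuity of $\xi_g$ (Theorem \ref{t:codingmaptheo}(\ref{weakcont})) and the weak$^*$-compactness of $\{0,1\}^{\h}$ to conclude that $\xi_g(\{0,1\}^{\h})$ is closed and therefore equals $X_f$.

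The hard part will be the convergence and boundedness of this carrying process. Because the grading $k_1+k_2$ is unbounded below, there is no base level from which to launch a binary carry, and the naive upward propagation lets the carry counts grow by one per level, so that $w$ need not be bounded. Overcoming this is precisely where the finer summability of Theorem \ref{t:mainmulti} must enter: the bare formal inverse $f^\sharp$ is not summable (this is exactly the nonexpansiveness of $\alpha_f$), whereas $(1-z)^2\cdot f^\sharp$ is, and I would use the associated summable homoclinic point $\eta\bigl((1-z)^2\cdot f^\sharp\bigr)$ together with its translates as correction terms to damp the accumulated carries in a greedy scheme. A F\o lner exhaustion of $\h$, on whose finite pieces the carrying manifestly terminates, combined with a diagonal/compactness argument, would then be needed to pass to a bounded global $w\in\ell^\infty(\h,\Z)$ and a limiting configuration $v_0\in\{0,1\}^{\h}$ with $\xi_g(v_0)=\xi_g(v)$.
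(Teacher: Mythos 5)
Your setup is sound and coincides with the paper's: taking $g=(1-z)^2$, checking the hypotheses of Theorem \ref{t:codingmaptheo} via Remark \ref{r:Hayes}, concluding that $\{0,\dots,\|f\|_1-1\}^{\h}$ is a symbolic cover, and observing that $\{w*f^*:w\in\ell^\infty(\h,\Z)\}\subset\ker\xi_g$ (this is exactly how the paper justifies $\xi_g(v)=\xi_g(\bar v_M)$) are all correct. The gap is in the alphabet reduction itself. Your last paragraph correctly diagnoses that a \emph{global} carrying scheme with a single bounded firing function $w\in\ell^\infty(\h,\Z)$ is problematic --- the grading $k_1+k_2$ has no bottom and the carry counts grow linearly per level --- but the proposed remedy (using translates of the summable homoclinic point ``as correction terms to damp the accumulated carries'', then a F\o lner/diagonal argument) is a restatement of the difficulty rather than a resolution of it. In particular it is not explained how a point of $X_f\subset\T^{\h}$ can be subtracted from an integer configuration without leaving the symbolic side, nor why the greedy scheme would terminate or stay bounded. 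As it stands, the surjectivity of $\xi_g$ restricted to $\{0,1\}^{\h}$ is not established.

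The paper sidesteps the global boundedness issue entirely by localising. For each finite box $A_M$ it topples (subtracts elements $\gamma f^*$) \emph{only at sites of $A_M$}; since each toppling pushes mass strictly in the positive $x$ and $y$ directions, this terminates and yields $\bar v_M$ with values in $\{0,1\}$ on $A_M$ and boundedly many values off $A_M$. It then \emph{discards} the toppling debris by splicing the original $v$ back in outside $A_M$, obtaining $\tilde v_M$, and bounds each coordinate of $\xi_g(\bar v_M)-\xi_g(\tilde v_M)$ by a constant times $\sum_{\gamma\in\h\smallsetminus A_M}|w_{\gamma^{-1}\gamma'}|$ with $w=(1-z^{-1})^2{f^*}^\sharp$. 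This is precisely where Theorem \ref{t:mainmulti} enters: the $\ell^1$-tail of $w$ tends to $0$, so $\xi_g(\tilde v_M)\to\xi_g(v)$; since $\tilde v_M$ is $\{0,1\}$-valued on $A_M$ and $A_M\uparrow\h$, weak$^*$-compactness of $\Sigma_2$ together with Theorem \ref{t:codingmaptheo}(\ref{weakcont}) gives $\xi_g(v)\in\xi_g(\Sigma_2)$ and hence $\xi_g(\Sigma_2)=X_f$. You would need to supply this localisation-plus-tail-estimate step (or a genuine substitute for it); it is the mathematical core of the theorem and is the one step your proposal leaves open.
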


	\begin{coro}
	\label{c:entropy}
Let $f=2-x^{-1}-y^{-1} \in \Z[\h]$. Then  $\mathsf{h}(\alpha_f)= \log 2$, so that $\Sigma _2$ is an equal-entropy symbolic cover of $X_f$.
	\end{coro}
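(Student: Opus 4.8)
The plan is to sandwich $\mathsf{h}(\alpha_f)$ between two copies of $\log 2$. For the upper bound I would invoke the symbolic cover $\xi\colon\Sigma_2\longrightarrow X_f$ furnished by Theorem \ref{t:cover}. Since $\xi$ is a continuous, surjective, left-shift-equivariant map and $\h$ is amenable, topological entropy does not increase under this factor map, so $\mathsf{h}(\alpha_f)\le\mathsf{h}(\Sigma_2)$. The entropy of the full $2$-shift is immediate from Definition \ref{d:cover}: for any F\o lner sequence $(F_n)$ we have $|P_{F_n}(\Sigma_2)|=2^{|F_n|}$, hence $\mathsf{h}(\Sigma_2)=\log 2$. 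This already gives $\mathsf{h}(\alpha_f)\le\log 2$.

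For the matching lower bound I would realise $(X_f,\alpha_f)$ as a system covered by Theorem \ref{t:entropy}. The element $f=2-x^{-1}-y^{-1}$ is not literally of the form \eqref{eq:form1}, but left multiplication by the unit $y$ leaves the defining left ideal unchanged: since $y$ is invertible in $\Z[\h]$ we have $\Z[\h]\,yf=(\Z[\h]y)f=\Z[\h]f$, so that $X_{yf}=X_f$ and $\alpha_{yf}=\alpha_f$ as dynamical systems. Using the commutation relation $yx^{-1}=x^{-1}yz$ coming from \eqref{eq:relations} together with the centrality of $z$, I compute $yf=(2-x^{-1}z)y-1$, which is of the form $g_1y+g_0$ with $g_1=2-x^{-1}z\in\Z[x^{\pm1},z^{\pm1}]$ and $g_0=-1$. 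Both satisfy \eqref{eq:form2}: $g_0$ is a nonzero constant, while $g_1(\xi,\theta)=2-\xi^{-1}\theta$ vanishes only when $|\xi|=\tfrac12$, so $\mathsf{U}(g_0)=\mathsf{U}(g_1)=\varnothing$.

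With $yf$ in the required form, I would apply the entropy inequality \eqref{eq:entropy1} and evaluate the two inner integrals. Since $g_0\equiv-1$ one has $\int_\s\log|g_0(\xi,\theta)|\,d\xi=0$. For $g_1$, note that for each fixed $\theta\in\s$ the map $\xi\mapsto\xi^{-1}\theta$ is measure-preserving on $\s$, so $\int_\s\log|2-\xi^{-1}\theta|\,d\xi=\int_\s\log|2-w|\,d\lambda(w)=\log 2$ by Jensen's formula, the zero $w=2$ lying outside the closed unit disc. Hence the integrand $\max(0,\log 2)=\log 2$ is independent of $\theta$, and \eqref{eq:entropy1} yields $\mathsf{h}(\alpha_f)=\mathsf{h}(\alpha_{yf})\ge\log 2$. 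Combining the two bounds gives $\mathsf{h}(\alpha_f)=\log 2=\mathsf{h}(\Sigma_2)$, so the cover of Theorem \ref{t:cover} has equal entropy, as claimed.

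The only genuinely delicate point in this argument is the reduction to the hypotheses of Theorem \ref{t:entropy}: one must exploit that left multiplication by the unit $y$ does not alter the left ideal $\Z[\h]f$ (so that the action is literally unchanged rather than merely conjugate), rewrite $yf$ in the variable $y$ via the Heisenberg relations, and check that the resulting unitary varieties are empty. Once $f$ has been brought into the form \eqref{eq:form1}--\eqref{eq:form2}, the remaining entropy computation is a routine Mahler-measure evaluation, and the upper bound is a standard property of factor maps.
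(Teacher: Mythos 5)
Your proposal is correct and follows essentially the same route as the paper: the lower bound $\mathsf{h}(\alpha_f)\ge\log 2$ comes from the entropy formula of Theorem \ref{t:entropy} and the upper bound from the symbolic cover of Theorem \ref{t:cover}. The paper's own proof is exactly these two sentences; your additional work (passing to $yf=(2-x^{-1}z)y-1$ to verify the hypotheses \eqref{eq:form1}--\eqref{eq:form2}, and the Mahler-measure evaluation) merely makes explicit details the paper leaves implicit.
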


	\begin{proof}[Proof of Corollary \ref{c:entropy}]
According to Theorem \ref{t:entropy}, $\mathsf{h}(\alpha _f)\ge \log 2$. Furthermore, since $\Sigma _2$ is a symbolic cover of $X_f$, $\mathsf{h}(\alpha _f)\le \log 2$.
	\end{proof}

	\begin{proof}[Proof of Theorem \ref{t:cover}]
If $\xi _g\colon \ell ^\infty (\h,\mathbb{Z})\longrightarrow X_f$ is the left-shift-equivariant homomorphism defined by Theorem \ref{t:mainmulti} and \eqref{eq:xig}, then Theorem \ref{t:codingmaptheo} (2) shows that $\xi _g(\Sigma _3)=X_f$, i.e., that $\Sigma _3$ is a symbolic cover of $X_f$. In order to prove that $\Sigma _2$ is also a symbolic cover of $X_f$ we use a modification of a toppling argument used in \cite{sv} in the process of showing that the $d$-dimensional critical sandpile model is an equal-entropy symbolic cover of the \textit{harmonic model}, the principal algebraic $\mathbb{Z}^d$-action defined by the graph-Laplacian of $\Z^d$.

Let $v \in \Sigma _3$. Every group element $\gamma\in\h$ can be written uniquely as a product $x^ly^mz^n$ with $(l,m,n)\in \Z^3$. For all $M>0$ let
	$$
A_M \coloneqq \{x^ly^mz^n \in \h : 0 \leq |l|\leq M, 0\leq |m| \leq M ,\, 0\leq |n|\leq M^2 \}.
	$$
We first perform topplings on $A_M$, defined by
	\begin{equation}
	\label{eq:toppling}
v\mapsto v-\gamma f^*\enspace \textup{for appropriate choices of} \enspace \gamma \in \h,
	\end{equation}
in such a way that the new configuration $\bar{v}_M$ has the following properties
	\begin{equation*}
(\bar{v}_M)_\gamma \in
	\begin{cases}
\{0,1\} & \text{for}\, \gamma \in A_M
	\\
\{0,\ldots, 4\} & \gamma \in \h \smallsetminus A_M .
	\end{cases}
	\end{equation*}
Note that topplings shift mass only in the positive $x$ and $y$ directions. The minimal number of topplings required to obtain such a point $\bar{v}$ is finite, and the finite subset of $\h$ which is affected by this toppling process is given by $\textup{supp}(v-\bar{v}_M)=\{\gamma \in \h:v_\gamma \ne \bar{v}_\gamma \}$. Let
	$$
B_M \coloneqq \textup{supp}(v-\bar{v}_M) \smallsetminus A_M \quad \text{and} \quad C_M \coloneqq \h \smallsetminus \textup{supp}(v-\bar{v}_M).
	$$
We know that $\xi_g (v)= \xi_g (\bar{v}_M)$ for all $M \geq 1$ since $v-\bar{v}_M \in (f^*)$. Next we define
	\begin{equation*}
(\tilde{v}_M)_\gamma \coloneqq
	\begin{cases}
(\bar{v}_M)_\gamma, & \text{for}\, \gamma \in A_M
	\\
v_\gamma & \text{otherwise.}
	\end{cases}
	\end{equation*}
Note that $(\tilde{v}_M)_\gamma=(\bar{v}_M)_\gamma$ for $\gamma \in A_M \cup C_M$ and $\textup{supp}(\tilde{v}_M - \bar v _M )\subset B_M$. Let $w=(1-z^{-1})^2 \cdot {f^*}^\sharp$ (cf. Theorem \ref{t:mainmulti}), and let and $u_M=\bar{v}_M - \tilde{v}_M$. Then
	\begin{equation*}
\bar{\xi}_g(u_M)_{\gamma'} = (u_M \cdot w)_{\gamma'} = \sum_{\gamma \in A_M} (u_M)_\gamma w_{\gamma^{-1}\gamma'}+\sum_{\gamma \in B_M} (u_M)_\gamma w_{\gamma^{-1}\gamma'}+\sum_{\gamma \in C_M} (u_M)_\gamma w_{\gamma^{-1}\gamma'}.
	\end{equation*}
The first and third sum do not contribute. For the second sum we make the following estimate for all $\gamma'\in\h$ and all $M >0$
	\begin{equation*}
\biggl | \sum_{\gamma \in B_M} (u_M)_\gamma w_{\gamma^{-1}\gamma'} \biggr | \leq \max_{\gamma \in B_M}|(u_M)_\gamma | \sum_{\gamma \in B_M} |w_{\gamma^{-1}\gamma'}| \leq 4 \sum_{\gamma \in \h \smallsetminus A_M} | w_{\gamma^{-1}\gamma'}|.
	\end{equation*}
The term on the right hand side of the last inequality goes to $0$ as $M \longrightarrow \infty$. From this we conclude that $\xi_g(\Sigma _2)$ is dense in $X_f$. Since $\Sigma _2$ is compact and $\xi _g$ is continuous by Theorem \ref{t:codingmaptheo} we conclude that $\xi_g(\Sigma _2) = X_f$.
	\end{proof}

	\section*{Appendix}
	\subsection*{The Proof of Theorem \ref{MainEstimate}}
	\label{ss:proof1}

Let $f=2-x-y$, and let ${f^*}^\sharp$ be defined by \eqref{eq:fundsol}. In order to find central multipliers of ${f^*}^\sharp$ (or, equivalently, of $f^\sharp$) we need to establish some properties of $q$-binomial coefficients. For this we recall the following theorem of Brunetti and Del Lungo \cite{BDL}.

	\begin{theo}
	\label{t:brunetti}
Let $\qbinom{n}{k}=\sum_{j=0}^{k(n-k)} c_j q^j$ be the q-binomial coefficient for $n\ge k\ge 0$ with $d= \gcd(n,k)$. Then
	$$
A_{n,k} = \qbinom{n}{k} \frac {1-q^d}{1-q^{n}} =\sum_{j} a_jq^j
	$$
is a polynomial with nonnegative integer coefficients.
	\end{theo}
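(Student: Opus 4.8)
The plan is to split the statement into two parts: that $A_{n,k}$ is a polynomial with integer coefficients, and that those coefficients are nonnegative. Since $d\mid n$ we may rewrite
\[
A_{n,k}=\qbinom{n}{k}\big/\bigl(1+q^{d}+q^{2d}+\cdots +q^{\,n-d}\bigr),
\]
because $\frac{1-q^{d}}{1-q^{n}}=1/(1+q^{d}+\cdots +q^{\,n-d})$. So I must show that this denominator divides $\qbinom{n}{k}$ in $\Z[q]$ and that the resulting quotient has nonnegative coefficients.

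For the integrality I would use the cyclotomic factorisation of the $q$-binomial coefficient. Writing $\Phi_{m}$ for the $m$-th cyclotomic polynomial, one has $\qbinom{n}{k}=\prod_{m\ge 2}\Phi_{m}(q)^{a_{m}}$ with $a_{m}=\lfloor n/m\rfloor-\lfloor k/m\rfloor-\lfloor (n-k)/m\rfloor\in\{0,1\}$, while $1+q^{d}+\cdots +q^{\,n-d}=\frac{1-q^{n}}{1-q^{d}}=\prod_{m\mid n,\ m\nmid d}\Phi_{m}(q)$. It therefore suffices to check that $a_{m}=1$ whenever $m\mid n$ but $m\nmid d$. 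For such an $m$ we have $m\nmid k$ (otherwise $m\mid\gcd(n,k)=d$), so $\{n/m\}=0$ while $\{k/m\}+\{(n-k)/m\}=1$; hence $a_{m}=1$, the division is exact, and $A_{n,k}\in\Z[q]$.

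Nonnegativity is the main obstacle, and here I would shift the burden entirely onto an identity by realising $A_{n,k}$ as an honest generating function. Consider the lattice paths from $(0,0)$ to $(k,n-k)$ built from unit east and north steps that stay weakly above the diagonal joining the two endpoints (rational Dyck paths), each weighted by $q^{\mathrm{stat}(P)}$ for a suitable nonnegative integer statistic $\mathrm{stat}$ (an area- or dinv-type statistic), and set $D_{n,k}(q)=\sum_{P}q^{\mathrm{stat}(P)}$. Then $D_{n,k}$ has nonnegative integer coefficients \emph{by construction}, so everything reduces to proving the identity $D_{n,k}=A_{n,k}$ for the correct choice of $\mathrm{stat}$, equivalently $\qbinom{n}{k}=(1+q^{d}+\cdots +q^{\,n-d})\,D_{n,k}$.

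To establish this identity I would use a $q$-analogue of the Dvoretzky--Motzkin cycle lemma. Let the cyclic group $C_{n}$ act by rotation on the full set of $\qbinom{n}{k}$ step-words, weighted by $q^{\mathrm{inv}}$ (the number of pairs consisting of a north step preceding an east step), so that $\sum q^{\mathrm{inv}}=\qbinom{n}{k}$. A single rotation changes $\mathrm{inv}$ by $+(n-k)$ or by $-k$ according to the step moved, hence by $n-k\equiv -k\pmod{n}$ in either case; since $\gcd(n-k,n)=d$, the values of $\mathrm{inv}$ modulo $n$ along each orbit fill out a coset of $d\Z/n\Z$. The plan is to use the cycle lemma to single out, in each orbit, the Dyck representatives and to aggregate their weights so that the global sum factors as $(1+q^{d}+\cdots +q^{\,n-d})\,D_{n,k}$. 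I would first settle the coprime case $d=1$, where $A_{n,k}=\qbinom{n}{k}/[n]_{q}$ is a rational $q$-Catalan number and each primitive orbit contains exactly one Dyck rotation, and then reduce general $d$ to it through the period/primitivity decomposition of the $C_{n}$-orbits. The delicate point---choosing the statistic and carrying out the weight-preserving bookkeeping so that the cyclic orbits combine to produce exactly the factor $1+q^{d}+\cdots +q^{\,n-d}$---is the real content, and is precisely the combinatorial theorem of Brunetti and Del Lungo \cite{BDL}.
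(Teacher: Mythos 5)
Your integrality half is correct and complete: for $m\mid n$ with $m\nmid d$ one necessarily has $m\nmid k$ (else $m\mid\gcd(n,k)=d$), hence $\{k/m\}+\{(n-k)/m\}=1$ and the Knuth--Wilf exponent $\lfloor n/m\rfloor-\lfloor k/m\rfloor-\lfloor (n-k)/m\rfloor$ equals $1$, so $(1-q^{n})/(1-q^{d})=\prod_{m\mid n,\,m\nmid d}\Phi_m(q)$ divides $\qbinom{n}{k}$ in $\Z[q]$. The nonnegativity half, however, is a genuine gap, and you concede it yourself: the statistic $\mathrm{stat}$ is never defined, the identity $D_{n,k}=A_{n,k}$ is never proved, and you end by declaring that this step ``is precisely the combinatorial theorem of Brunetti and Del Lungo'' --- that is, you invoke the statement being proved, which makes the argument circular rather than a proof. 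The sketch also would not work as stated: a rotation changes $\mathrm{inv}$ by $n-k$ or by $-k$, and these agree only modulo $n$, so the weights along a primitive orbit are not literally of the form $q^{c},q^{c+d},q^{c+2d},\dots$, and the orbit sum does not factor on the nose as $q^{\mathrm{stat}}\,(1+q^{d}+\cdots+q^{n-d})$. Even in the coprime case, where the cycle lemma does single out one dominating rotation per orbit at the counting level, exhibiting a statistic on rational Dyck paths with generating function $\qbinom{n}{k}(1-q)/(1-q^{n})$ is a hard combinatorial problem; the known proofs (Andrews \cite{Andrews} for $d=1$, Brunetti--Del Lungo \cite{BDL} in general) are not of this cyclic type.

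For comparison: the paper does not prove Theorem \ref{t:brunetti} either --- it quotes it from \cite{BDL} --- but its Lemma \ref{Aispol} exhibits exactly the elementary device that closes your gap, with your cyclotomic step supplying the needed polynomiality. From $A_{n,k}\,(1-q^{n})=\qbinom{n}{k}\,(1-q^{d})$ one reads off the coefficient recursion $a_j-a_{j-n}=c_j-c_{j-d}$, hence $a_j=\sum_{t\ge0}\bigl(c_{j-tn}-c_{j-tn-d}\bigr)$; unimodality of the $c_j$ (recorded in the introduction of the paper) gives $c_m\ge c_{m-d}$ for all $m\le r\coloneqq\lceil k(n-k)/2\rceil$ and therefore $a_j\ge0$ for $j\le r$, while $A_{n,k}$ is reciprocal of degree $k(n-k)+d-n$, which is strictly less than $k(n-k)$ when $0<k<n$ (the cases $k\in\{0,n\}$ being trivial), so the symmetry $a_j=a_{k(n-k)+d-n-j}$ settles the upper half because then $k(n-k)+d-n-j<r$. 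This three-line telescoping-plus-symmetry argument, riding on precisely the unimodality and reciprocity properties of Gaussian coefficients, replaces the speculative rational-Dyck-path and $q$-cycle-lemma program you outline and turns your proposal into a complete proof.
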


This theorem was established first in \cite{Andrews} for the case $\gcd(n,k)=1$. From now on we assume that $k >0$. We extend Theorem \ref{t:brunetti} slightly.
	\begin{lemm}
	\label{Aispol}
If $n\geq k$, $p\geq 1$ are such that
	$$
A_{n,k,p}(q)\coloneqq \qbinom {n}{k} \frac {1-q}{1-q^p}
	$$
is a polynomial, then $A_{n,k,p}$ is a polynomial with nonnegative integer coefficients.
	\end{lemm}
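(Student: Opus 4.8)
The plan is to combine the classical cyclotomic factorisation of $q$-binomial coefficients with Theorem~\ref{t:brunetti}. First I would record that $\frac{1-q}{1-q^{p}}=1/(1+q+\dots+q^{p-1})$, so that $A_{n,k,p}=\qbinom{n}{k}/(1+q+\dots+q^{p-1})$, and use the factorisation $\qbinom{n}{k}=\prod_{d\ge2}\Phi_d(q)^{e_d}$, where $\Phi_d$ is the $d$-th cyclotomic polynomial and $e_d=\lfloor n/d\rfloor-\lfloor k/d\rfloor-\lfloor (n-k)/d\rfloor$; here each $e_d\in\{0,1\}$ because $\lfloor a+b\rfloor-\lfloor a\rfloor-\lfloor b\rfloor\in\{0,1\}$ with $a=k/d,\ b=(n-k)/d$. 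Since $1+q+\dots+q^{p-1}=\prod_{d\mid p,\,d\ge2}\Phi_d(q)$, the hypothesis that $A_{n,k,p}$ is a polynomial is equivalent to $e_d=1$ for every $d\mid p$ with $d\ge2$; in that case the factors indexed by such $d$ cancel and $A_{n,k,p}=\prod_{d\ge2,\,d\nmid p}\Phi_d(q)^{e_d}$ is a product of \emph{distinct} cyclotomic polynomials. This already shows $A_{n,k,p}\in\Z[q]$, and makes clear that the entire content of the lemma is the nonnegativity of the coefficients, since individual factors such as $\Phi_6=1-q+q^2$ carry negative coefficients.

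To obtain nonnegativity I would reduce to Theorem~\ref{t:brunetti}. Writing $d_0=\gcd(n,k)$ and $A_{n,k}=\qbinom{n}{k}\frac{1-q^{d_0}}{1-q^{n}}$ for the Brunetti--Del Lungo polynomial, one has the identity $A_{n,k,p}=A_{n,k}\cdot\frac{(1-q)(1-q^{n})}{(1-q^{p})(1-q^{d_0})}$. The plan is to isolate first the favourable cases in which this correcting factor is itself a polynomial with nonnegative coefficients. The cleanest such case is $\gcd(n,k)=1$ together with $p\mid n$: then $A_{n,k}=\qbinom{n}{k}/(1+q+\dots+q^{n-1})$ is nonnegative by Theorem~\ref{t:brunetti} (and in particular $1+q+\dots+q^{n-1}$, hence $1+q+\dots+q^{p-1}$, divides $\qbinom{n}{k}$), the correcting factor collapses to $\frac{1-q^{n}}{1-q^{p}}=1+q^{p}+q^{2p}+\dots+q^{n-p}$, and $A_{n,k,p}$ appears as a product of two polynomials with nonnegative coefficients. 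More generally, whenever $p$ is compatible with $n$ and $d_0$ so that the correcting ratio is a genuine nonnegative $q$-integer, the lemma follows by the same elementary bookkeeping.

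The hard part — and the step I expect to be the main obstacle — is the remaining range of $p$, typically $p$ prime with $p\nmid n$, already visible in the smallest genuine instance $\qbinom{7}{5}/(1+q+q^{2})=1+q^{2}+q^{3}+q^{4}+q^{5}+q^{6}+q^{8}$, where the correcting factor is not a polynomial and no purely algebraic manipulation applies. One cannot argue by peeling off one cyclotomic factor at a time, since $(1+q^{2}+q^{4})/(1+q+q^{2})=1-q+q^{2}$ already shows that dividing one nonnegative polynomial by another need not remain nonnegative; it is the simultaneous division by the \emph{full} $q$-integer $1+q+\dots+q^{p-1}$ that restores positivity. The robust route is therefore to transport the combinatorial positivity behind Theorem~\ref{t:brunetti} to this setting: read the coefficients of $\qbinom{n}{k}$ as the number of partitions $\lambda$ fitting inside a $k\times(n-k)$ rectangle, graded by area $|\lambda|$, and construct a fixed-point-free action on these partitions whose orbits have areas forming a block $\{a,a+1,\dots,a+p-1\}$ of $p$ \emph{consecutive} integers, so that each orbit contributes $q^{a}(1+q+\dots+q^{p-1})$ to $\qbinom{n}{k}$. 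Summing over orbits then displays $A_{n,k,p}=\sum_{\mathrm{orbits}}q^{a}$ with nonnegative integer coefficients, the decomposition existing exactly when $1+q+\dots+q^{p-1}$ divides $\qbinom{n}{k}$. The difficulty is that Brunetti--Del Lungo produce orbits spread in arithmetic progression with common difference $d_0=\gcd(n,k)$, whereas here the orbits must advance in unit steps and uniformly in $p$; constructing such an action is the crux of the argument.
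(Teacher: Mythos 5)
Your preliminary reductions are all correct as far as they go: the cyclotomic factorisation does show that $A_{n,k,p}$ is a product of \emph{distinct} cyclotomic polynomials and hence lies in $\Z[q]$, and your example $\qbinom{7}{5}/(1+q+q^{2})$ correctly isolates where the difficulty sits. But the proposal does not prove the lemma. In the case you yourself identify as the crux ($p$ not compatible with $n$ and $\gcd(n,k)$, so the ``correcting factor'' is not a nonnegative $q$-integer), everything is delegated to a fixed-point-free action on partitions in the $k\times(n-k)$ box whose orbits have $p$ \emph{consecutive} areas — and you do not construct this action. That construction is not a routine adaptation of Brunetti--Del Lungo: producing explicit combinatorial witnesses for divisibility statements of this kind is precisely the hard open problem that Theorem~\ref{t:brunetti} only partially resolves (and the paper's appendix even closes with a conjecture in this direction). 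So the argument stops exactly where the content of the lemma begins.

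The paper's proof sidesteps all of this with an elementary argument using only the unimodality and reciprocity of the coefficients $c_j$ of $\qbinom{n}{k}$ recorded in the introduction. Comparing coefficients in $(1-q^{p})A_{n,k,p}=(1-q)\qbinom{n}{k}$ gives $a_j-a_{j-p}=c_j-c_{j-1}$, hence
$$
a_j=\sum_{t\ge0}\bigl(c_{j-tp}-c_{j-tp-1}\bigr),
$$
with $c_m=0$ for $m<0$. For $j<k(n-k)/2$ every index $j-tp$ lies in the range where the $c_i$ are nondecreasing, so every summand is nonnegative and $a_j\ge0$. Since $\deg A_{n,k,p}=k(n-k)+1-p$, its midpoint $\tfrac12\bigl(k(n-k)+1-p\bigr)$ is strictly less than $k(n-k)/2$ (as $p\ge2$), so this already covers the first half of the coefficients of the palindromic polynomial $A_{n,k,p}$, and the remaining ones follow from the symmetry $a_j=a_{\deg A_{n,k,p}-j}$. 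No combinatorial model and no case analysis on $p$, $n$ or $\gcd(n,k)$ is needed; I would replace the orbit-construction plan by this observation.
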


	\begin{proof}
If $p=1$ there is nothing to prove. Assume $p>1$. The coefficients $\{c_j\}$ of $\qbinom{n}{k}$ and $\{a_j\}$ of $A_{n,k,p}$ satisfy the relation
	$$
a_j-a_{j-p} =c_j-c_{j-1}
	$$
for every $j$. Thus, for every $j$,
	$$
a_j=\sum_{t\ge 0} (c_{j-tp}-c_{j-tp-1}),
	$$
where $c_m=0$ for $m<0$. Using the unimodality property of the $\{c_j\}$, one can conclude that
	$$
a_j\ge 0\enspace \textup{for all}\enspace  j<\frac {k(n-k)}{2}.
	$$
Since $A_{n,k,p}(q)=\sum a_jq^j$ is also reciprocal, i.e. $A_{n,k,p}(\frac 1 q) = q^{-\textup{deg}(A_{n,k,p})} A_{n,k,p}(q)$, and the coefficients are centrally symmetric, i.e.,
	$$
\frac 12\bigl( k(n-k)-p+1\bigr)< \frac {k(n-k)}{2},
	$$
we can conclude that $A_{n,k,p}$ has nonnegative integer coefficients.
	\end{proof}

	\begin{coro}
	\label{Aestimate}
Under the assumption of the previous lemma, one has
	$$
\|A_{n,k,p}\|_1 = \frac 1p\binom{n}{k}.
	$$
and hence
	$$
\biggl\|\qbinom{n}{k} \cdot(1-q)\biggr\|_1\leq 2\|A_{n,k,p}\|_1 =\frac 2p \binom{n}{k}.
	$$
	\end{coro}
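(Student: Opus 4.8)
The plan is to exploit the positivity supplied by Lemma \ref{Aispol}: since $A_{n,k,p}$ is a polynomial in $q$ with \emph{nonnegative} integer coefficients, its $\ell^1$-norm equals the sum of its coefficients, which is precisely its value at $q=1$. Thus computing $\|A_{n,k,p}\|_1$ reduces to evaluating $A_{n,k,p}(1)$.

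First I would rewrite the factor $\frac{1-q}{1-q^p}$ as $\frac{1}{1+q+\cdots+q^{p-1}}$, so that one has the polynomial identity $A_{n,k,p}(q)\,(1+q+\cdots+q^{p-1}) = \qbinom{n}{k}$. Evaluating both sides at $q=1$ and using the elementary fact that $\qbinom{n}{k}$ specialises to the ordinary binomial coefficient $\binom{n}{k}$ at $q=1$ (immediate from the product formula \eqref{eq:q-binomial}, since each factor $\frac{1-q^{n-i}}{1-q^{i+1}}$ tends to $\frac{n-i}{i+1}$), the left-hand side becomes $p\cdot A_{n,k,p}(1)$. Hence $A_{n,k,p}(1)=\frac{1}{p}\binom{n}{k}$, and combined with the first paragraph this yields $\|A_{n,k,p}\|_1 = \frac{1}{p}\binom{n}{k}$.

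For the second assertion I would note that, directly from the definition of $A_{n,k,p}$, one has $\qbinom{n}{k}(1-q) = A_{n,k,p}(q)\,(1-q^p)$. Since the $\ell^1$-norm is submultiplicative under convolution, i.e. under multiplication of these central polynomials, and $\|1-q^p\|_1 = 2$, we obtain $\|\qbinom{n}{k}(1-q)\|_1 \le \|A_{n,k,p}\|_1\cdot\|1-q^p\|_1 = \frac{2}{p}\binom{n}{k}$, which is the claimed bound.

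There is no serious obstacle here: the whole argument rests on the nonnegativity of the coefficients of $A_{n,k,p}$ already established in Lemma \ref{Aispol}. The only point needing a little care is the passage to $q=1$, where one must argue through the polynomial identity $A_{n,k,p}(q)(1+q+\cdots+q^{p-1})=\qbinom{n}{k}$ rather than substituting directly into $\frac{1-q}{1-q^p}$; phrasing the evaluation this way sidesteps the apparent $0/0$ in both factors.
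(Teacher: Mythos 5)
Your proof is correct and follows exactly the argument the paper intends (the corollary is stated without proof as an immediate consequence of Lemma \ref{Aispol}): nonnegativity of the coefficients reduces $\|A_{n,k,p}\|_1$ to the evaluation $A_{n,k,p}(1)=\frac1p\binom{n}{k}$ via the identity $A_{n,k,p}(q)(1+q+\cdots+q^{p-1})=\qbinom{n}{k}$, and the second bound follows from submultiplicativity of the $\ell^1$-norm together with $\|1-q^p\|_1=2$.
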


	\begin{lemm}
	\label{primedivqbin}
Suppose that $p> k$ is a prime dividing $\binom{n}{k}$. Then
	$$
A_{n,k,p}\coloneqq \qbinom {n}{k} \frac {1-q}{1-q^p}
	$$
is a polynomial.
	\end{lemm}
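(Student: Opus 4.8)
The plan is to prove that $A_{n,k,p}$ is a polynomial by showing that the factor $(1-q^p)/(1-q)=1+q+\cdots +q^{p-1}$ divides $\qbinom{n}{k}$ in $\mathbb{Z}[q]$. Since $p$ is prime, this factor is exactly the $p$-th cyclotomic polynomial $\Phi_p(q)$, which is irreducible over $\mathbb{Q}$ and monic with integer coefficients. By Gauss's lemma, $\Phi_p\mid \qbinom{n}{k}$ in $\mathbb{Z}[q]$ if and only if it divides $\qbinom{n}{k}$ in $\mathbb{Q}[q]$, i.e.\ if and only if the multiplicity of $\Phi_p$ as a factor of the polynomial $\qbinom{n}{k}$ is at least one. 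So the whole statement reduces to computing that multiplicity and checking it is positive under the hypotheses.

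First I would compute the multiplicity from the product formula
$$
\qbinom{n}{k}=\frac{\prod_{i=1}^{n}(1-q^i)}{\prod_{i=1}^{k}(1-q^i)\,\prod_{i=1}^{n-k}(1-q^i)}.
$$
Because $1-q^i=-\prod_{d\mid i}\Phi_d(q)$, the factor $\Phi_p$ divides $1-q^i$, with multiplicity exactly one, precisely when $p\mid i$. Hence the $\Phi_p$-adic valuation of $\prod_{i=1}^{m}(1-q^i)$ equals $\lfloor m/p\rfloor$, and the multiplicity of $\Phi_p$ in $\qbinom{n}{k}$ is
$$
\Bigl\lfloor \tfrac{n}{p}\Bigr\rfloor-\Bigl\lfloor \tfrac{k}{p}\Bigr\rfloor-\Bigl\lfloor \tfrac{n-k}{p}\Bigr\rfloor.
$$
Writing $n=ap+b$ with $0\le b<p$ and using the hypothesis $k<p$ (so that $\lfloor k/p\rfloor=0$), a short case distinction shows this nonnegative integer equals $1$ when $b<k$ and $0$ when $b\ge k$. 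Thus $\Phi_p\mid \qbinom{n}{k}$ if and only if $n\bmod p<k$.

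It then remains to translate the arithmetic hypothesis $p\mid \binom{n}{k}$ into the condition $b<k$. Since $k<p$ is a single base-$p$ digit, Lucas' theorem gives $\binom{n}{k}\equiv\binom{b}{k}\pmod p$ with $b=n\bmod p$. As $0\le b<p$, the ordinary binomial coefficient $\binom{b}{k}$ is either $0$ (when $b<k$) or a positive integer coprime to $p$ (when $b\ge k$, because no factor $p$ can occur in $b!$ for $b<p$). Hence $p\mid\binom{n}{k}$ forces $b<k$, which by the previous paragraph yields $\Phi_p\mid\qbinom{n}{k}$ and therefore that $A_{n,k,p}$ is a polynomial.

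I do not expect a serious obstacle: the argument combines the product formula for Gaussian binomials with Lucas' theorem. The only point needing care is the bookkeeping in the floor-function multiplicity, in particular verifying that for $b<k$ one has $\lfloor (n-k)/p\rfloor=a-1$, which is exactly where the assumption $p>k$ enters. As an alternative one can bypass the multiplicity computation and instead invoke the $q$-Lucas theorem to evaluate $\qbinom{n}{k}$ at a primitive $p$-th root of unity, reducing directly to $\qbinom{b}{k}$, which vanishes identically when $b<k$.
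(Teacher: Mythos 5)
Your proof is correct and follows essentially the same route as the paper: both identify $(1-q^p)/(1-q)$ with the $p$-th cyclotomic polynomial and show its multiplicity in $\qbinom{n}{k}$, namely $\lfloor n/p\rfloor-\lfloor k/p\rfloor-\lfloor (n-k)/p\rfloor$, equals $1$ under the hypotheses. The only (cosmetic) difference is that you translate $p\mid\binom{n}{k}$ into $n\bmod p<k$ via Lucas' theorem, whereas the paper observes directly that $p$ must divide one of $n-k+1,\dots,n$ and then does the same floor-function bookkeeping.
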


	\begin{proof}
One has
	$$
\qbinom{n}{k} =\prod_{j=1}^n \phi_j(q)^ {\lfloor\frac nj \rfloor-\lfloor\frac kj \rfloor-\lfloor\frac {n-k}j \rfloor } ,
	$$
 by \cite{KW}, where $\phi_j(q)$ is the $j$-th cyclotomic polynomial given by
	$$
\phi_j(q)= \prod_ {1 \leq k \leq j,\,\gcd (k,j)=1} (q-e^\frac{2 \pi i k}{j} ),
	$$
where each coefficient is either $0$ or $1$. Let
	$$
J_{n,k} =\biggl\{ j>1: \quad \Bigl\lfloor\frac nj \Bigr\rfloor-\Bigl\lfloor\frac kj \Bigr\rfloor-\Bigl\lfloor\frac {n-k}j \Bigr\rfloor=1\biggr\}.
	$$
Then
	$$
\qbinom{n}{k} =\prod_{j\in J_{n,k}} \phi_j(q).
	$$
Now recall Legendre's theorem :
	$$
n! = \prod_{p} p^{\lfloor\frac n{p}\rfloor+\lfloor\frac n{p^2}\rfloor+\,\cdots},
	$$
where the product runs over all primes $p$ with $p\leq n$. Then for every prime $p$, the maximal degree $\alpha$ such that $p^\alpha $ divides ${n\choose k}$ is given by
	$$
\alpha = \Bigl\lfloor\frac n{p} \Bigr\rfloor-\Bigl\lfloor\frac k{p} \Bigr\rfloor-\Bigl\lfloor\frac {n-k}{p} \Bigr\rfloor+ \Bigl\lfloor\frac n{p^2} \Bigr\rfloor-\Bigl\lfloor\frac k{p^2} \Bigr\rfloor-\Bigl\lfloor\frac {n-k}{p^2} \Bigr\rfloor+ \,\cdots
	$$
Suppose that $p$ is a prime $> k$ dividing $\binom{n}{k}$. We will show that
	\begin{equation}
	\label{need}
\Bigl\lfloor\frac n{p} \Bigr\rfloor-\Bigl\lfloor\frac k{p} \Bigr\rfloor-\Bigl\lfloor\frac {n-k}{p} \Bigr\rfloor= \Bigl\lfloor\frac n{p} \Bigr\rfloor-\Bigl\lfloor\frac {n-k}{p} \Bigr\rfloor=1 .
	\end{equation}
Since $p$ divides $\binom{n}{k}$, $p$ must divide at least one of the integers $n-k+1,\ldots, n$. Let $p$ divide $n-j$ for some $j\in\{0,\ldots, k-1\}$. Then the equation $n-j=cp$ for some $c\in\mathbb N$ implies that $n-k <cp$ and hence $\Bigl\lfloor\frac {n-k}{p} \Bigr\rfloor\le c-1$.

At the same time, $n=cp+j$ and hence $\lfloor\frac n{p} \rfloor\ge c$. Therefore \eqref{need} holds, and hence $p\in J_{n,k}$.

\smallskip For every $n\ge1$ we have that $q^n-1 = \prod_{j :\,j|n} \phi_j(q)$ (cf. \cite{KW}). In particular, if $p$ is a prime, then $q^p-1=\phi_p(q) (q-1)$. One has
	$$
\qbinom{n}{k} =\phi_p(q)\cdot \prod_{j\in J_{n,k}\smallsetminus\{p\}} \phi_j(q) = \frac{1-q^p}{1-q} \prod_{j\in J_{n,k}\smallsetminus\{p\}} \phi_j(q),
	$$
and hence
	$$
\qbinom{n}{k} \frac{1-q}{1-q^p}=\prod_{j\in J_{n,k}\smallsetminus\{p\}} \phi_j(q)
	$$
is a polynomial.
	\end{proof}

A classical theorem, discovered independently by J. Sylvester \cite{Sylvester} and Schur \cite{Schur}, states that the product of $k$ consecutive integers, each greater than $k$, has a prime divisor greater than $k$. A consequence of this fact is that, if $n\geq 2k$, then
	$$
\binom{n}{k}\text{ has a prime factor }p> k.
	$$
Even stronger (cf. \cite{Faulkner}): if $n\ge 2k$, then $ \binom{n}{k}\text{ has a prime factor }p\ge \frac 75 k $. Take $p$ to be the largest prime factor of $\binom{n}{k}$. Using Lemma \ref{primedivqbin} and Lemma \ref{Aispol} (Corollary \ref{Aestimate}), we conclude that
	\begin{equation}
	\label{estiesti}
\biggl\|\qbinom{n}{k}\cdot(1-q)\biggr\|_1 \le \frac 2p\binom{n}{k} \le \frac 2k\binom{n}{k}.
	\end{equation}

\medskip \subsection*{A central multiplier of the formal inverse of $f=2-x-y$}
	\label{ss:multiplier}

Let ${f^*}^\sharp$ be given by \eqref{eq:fundsol}. The estimate in (\ref{estiesti}) allows us to find the minimal central multiplier $g$ such that $g ^*\cdot {f^*}^\sharp \in \ell^1(\h)$.

	\begin{lemm}
For $n >0$,
	$$
\sum_{k=1}^{\lceil n/2\rceil} \frac 1k \binom{n}{k}=\mathcal O\Bigl( 2^n\frac { \log n}{n}\Bigr),
	$$
and
	$$
\mathsf S(n)\coloneqq \sum_{k=0}^n \biggl\|\qbinom{n}{k}\cdot(1-q)\biggr\|_1 =\mathcal O\Bigl( 2^n\frac { \log n}{n}\Bigr).
	$$
	\end{lemm}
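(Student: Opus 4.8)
The plan is to derive the estimate for $\mathsf S(n)$ from the purely binomial estimate, and then to prove the latter by splitting the range of summation so as to isolate the dominant binomial coefficients near $k=n/2$.

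First I would reduce the statement about $\mathsf S(n)$ to the first assertion. Since the $q$-binomial coefficients are symmetric, $\qbinom{n}{k}=\qbinom{n}{n-k}$, the summand $\|\qbinom{n}{k}\cdot(1-q)\|_1$ is invariant under $k\mapsto n-k$, so
\[
\mathsf S(n)\le 2\sum_{k=0}^{\lfloor n/2\rfloor}\Bigl\|\qbinom{n}{k}\cdot(1-q)\Bigr\|_1 .
\]
The term $k=0$ contributes $\|1-q\|_1=2$, while for $1\le k\le\lfloor n/2\rfloor$ we have $n\ge 2k$, so the estimate \eqref{estiesti} applies and gives $\|\qbinom{n}{k}\cdot(1-q)\|_1\le\frac2k\binom{n}{k}$. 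Hence
\[
\mathsf S(n)\le 4+4\sum_{k=1}^{\lfloor n/2\rfloor}\frac1k\binom{n}{k},
\]
and it suffices to bound $\sum_{k=1}^{\lceil n/2\rceil}\frac1k\binom{n}{k}$, which is the first assertion of the lemma.

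For this binomial sum I would split the range at $k=\lfloor n/3\rfloor$. On the upper range $\lfloor n/3\rfloor\le k\le\lceil n/2\rceil$ one has $\frac1k\le\frac6n$ for all large $n$ (since $\lfloor n/3\rfloor\ge n/6$), whence
\[
\sum_{k=\lfloor n/3\rfloor}^{\lceil n/2\rceil}\frac1k\binom{n}{k}\le\frac6n\sum_{k=0}^{n}\binom{n}{k}=\frac{6\cdot 2^n}{n}=\mathcal O\Bigl(\frac{2^n}{n}\Bigr).
\]
On the lower range $1\le k<\lfloor n/3\rfloor$ the factor $\frac1k$ can be large, but here the binomials are exponentially smaller than $2^n$. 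Using the elementary estimate $\sum_{k=0}^{\lfloor\lambda n\rfloor}\binom{n}{k}\le 2^{nH(\lambda)}$ for $0\le\lambda\le\frac12$, where $H(\lambda)=-\lambda\log_2\lambda-(1-\lambda)\log_2(1-\lambda)$ (which follows at once from $1=(\lambda+(1-\lambda))^n\ge 2^{-nH(\lambda)}\sum_{k\le\lambda n}\binom{n}{k}$), I would take $\lambda=\frac13$ to obtain
\[
\sum_{k=1}^{\lfloor n/3\rfloor}\frac1k\binom{n}{k}\le\sum_{k=0}^{\lfloor n/3\rfloor}\binom{n}{k}\le 2^{nH(1/3)},\qquad H(1/3)=\log_2 3-\tfrac23<1 .
\]
Since $H(1/3)<1$, this is $o(2^n/n)$, and adding the two ranges gives $\sum_{k=1}^{\lceil n/2\rceil}\frac1k\binom{n}{k}=\mathcal O(2^n/n)$, which is in particular $\mathcal O(2^n\log n/n)$. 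Feeding this back into the reduction above yields $\mathsf S(n)=\mathcal O(2^n\log n/n)$ as well.

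I do not expect a genuine obstacle once \eqref{estiesti} is in hand; the two points that require care are that \eqref{estiesti} is valid exactly on the range $k\le n/2$ produced by the symmetry reduction (its hypothesis $n\ge 2k$ matches precisely), and that the small-$k$ tail, where $\frac1k$ is unbounded, must be controlled through the exponential smallness of the binomials rather than through $\frac1k$. I also note that this argument in fact proves the sharper bound $\mathcal O(2^n/n)$; the additional factor $\log n$ in the statement is not needed, but the weaker claim follows a fortiori.
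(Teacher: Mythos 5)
Your argument is correct, and the reduction of $\mathsf S(n)$ to the binomial sum (symmetry of the $q$-binomials plus the bound \eqref{estiesti} on the range $k\le n/2$ where its hypothesis holds) is exactly the paper's reduction, just performed at the start rather than at the end. Where you genuinely diverge is in the proof of the core estimate $\sum_{k=1}^{\lceil n/2\rceil}\frac1k\binom{n}{k}=\mathcal O(2^n\log n/n)$: the paper applies the discrete Chebyshev sum inequality with $p_k=1$, $a_k=1/k$, $b_k=\binom{n}{k}$ to pull out the factor $\frac1{\lceil n/2\rceil}\sum_{k\le\lceil n/2\rceil}\frac1k=\mathcal O(\log n/n)$ against $\sum_k\binom{n}{k}\le 2^n$, which is a one-line argument but necessarily carries the harmonic-sum factor $\log n$. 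You instead split the range at $k=\lfloor n/3\rfloor$, control the upper range by $\frac1k=\mathcal O(1/n)$ and the lower range by the entropy bound $\sum_{k\le n/3}\binom{n}{k}\le 2^{nH(1/3)}$ with $H(1/3)=\log_2 3-\tfrac23<1$; all steps check out (the binomial-tail bound, the value of $H(1/3)$, and the fact that $2^{nH(1/3)}=o(2^n/n)$ are all correct). The payoff is the sharper and in fact optimal bound $\mathcal O(2^n/n)$ — the true order of $\sum_{k}\frac1k\binom nk$ is $\Theta(2^n/n)$ — at the cost of a slightly longer case split. The improvement propagates harmlessly: in Proposition \ref{p:2-x-y} one would get $\bigl(\mathsf S(k)\bigr)^2=\mathcal O(2^{2k}/k^2)$ instead of $\mathcal O(2^{2k}\log^2k/k^2)$, and the convergence in \eqref{even} holds either way, so for the purposes of Theorem \ref{t:mainmulti} the two versions are interchangeable.
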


	\begin{proof}
First, recall the discrete Chebyshev inequality: if $a_1\ge a_2\ge \ldots \ge a_m\ge 0$, $0\le b_1\le b_2\le \ldots \le b_m$, then for any $p_1,\ldots, p_m\ge 0$, one has
	$$
\sum_{k} p_k\sum_{k} p_k a_k b_k\le \sum_{k} p_k a_k\sum_{k} p_k b_k.
	$$
Let $p_k=1$, $a_k=1/k$, $b_k=\binom{n}{k}$, then
	$$
\sum_{k=1}^{\lceil n/2\rceil } \frac 1k \binom{n}{k} \le\frac 1{\lceil n/2\rceil } \sum_{k=1}^{\lceil n/2\rceil} \frac 1k \sum_{k=1}^{\lceil n/2\rceil} \binom{n}{k}
	$$
and the first result follows.

\smallskip Furthermore,
	\begin{align*}
\sum_{k=0}^n \biggl\|\qbinom{n}{k}\cdot(1-q)\biggr\|_1& =4+ \sum_{k=1}^{n-1} \biggl\|\qbinom{n}{k}\cdot(1-q)\biggr\|_1 \le 4+ 2\sum_{k=1}^{\lceil n/2\rceil} \biggl\|\qbinom{n}{k}\cdot(1-q)\biggr\|_1
	\\
&\le 4+ 4\sum_{k=1}^{\lceil n/2\rceil} \frac 1{k}\binom{n}{k}= \mathcal O\Bigl( 2^n\frac { \log n}{n}\Bigr). \qedhere\end{align*}
\renewcommand{\qedsymbol}{}
	\end{proof}

	\begin{prop}
	\label{p:2-x-y}
Let $f=2-x-y$. Then $(1-z^{-1})^2$ is a central multiplier of ${f^*}^\sharp$ in \eqref{eq:fundsol}.
	\end{prop}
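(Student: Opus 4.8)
The plan is to check the two defining properties of a central multiplier for the candidate $g=(1-z^{-1})^2$: that $g$ is central (so that $g\cdot{f^*}^\sharp={f^*}^\sharp\cdot g$), and that this product lies in $\ell^1(\h,\R)$. The first property is immediate and carries no content: by Subsection \ref{ss:heisenberg} the center of $\h$ is generated by $z$, so any Laurent polynomial in $z^{-1}$ is central; hence $g\in\Z[\h]$ is central and $g\cdot{f^*}^\sharp={f^*}^\sharp\cdot g={f^*}^\sharp\cdot(1-q)^2$ with $q=z^{-1}$. Thus the whole proposition collapses to the summability statement $\|{f^*}^\sharp\cdot(1-q)^2\|_1<\infty$, which is exactly the estimate \eqref{eq:estimate1} underlying Theorem \ref{t:mainmulti}.

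To establish that bound I would expand ${f^*}^\sharp$ via \eqref{eq:fundsol} and apply the triangle inequality, giving
\[
\bigl\|{f^*}^\sharp\cdot(1-q)^2\bigr\|_1\le\sum_{n=0}^\infty\frac{1}{2^{n+1}}\sum_{k=0}^n\bigl\|x^ky^{n-k}\,\qbinom{n}{k}(1-q)^2\bigr\|_1.
\]
The key simplification is that left multiplication by the single group element $x^ky^{n-k}$ merely permutes coordinates and hence acts isometrically on $\ell^1(\h,\R)$; since $\qbinom{n}{k}(1-q)^2$ is a polynomial in the central variable $z^{-1}=q$, each summand equals $\bigl\|\qbinom{n}{k}(1-q)^2\bigr\|_1$. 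The inner sum over $k$ is then precisely the quantity bounded by the Main Estimate, so applying Theorem \ref{MainEstimate} yields $\sum_{k=0}^n\|\qbinom{n}{k}(1-q)^2\|_1=\mathcal{O}\bigl(2^n\log^2(n)/n^2\bigr)$ for $n>0$.

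Substituting this back, the factor $2^n$ cancels against the weight $2^{-(n+1)}$, leaving a tail of order $\sum_{n\ge1}\log^2(n)/n^2$, which converges; the single $n=0$ term is the harmless finite constant $\tfrac12\|(1-q)^2\|_1=2$. Hence $g\cdot{f^*}^\sharp\in\ell^1(\h,\R)$, so $g=(1-z^{-1})^2$ is a central multiplier of ${f^*}^\sharp$, and this simultaneously completes the proof of Theorem \ref{t:mainmulti}. The entire difficulty is concentrated in Theorem \ref{MainEstimate}: once that combinatorial $q$-binomial estimate is in hand, the proposition follows by the routine reduction just described. What makes the scheme work is the gain of \emph{two} powers of $n$ in the Main Estimate, one for each factor $(1-q)$; a single factor only gives $\mathcal{O}\bigl(2^n\log(n)/n\bigr)$ (the preceding lemma), whose rescaled tail $\sum_n\log(n)/n$ diverges. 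This is exactly why the square $(1-z^{-1})^2$, and not $(1-z^{-1})$, is the minimal multiplier that produces a summable homoclinic point.
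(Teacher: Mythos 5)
Your reduction is correct as far as it goes: centrality of $(1-z^{-1})^2$ is immediate, left multiplication by the group element $x^ky^{n-k}$ is an $\ell^1$-isometry so each summand collapses to $\|\qbinom{n}{k}(1-q)^2\|_1$, and the weight $2^{-(n+1)}$ against a bound of order $2^n\log^2(n)/n^2$ gives a convergent tail. But this is exactly the reduction the paper itself records in \eqref{eq:estimate1} immediately after Theorem \ref{t:mainmulti}, and it is circular relative to the paper's logical structure: Theorem \ref{MainEstimate} is announced in Section \ref{s:section4} as something that ``will be established in the appendix'', and the place where it is actually established is the proof of Proposition \ref{p:2-x-y}. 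By invoking Theorem \ref{MainEstimate} as a black box you have deferred the entire content of the statement you were asked to prove; your (accurate) closing remark that ``the entire difficulty is concentrated in Theorem \ref{MainEstimate}'' concedes as much.

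The missing argument is the passage from the one-factor bound to the two-factor bound. The appendix first proves, via the cyclotomic factorisation of $\qbinom{n}{k}$ and the Sylvester--Schur theorem, that $\bigl\|\qbinom{n}{k}\cdot(1-q)\bigr\|_1\le\frac2k\binom{n}{k}$ for $n\ge2k$, whence $\mathsf S(n)\coloneqq\sum_{k=0}^n\bigl\|\qbinom{n}{k}\cdot(1-q)\bigr\|_1=\mathcal O\bigl(2^n\log n/n\bigr)$ --- the single-factor estimate that, as you correctly note, is not summable after rescaling. The key step, entirely absent from your proposal, is the $q$-Vandermonde identity $\qbinom{2k}{k-j}=\sum_{i=0}^{k-j}q^{x(k,j,i)}\qbinom{k}{i}\qbinom{k}{i+j}$, which converts the $(1-q)^2$-norm of a degree-$2k$ coefficient into a sum of products of two $(1-q)$-norms of degree-$k$ coefficients; summing over $j$ yields $\sum_{j=0}^{2k}\bigl\|\qbinom{2k}{j}\cdot(1-q)^2\bigr\|_1\le(\mathsf S(k))^2=\mathcal O\bigl(2^{2k}\log^2k/k^2\bigr)$, and odd degrees follow from the $q$-Pascal recursion $\qbinom{2k+1}{j}=\qbinom{2k}{j}+q^{2k+1-j}\qbinom{2k}{j-1}$. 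That squaring trick \emph{is} the proof of the proposition (and of Theorem \ref{MainEstimate}); without it your argument adds nothing to what \eqref{eq:estimate1} already states.
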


	\begin{proof}
We recall the following identities, e.g \cite{KC}:
	$$
\qbinom{n+1}{k+1} =\frac{1-q^{n+1}}{1-q^{n-k}} \qbinom{n}{k+1}
	$$
and
	$$
\qbinom{n+1}{k+1} = \frac{1-q^{n-k+1}}{1-q^{k+1}} \qbinom{n+1}{k} .
	$$
Another important identity is the \textit{q-Vandermonde formula}
	$$
\qbinom{m+n}{k} = \sum_{j} \qbinom{m}{k-j} \qbinom{n}{j} q^{j(m-k+j)} .
	$$
The nonzero contributions to this sum come from values of $j$ such that the $q$-binomial coefficients on the right side are nonzero, that is, from
	$$
\max(0, k - m) \le j \le \min(n, k).
	$$
The Vandermonde $q$-binomial identity, for $k\ge j\geq 0$, gives us
	$$
\qbinom{2k}{k-j}=\sum_{i=0}^{k-j} q^{x(k,j,i)}\qbinom{k}{i}\qbinom{k}{k-j-i}= \sum_{i=0}^{k-j} q^{x(k,j,i)}\qbinom{k}{i}\qbinom{k}{i+j}.
	$$
Thus
	$$
\biggl\|\qbinom{2k}{k-j}\cdot(1-q)^{2}\biggr\| \le \sum_{i=0}^{k-j} \biggl\|\qbinom{k}{i}\cdot(1-q)\|_1 \biggl\|\qbinom{k}{i+j}\cdot(1-q)\biggr\|_1,
	$$
and hence,
	\begin{align*}
\sum_{j=0}^{k}&\biggl\|\qbinom{2k}{k-j}\cdot(1-q)^{2}\|_1\le \sum_{j=0}^{k} \sum_{i=0}^{k-j} \biggl\|\qbinom{k}{i}\cdot(1-q)\|_1 \biggl\|\qbinom{k}{i+j}\cdot(1-q)\biggr\|_1
	\\
&= \sum_{m=0}^k \biggl\|\qbinom{k}{m}\cdot(1-q)\biggr\|_1 \sum_{i=0}^m \biggl\|\qbinom{k}{i}\cdot(1-q)\biggr\|_1
	\\
&\le \sum_{m=0}^k \biggl\|\qbinom{k}{m}\cdot(1-q)\biggr\|_1 \sum_{i=0}^k \biggl\|\qbinom{k}{i}\cdot(1-q)\biggr\|_1 = \bigl( \mathsf S(k)\bigr)^2=\mathcal O\Bigl(2^{2k} \frac {\log^2 k}{k^2}\Bigr).
	\end{align*}
It follows that
	\begin{equation}
	\label{even}
\sum_{k} \frac 1{2^{2k}} \sum_{j=0}^{k}\biggl\|\qbinom{2k}{k-j}\cdot(1-q)^{2} \biggr\|_1<\infty,
	\end{equation}
and hence
	$$
\sum_{k} \frac 1{2^{2k}} \sum_{j=0}^{2k}\biggl\|\qbinom{2k}{j}\cdot(1-q)^{2} \biggr\|_1<\infty .
	$$
For $1\le j\le 2k$,
	$$
\qbinom{2k+1}{j} =\qbinom{2k}{j}+q^{2k+1-j}\qbinom{2k}{j-1}
	$$
Thus
	$$
\norm {\qbinom{2k+1}{j}\cdot(1-q)^2}\le \norm {\qbinom{2k}{j}\cdot(1-q)^2}+\norm {\qbinom{2k}{j-1}\cdot(1-q)^2}.
	$$
We conclude that $(1 - z^{-1})^2 \cdot {f^*}^\sharp \in \ell^1(\h)$.
	\end{proof}

\subsection{The inverse of $f=3+x+y+z$}
	\label{ss:3+x+y+z}

In this subsection we prove that $f=3+x+y+z$ is invertible in $\ell ^1(\h,\mathbb{R})$.

Using Taylor series expansion at $t=0$, one easily checks that
	\begin{equation}
\frac 1{(3+t)^k}= \sum_{n=0}^\infty \frac {(-1)^{n}} {3^{n+k}} {n+k-1\choose n} t^{n}.
	\end{equation}
Similarly,
	\begin{equation}
	\label{EulerTrans}
\frac 1{(3+t)^k}= \frac 1{(4+(t-1))^k}= \sum_{n=0}^\infty \frac {(-1)^{n}} {4^{n+k}} {n+k-1\choose n} (t-1)^{n}.
	\end{equation}
For $k\ge 1$, define $v^{(k)}\in \ell^\infty(\Z)$ as
	\begin{equation}
	\label{sol3z}
v^{(k)}_n=
	\begin{cases}
\frac {1} {3^{n+k}}{n+k-1\choose n}&\textup{if}\enspace  n\ge 0,
	\\
0&\textup{if}\enspace n<0.
	\end{cases}
	\end{equation}
One readily checks that $v^{(k)}=\sum_{n=0}^\infty v^{(k)}_n z^k$ is an inverse of $(3+z)^k$. By (\ref{EulerTrans}), one has
	$$
v^{(k)}=\sum_{n}^\infty v_{n}^{(k)} z^n=\sum_{n=0}^\infty \frac {(-1)^{n}} {4^{n+k}} {n+k-1\choose n} (z-1)^{n}.
	$$
Note that the $\ell^1$-norm of $ v^{(k)}$ satisfies that
	$$
\|v^{(k)}\|_1=\frac {1}{3^k}\sum_{n=0}^{\infty } \frac {1} {3^n}{n+k-1\choose n}= \frac {1}{3^k} \frac {1}{\bigl(1-\frac 13\bigr)^k}= \frac {1}{2^k}.
	$$
We are going to construct a formal inverse of $3+x+y+z$. One easily checks\footnote{The way to derive such expressions is as follows: $w=\frac 1{3+x+y+z} = \frac {1}{(3+z)}\cdot\frac {1} {1+\frac {x+y}{3+z}}=\sum_{M=0}^\infty (-1)^M(x+y)^M \cdot \frac {1}{(3+z)^{M+1}}=\sum_{M=0}^\infty (-1)^M(x+y)^M \cdot v^{(M+1)}$} that $f^\sharp=\sum_{M=0}^\infty (-1)^M(x+y)^M \cdot v^{(M+1)}$ is a formal inverse of $f$, where $v^{(M+1)}$ is given by (\ref{sol3z}). We have to show that $f^\sharp \in \ell ^1(\h,\mathbb{R})$.

By the $q$-binomial theorem,
	\begin{align*}
f^\sharp&=\sum_{M=0}^{\infty} \sum_{T=0}^M (-1)^M y^T x^{M-T} {M \brack T}_{q} v^{(M+1)} =\sum_{a,b=0}^{\infty} (-1)^{a+b} y^a x^b {a+b \brack b}_{q} v^{(a+b+1)}
	\end{align*}
where $q =z$. One needs relatively sharp estimates to prove the $\ell^1$-summability of $f^\sharp$. Note that
	$$
{a+b \brack b}_{q=z} v^{(a+b+1)}= \sum_{n=0}^\infty \frac {(-1)^{n}} {4^{n+a+b+1}} {n+a+b\choose n} (z-1)^{n}{a+b \brack b}_{q=z},
	$$
and hence
	\begin{equation}
	\label{estnormab}
\biggl\|{a+b \brack b}_{q=z} v^{(a+b+1)}\biggr\|_1\le \frac {1}{4^{a+b+1}} \sum_{n=0}^\infty {a+b+n\choose n} \frac {1}{4^n} \biggl\|(z-1)^n \cdot{a+b\brack b}_{q=z}\biggr\|_1.
	\end{equation}

Let us now proceed with estimating the norms
	$$
\biggl\|(z-1)^n\cdot{a+b\brack b}_{q=z} \biggr\|_1
	$$
For with $n=0,1$ we estimate as
	$$
\biggl\|{a+b\brack b}_{q=z} \biggr\|_1={a+b\choose b}, \quad \biggl\| (z-1)\cdot{a+b\brack b}_{q=z}\biggr\|_1\le 2 \biggl\| {a+b\brack b}_{q} \biggr\|_1=2{a+b\choose b}.
	$$
For $n\ge 2$ we proceed as follows:
	$$
\biggl\|(z-1)^n\cdot {a+b\brack b}_{q=z}\biggr\|_1= \biggl\|(z-1)^{n-2} \cdot (z-1)^2\cdot {a+b\brack b}_{q=z}\biggr\|_1 \le 2^{n-2} \biggl\|(z-1)^2\cdot {a+b\brack b}_{q=z}\biggr\|_1.
	$$
We can thus continue estimate in \eqref{estnormab} as follows
	\begin{align*}
N(a,b)&=\biggl\|{a+b \brack b}_{q=z}\cdot v^{(a+b+1)}\biggr\|_1\le \frac 1{4^{a+b+1}}{a+b\choose 0}{a+b\choose b}
	\\
&\qquad \qquad + \frac 2{4^{a+b+2}} {a+b+1\choose 1} {a+b\choose b}
	\\
&\qquad \qquad +\frac 1{4^{a+b+1}}\sum_{n=2}^\infty \frac 1{4^n}{a+b+n\choose n}\cdot 2^{n-2} \biggl\|(z-1)^2\cdot {a+b\brack b}_{q=z}\biggr\|_1
	\\
&\le \frac 1{4^{a+b+2}} {a+b\choose b} (a+b+1)
	\\
&\qquad \qquad + \biggl\|(z-1)^2\cdot{a+b\brack b}_{q=z}\biggr\|_1 \cdot \frac {{1}}{4^{a+b}}\sum_{n=2}^\infty {a+b+n\choose n} \left(\frac 12\right)^n
	\\
&\le \frac 1{4^{a+b+2}} {a+b\choose b} (a+b+1) + \frac {2}{2^{a+b+1}} \biggl\|(z-1)^2\cdot {a+b\brack b}_{q=z}\biggr\|_1.
	\end{align*}

Finally, putting all estimates together, we obtain that
	\begin{align*}
\|w\|_1&\le \sum_{a,b=0}^{\infty} \biggl\|{a+b \brack b}_{q} v^{(a+b+1)}\biggr\|_1
	\\
&\le \sum_{a,b=0}^\infty \frac {{a+b\choose b}}{4^{a+b+1}} (a+b+1) +2\sum_{a,b=0}^\infty \frac 1{2^{a+b+1}} \biggl\|(z-1)^2\cdot{a+b\brack b} _{q=z} \biggr\|_1.
	\end{align*}
The first sum is clearly finite. We also know from Theorem \ref{t:mainmulti} that the last sum is finite as well.

\subsection*{Remark}

We end the appendix with the following conjecture generalizing the results of \cite{Andrews} and \cite{BDL}. Let $\qbinom{n}{k}$ be the $q$-binomial coefficient for $n\ge 2k\ge 0$ with $\gcd(n,k)=1$. Then there exists an $n-k\leq m < n$, such that
	$$
B_{n,k} = \qbinom{n}{k} \frac {1-q}{1-q^{n}} \frac {(1-q)^2}{1-q^{m}} =\sum_{j} a_jq^j
	$$
is a polynomial with nonnegative integer coefficients. The complexity of our estimates would reduce drastically if the hypothesis would turn out to be true.

\end{document}